\documentclass[english]{amsart}

\usepackage{babel}
\usepackage{amsmath}
\usepackage{amsthm}
\usepackage{amssymb}
\usepackage[utf8]{inputenc}
\usepackage{graphicx}
\usepackage{float}
\usepackage[colorlinks = true, urlcolor = blue, linkcolor = blue, citecolor = blue, breaklinks = true]{hyperref}
\usepackage{breakcites}
\usepackage[dvipsnames]{xcolor}
\usepackage{tikz-cd}
\usetikzlibrary{spath3, intersections}
\usetikzlibrary{
  knots,
  hobby,
  decorations.pathreplacing,
  shapes.geometric,
  calc
}

\tikzset{
  knot diagram/every strand/.append style={
    ultra thick,
    red
  },
  show curve controls/.style={
    postaction=decorate,
    decoration={show path construction,
      curveto code={
        \draw [blue, dashed]
        (\tikzinputsegmentfirst) -- (\tikzinputsegmentsupporta)
        node [at end, draw, solid, red, inner sep=2pt]{};
        \draw [blue, dashed]
        (\tikzinputsegmentsupportb) -- (\tikzinputsegmentlast)
        node [at start, draw, solid, red, inner sep=2pt]{}
        node [at end, fill, blue, ellipse, inner sep=2pt]{}
        ;
      }
    }
  },
  show curve endpoints/.style={
    postaction=decorate,
    decoration={show path construction,
      curveto code={
        \node [fill, blue, ellipse, inner sep=2pt] at (\tikzinputsegmentlast) {}
        ;
      }
    }
  }
}
\usepackage{url}
\usepackage{dcolumn}
\usepackage{xcolor}
\usepackage{tikz}
\usetikzlibrary{decorations.markings}
\usepackage{mathtools}
\usepackage{nicematrix}
\usepackage{mathrsfs}
\usepackage{booktabs}
\usepackage{pdfpages}
\usepackage{bbm}
\usepackage{enumitem}
\usepackage{scalerel}
\usetikzlibrary{matrix,arrows,chains}
\newtheorem*{theorem*}{Theorem}
\newtheorem{prop}{Proposition}[section]
\newtheorem{thm}[prop]{Theorem}
\newtheorem*{thm*}{Theorem}
\newtheorem{cor}[prop]{Corollary}
\newtheorem{lem}[prop]{Lemma}

\theoremstyle{definition}
\newtheorem{rem}[prop]{Remark}
\newtheorem{de}[prop]{Definition}
\newtheorem{ex}[prop]{Example}

\newtheorem{con}[prop]{Construction}
\newtheorem{quest}[prop]{Question}
\newtheorem*{quest*}{Question}
\newtheorem*{cor*}{Corollary}

\usepackage[left=3cm,right=3cm]{geometry}
\usepackage{stackengine,wasysym}

\newcommand\reallywidetilde[1]{\ThisStyle{%
  \setbox0=\hbox{$\SavedStyle#1$}%
  \stackengine{-.1\LMpt}{$\SavedStyle#1$}{%
    \stretchto{\scaleto{\SavedStyle\mkern.2mu\AC}{.5150\wd0}}{.4\ht0}%
  }{O}{c}{F}{T}{S}%
}}

\makeatletter
\tikzcdset{
  open/.code     = {\tikzcdset{hook, circled};},
  closed/.code   = {\tikzcdset{hook, slashed};},
  open'/.code    = {\tikzcdset{hook', circled};},
  closed'/.code  = {\tikzcdset{hook', slashed};},
  circled/.code  = {\tikzcdset{markwith = {\draw (0,0) circle (.375ex);}};},
  slashed/.code  = {\tikzcdset{markwith = {\draw[-] (-.0ex,-.4ex) -- (.0ex,.4ex);}};},
  markwith/.code ={
    \pgfutil@ifundefined%
    {tikz@library@decorations.markings@loaded}%
    {\pgfutil@packageerror{tikz-cd}{You need to say %
      \string\usetikzlibrary{decorations.markings} to use arrows with markings}{}}{}%
    \pgfkeysalso{/tikz/postaction = {
      /tikz/decorate,
      /tikz/decoration={markings, mark = at position 0.5 with {#1}}}
    }
  },
}
\makeatother

\DeclareMathOperator{\N}{\mathbb{N}}
\DeclareMathOperator{\Z}{\mathbb{Z}}

\DeclareMathOperator{\R}{\mathbb{R}}
\DeclareMathOperator{\C}{\mathbb{C}}
\DeclareMathOperator{\A}{\mathbb{A}}
\DeclareMathOperator{\PP}{\mathbb{P}}
\renewcommand{\P}{\PP}
\DeclareMathOperator{\G}{\mathbb{G}}

\DeclareMathOperator{\id}{id}

\DeclareMathOperator{\pr}{pr}

\DeclareMathOperator{\wo}{\hspace{-1pt}\setminus}

\DeclareMathOperator{\komma}{\hspace{-3pt},}

\DeclareMathOperator{\ab}{ab}

\DeclareMathOperator{\Fun}{Fun}

\DeclareMathOperator{\Hom}{Hom}
\DeclareMathOperator{\End}{End}
\DeclareMathOperator{\Aut}{Aut}

\DeclareMathOperator{\op}{op}

\renewcommand{\ker}{\kerr}
\DeclareMathOperator{\kerr}{ker}

\renewcommand{\lim}{\limi}
\DeclareMathOperator{\limi}{lim}

\DeclareMathOperator{\Ab}{Ab}

\DeclareMathOperator{\PGL}{PGL}
\DeclareMathOperator{\SL}{SL}

\DeclareMathOperator{\alg}{alg}

\DeclareMathOperator{\Spc}{Spc}

\DeclareMathOperator{\SH}{SH}

\DeclareMathOperator{\Map}{Map}
\DeclareMathOperator{\Deck}{Deck}

\DeclareMathOperator{\Sm}{Sm}

\DeclareMathOperator{\Spec}{Spec}

\DeclareMathOperator{\Nis}{Nis}

\DeclareMathOperator{\Gm}{\mathbb{G}_m}

\DeclareMathOperator{\Shv}{Shv}
\DeclareMathOperator{\PShv}{PShv}

\DeclareMathOperator{\K}{K}
\DeclareMathOperator{\KK}{\underline{K}}

\DeclareMathOperator{\GW}{GW}

\DeclareMathOperator{\MM}{M}
\DeclareMathOperator{\QQ}{Q}

\DeclareMathOperator{\MW}{MW}

\DeclareMathOperator{\cofib}{cofib}

\DeclareMathOperator{\Th}{Th}

\graphicspath{ {./Bilder/} }
\tikzset{double line with arrow/.style args={#1,#2}{decorate,decoration={markings,%
mark=at position 0 with {\coordinate (ta-base-1) at (0,1pt);
\coordinate (ta-base-2) at (0,-1pt);},
mark=at position 1 with {\draw[#1] (ta-base-1) -- (0,1pt);
\draw[#2] (ta-base-2) -- (0,-1pt);
}}}}

\tikzset{%
    symbol/.style={%
        draw=none,
        every to/.append style={%
            edge node={node [sloped, allow upside down, auto=false]{$#1$}}}
    }
}

\tikzset{
  open/.style = {decoration = {markings, mark = at position 0.5 with { \node[transform shape, scale = .8] {$\circ$}; } }, postaction = {decorate} }
}

\begin{document}

\title{Algebraic Knots and their Universal \texorpdfstring{$\KK^{\MW}_2$}{KMW2}-Coverings}

\author[Wendt]{Matthias Wendt}
\address[Wendt]{
Fachgruppe Mathematik/Informatik, Bergische Universit\"{a}t Wuppertal, 
Gaußstraße 20, 42119 Wuppertal,
Germany}
\email{mwendt@uni-wuppertal.de}

\author[Wittich]{Thor Wittich}
\address[Wittich]{
Department of Mathematics, Universit\"{a}t Osnabr\"{u}ck, 
Albrechtstrasse 28a, 49076 Osnabr\"{u}ck,
Germany}
\email{thor.wittich@uni-osnabrueck.de}

\begin{abstract}
Over suitable base fields $k$ of characteristic not $2$, including algebraically closed ones, we construct universal abelian $\KK^{\MW}_2$-coverings for complements of closed embeddings $\A^1 \hookrightarrow \A^3$. Using these, we obtain a rectifiability invariant of such embeddings by lifting knot-theoretic ideas to algebraic geometry via motivic homotopy theory.
\end{abstract}
\maketitle

{ \hypersetup{hidelinks} \tableofcontents }

\section{Introduction}
Algebraic geometry deals with the study of geometric objects locally defined by polynomial equations. Independent of the setup or the choice of language, affine spaces are among the simplest and most central examples in (affine) algebraic geometry. Despite that, there are still quite some open problems about properties of affine spaces. One of the most famous of such problems, if not the most famous one, is the Zariski cancellation problem. It is often phrased for algebraically closed base fields, which is the most interesting case.
\begin{quest*}[Zariski cancellation problem]\label{ZCPIntro}
Let $k$ be a base field. Is there an affine scheme $X$ such that $X \times \A^1 \cong \A^{n+1}$, but $X \not\cong \A^n$?
\end{quest*}
In other words, the question is whether taking products with $\A^1$ is cancellative. As we will discuss in more detail in the next section, this problem has been solved completely for fields of positive characteristic, but remains wide open in characteristic zero. The former is mainly due to the seminal articles \cite{MR3148104} and \cite{MR3250286} of Gupta, in which for all $n \geq 3$, the existence of affine schemes $X$ as in the above question is proven. For the case  $n = 2$, the answer is negative for all base fields. This is due to Fujita, Miyanishi and Sugie in \cite{MR531454} and \cite{MR564667} in characteristic zero, and due to Russell \cite{MR615851} for perfect fields. Finally, Bhatwadekar and Gupta \cite{MR3368259} extended Russell's result to all fields.

\hfill

A related problem is Abhyankar's rectification problem \cite{AbMoh}, sometimes also called embedding problem or epimorphism problem, which asks about the existence of non-rectifiable embeddings between affine spaces. This is, once again, typically phrased over an algebraically closed base field, but can be stated more generally.

\begin{quest*}[Rectification problem]\label{RectifiabilityIntro}
Let $k$ be a base field. Given a closed embedding $i \colon \A^m \hookrightarrow \A^n$, does there exist an automorphism $\varphi \in \Aut(\A^n)$ such that $\varphi \circ i$ becomes a standard embedding?
\end{quest*}
Here a standard embedding is one given by inclusion as coordinate axes. If there is such a $\varphi$ for a given closed embedding $i \colon \A^m \hookrightarrow \A^n$, the embedding $i$ is called rectifiable, and if not, it is called non-rectifiable. The relation to the Zariski cancellation problem was established by Asanuma \cite{MR1720185}. There it is proven that any non-rectifiable embedding gives rise to a potential counterexample for Zariski cancellation, i.e., from any non-rectifiable embedding $i \colon \A^m \hookrightarrow \A^n$ we can obtain an affine scheme $X$ (of dimension $n+1$), which satisfies $X \times \A^m \cong \A^{\dim(X) + m}$, and which is complicated enough to have a chance at not being an affine space itself. This is also how Gupta's counterexamples to Zariski cancellation in positive characteristic were produced in the aforementioned seminal articles \cite{MR3148104} and \cite{MR3250286}. For more details, we refer to the discussion around Theorem \ref{Asanuma}.

As explained just now, the existence of non-rectifiable embeddings is an interesting question, but there is a major problem. There are no computable invariants for detecting rectifiability. In a series of (at least) two articles, to which this one belongs, we remedy this. The main result of this article lays the foundation of our ``motivic knot theory'' and states the following: 
\begin{thm*}[Construction \ref{Construction of Milnor--Witt K_2 coverings}, Lemma \ref{Unknot is Univ Covering}, Theorem \ref{Coverings are Universal}, Lemma \ref{Alexander mods detect alg knot types} and Corollary \ref{Rectifiability invariant}]
Let $k$ be an algebraically closed base field of characteristic not $2$. The complement $X = \A^3 \setminus i(\A^1)$ of any closed embedding $i \colon \A^1 \hookrightarrow \A^3$ admits a universal abelian $\A^1$-covering $p \colon \widetilde{X} \rightarrow X$ with the following properties:
\begin{enumerate}
\item[(i)] The fibers of $p$ are given by the discrete motivic space $\KK^{\MW}_2$.
\item[(ii)] If the embedding $i$ is given by inclusion of coordinate axes, then $p \colon \widetilde{X} \rightarrow X$ is universal.
\item[(iii)] If there is an automorphism $\varphi \in \Aut(\A^3)$ which maps the embedding $i$ to an embedding $i'$, then the $\A^1$-homology sheaves $\underline{H}_1^{\A^1}\!(\widetilde{X})$ and  $\underline{H}_1^{\A^1}(\widetilde{X'})$ agree, where $p' \colon \widetilde{X'} \rightarrow X'$ is the universal abelian $\A^1$-covering associated with $i'$.
\end{enumerate}
In particular, if $\underline{H}_1^{\A^1}\!(\widetilde{X})$ is nontrivial, then $i$ is non-rectifiable.
\end{thm*}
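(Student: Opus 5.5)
The plan is to build everything from the $\A^1$-fundamental sheaf of groups of $X=\A^3\setminus i(\A^1)$ and its abelianization. I will use Morel's theory: $\A^1$-coverings of a pointed $\A^1$-connected space are classified by strongly $\A^1$-invariant quotients of $\pi_1^{\A^1}$. In particular, the universal abelian $\A^1$-covering corresponds to the abelianization
\[
\pi_1^{\A^1}(X)\to \pi_1^{\A^1}(X)^{\ab}\cong \underline{H}_1^{\A^1}(X),
\]
with the second isomorphism given by the $\A^1$-Hurewicz theorem.

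\textbf{Step 1: compute $\underline{H}_1^{\A^1}(X)$.} I would use the Gysin/purity cofiber sequence
\[
X\to \A^3\to \Th(N_i).
\]
The normal bundle $N_i$ is a rank $2$ bundle on $\A^1$, hence trivial. So $\Th(N_i)\simeq (\P^1)^{\wedge 2}\wedge \A^1_+\simeq S^{2,2}$, and $\Sigma X\simeq S^{3,2}$ up to $\A^1$-equivalence, i.e. $X$ is $\A^1$-equivalent after one suspension to $\A^2\setminus 0$ suspended. The long exact sequence in $\A^1$-homology then gives $\underline{H}_1^{\A^1}(X)\cong \underline{H}_2^{\A^1}(\Sigma X)\cong \KK^{\MW}_2$, using Morel's computation of the first nonvanishing $\A^1$-homology of motivic spheres. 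This uses the base field hypotheses, namely characteristic not $2$ and the perfectness needed for Morel's results.

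\textbf{Step 2: construct the covering and prove (i).} Applying Morel's classification to $\pi_1^{\A^1}(X)\to \KK^{\MW}_2$ produces $p\colon \widetilde{X}\to X$ with fiber the discrete motivic space $\KK^{\MW}_2$. Concretely, $p$ is the homotopy fiber of the map $X\to B\KK^{\MW}_2=K(\KK^{\MW}_2,1)$ classifying the Hurewicz class; this map is $\A^1$-invariant because $\KK^{\MW}_2$ is strictly $\A^1$-invariant. This is (i), and universality among abelian $\A^1$-coverings is the defining property of abelianization.

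\textbf{Step 3: the standard embedding, proving (ii).} For $i$ the coordinate axis, $X\cong(\A^2\setminus 0)\times\A^1\simeq \A^2\setminus 0$. Morel shows that $\A^2\setminus 0$ is $\A^1$-simply connected when $n\ge 2$ fails; rather, $\pi_1^{\A^1}(\A^2\setminus 0)=\KK^{\MW}_2$ is already abelian. Hence the abelian covering is the universal one, and $\widetilde{X}$ is $\A^1$-simply connected.

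\textbf{Step 4: invariance, proving (iii), and the corollary.} An automorphism $\varphi$ restricts to an isomorphism $X\cong X'$ of schemes. The universal property of $p$ transports it to an isomorphism $\widetilde{X}\simeq\widetilde{X'}$ over this map, giving an isomorphism of $\underline{H}_1^{\A^1}$; uniqueness of universal coverings up to unique isomorphism makes this well defined. For the final claim: if $i$ is rectifiable, then by (iii) and (ii) the sheaf $\underline{H}_1^{\A^1}(\widetilde{X})$ agrees with that of an $\A^1$-simply connected space. By Hurewicz that sheaf vanishes, so nontriviality forces non-rectifiability.

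\textbf{Main obstacle.} I expect the hardest point to be Step 1 together with making Step 2 rigorous. Two things need care there: showing that $X$ is $\A^1$-connected, which follows from codimension-$2$ removal from $\A^3$ since $\pi_0^{\A^1}$ is unchanged; and showing that the covering really exists as a space whose fiber is $\KK^{\MW}_2$, rather than only as a stable statement. Here I would use the unstable purity/Hurewicz comparison, valid because $X$ is $\A^1$-$0$-connected and the relevant connectivity bounds hold. I would also check that algebraic closedness is only needed to compare with earlier results in the paper.
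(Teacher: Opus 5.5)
\textbf{Gap: the Hurewicz map is not known to be the abelianization.} Your proposal rests on the identification $\underline{\pi}_1^{\A^1}(X)^{\ab}\cong\underline{H}_1^{\A^1}(X)$ ``by the $\A^1$-Hurewicz theorem'', and this is not available. In degree $1$, Morel only proves the weak form: $h\colon\underline{\pi}_1^{\A^1}(X)\to\underline{H}_1^{\A^1}(X)$ is the initial morphism to a strictly $\A^1$-invariant sheaf. It is not known that $h$ is an epimorphism, nor that $\ker(h)$ is the commutator subsheaf. The obstruction is that quotients of strongly $\A^1$-invariant sheaves of groups, such as $\underline{\pi}_1^{\A^1}(X)/[\underline{\pi}_1^{\A^1}(X),\underline{\pi}_1^{\A^1}(X)]$, are not known to be strongly $\A^1$-invariant. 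So Morel's covering theory cannot even be applied to the abelianization. This is exactly why the paper \emph{defines} a universal abelian covering by the condition $\underline{\pi}_1^{\A^1}(\widetilde X)=\ker(h)$, and why $k$ is assumed algebraically closed. Your closing guess that algebraic closedness is only needed to compare with earlier results is therefore wrong.

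\textbf{What your Step 2 does give, and what is missing.} Your construction takes $\widetilde X$ to be the fibre of the map $X\to K(\KK^{\MW}_2,1)$ classifying $\underline{H}_1^{\A^1}(X)\cong\KK^{\MW}_2$. This is actually a cleaner alternative to the paper's pullback along $(f,g)\colon X\to\A^2\setminus\{0\}$. The classifying map induces $h$ on $\underline{\pi}_1^{\A^1}$, so the long exact sequence gives $\underline{\pi}_1^{\A^1}(\widetilde X)=\ker(h)$ directly. You then need neither the paper's argument via $\End(\KK^{\MW}_2)\cong K^{\MW}_0(k)$ nor complete intersections, and (iii) becomes formal. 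However, the same sequence gives $\underline{\pi}_0^{\A^1}(\widetilde X)\cong\KK^{\MW}_2/\mathrm{im}(h)$. Hence $\widetilde X$ is $\A^1$-connected (i.e.\ $p$ is a genuine abelian covering with deck sheaf $\KK^{\MW}_2$, as the statement asserts) if and only if $h$ is an epimorphism.

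Nothing in your plan proves this, and it is the real content behind the hypothesis on $k$. The paper supplies it in Appendix B, in three steps:
\begin{enumerate}
\item[(a)] Over $k=\bar k$, choose a plane $H$ meeting $i(\A^1)$ transversally in $k$-rational points $p_1,\dots,p_n$.
\item[(b)] Use Asok--Doran excision to get surjectivity of the Hurewicz map for $H\setminus\{p_1,\dots,p_n\}$ onto $(\KK^{\MW}_2)^{\oplus n}$.
\item[(c)] Identify the induced map to $\underline{H}_1^{\A^1}(X)\cong\KK^{\MW}_2$ with a sum of local $\A^1$-degrees of $(f,g)|_H$ at the $p_i$. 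Each degree is a unit $\langle\det J^{-1}\rangle\in\GW(k)$, by transversality and the Eisenbud--Khimshiashvili--Levine formula.
\end{enumerate}

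\textbf{Minor slip.} In your Step 1, $\Th(\mathcal{N}_{\A^1/\A^3})\simeq(\P^1)^{\wedge 2}\simeq S^{4,2}$, so $\Sigma X\simeq S^{4,2}$, not $S^{2,2}$ or $S^{3,2}$.
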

The idea behind this is to treat embeddings $\A^1 \hookrightarrow \A^3$ like knots. Under this perspective, being non-rectifiable is the same as being a non-trivial algebraic knot in the sense of the rectification problem, so that we need to construct invariants of algebraic knots to detect rectifiability. For this we make use of motivic homotopy theory, which enables $\A^1$-algebraic topological tools for us. In particular, the sheaf $\underline{H}_1^{\A^1}\!(\widetilde{X})$ should be viewed as a motivic version of the classical knot invariant known as Alexander module. The basic idea behind this is not completely novel. Over $k = \R$, Shastri \cite{MR1145717} used knot theoretic ideas to show that the trefoil knot gives rise to a non-rectifiable embedding $\A^1 \hookrightarrow \A^3$. In some sense, this paper can be seen as an extension of Shastri's idea to more general base fields, where usual knot--theoretic notions are not meaningful, or even definable.

As will become apparent from Construction \ref{Construction of Milnor--Witt K_2 coverings}, explicit computations require us to be able to write a given embedding $\A^1 \hookrightarrow \A^3$ as a complete intersection, which is always possible by Theorem \ref{Knots are Complete Intersections}. Achieving this in practice is a highly non-trivial matter, as we will discuss just before Question \ref{Rectifiability}. By now we are able to write the so-called algebraic trefoil knot (see Example \ref{Knot picture}) as a complete intersection and are working on a follow-up article \cite{WW2} in which we want to use this for explicit computations.

\hfill

The proof of the above main result makes crucial use of the Hurewicz map $h\colon\underline{\pi}_1^{\A^1}\!(X)\to\underline{H}_1^{\A^1}\!(X)$ being an epimorphism for algebraic knot complements $X$. In \cite{MR4442407}, Choudhury and Hogadi gave a proof of this statement for any space $X$ over $k$, but unfortunately their argument is known to be incomplete; see, for instance, \cite{zbMATH08029975}. The main issue for proving this is that it is not known that quotients of strongly $\A^1$-invariant sheaves of groups remain strongly $\A^1$-invariant. 

To remedy this, we prove in Appendix \ref{Appendix B} that the Hurewicz map for knot complements is indeed an epimorphism  under suitable assumptions.
\begin{thm*}[Theorem \ref{Hurewicz}, Theorem \ref{thm:hurewicz-knot-complement}, Remark \ref{rem:assumptions for hurewicz}]
Let $k$ be an algebraically closed base field of characteristic not $2$ (or $k = \R$) and let $i \colon \A^1 \hookrightarrow \A^3$ be a closed embedding (defined by Chebyshev polynomials) with complement $X = \A^3 \setminus i(\A^1)$. Then the Hurewicz map $h\colon\underline{\pi}_1^{\A^1}\!(X)\to\underline{H}_1^{\A^1}\!(X)$ is an epimorphism.
\end{thm*}
Our main reason for pushing the proof of this result into the appendix is that we felt it distracts the reader from the other main result and its proof in Section \ref{Section 4}. As essentially no non-trivial case of the Hurewicz theorem is known, we consider this result interesting by itself.

\hfill

This is not the first variant of ``motivic knot theory''. In the article \cite{Clementine}, a version of motivic linking numbers are considered. These two “motivic knot theories” both “motivify” certain aspects of classical knot theory, but not the general theory. 

\subsection*{Acknowledgement} The authors would like to thank Aravind Asok for thinking about the first version of these ideas together with the first-named author at the Mittag-Leffler institute many years ago. Furthermore, TW wishes to thank Marco Giustetto and Jesse Pajwani for useful discussions. 

\section{Algebraic Knots}
In topology, a knot is an embedding $S^1 \hookrightarrow S^3$ and a knot type is an isotopy class of such embeddings. Given a knot 
$$i \colon \R^1 \cup \, \lbrace \infty \rbrace \cong S^1 \hookrightarrow S^3 \cong \R^3 \cup \, \lbrace \infty \rbrace,$$
where we consider both of the spheres as one-point compactifications of Euclidean spaces, we can always choose the point at infinity on the right hand side to be the image of the one on the left hand side. This allows us to restrict away from the points at infinity, so that we obtain an embedding $i \colon \R \hookrightarrow \R^3$. Note that, as this embedding is obtained from an embedding between compact Hausdorff spaces, it is proper, i.e., it has the property that preimages of compact subsets are again compact. Therefore it extends to a map on one-point compactifications, so that we can get our original knot $i \colon S^1 \hookrightarrow S^3$ back. This is the reason why proper embeddings $i \colon \R \hookrightarrow \R^3$ are also called proper knots. To summarize, we have a one-to-one correspondence
\begin{center}
\begin{tikzcd}
\lbrace \text{Knots } S^1 \hookrightarrow S^3 \rbrace \arrow[rr, shift left=5pt, "\text{decompactify}", "1:1"'] & & \lbrace \text{Proper knots } \R^1 \hookrightarrow \R^3 \rbrace. \arrow[ll, shift left=5pt, "\text{compactify}"]
\end{tikzcd}
\end{center}
There is also a suitable notion of isotopy classes for proper knots, which we do not want to recall, but which allows us to extend this correspondence to knot types. For more on this, see Section 2 of \cite{MR1145717}.
\begin{rem}
If one just takes the usual notion of isotopy, there is only one proper knot type. This is often done in the literature, e.g., when proper knots in other manifolds than $\R^3$ are studied. We want to highlight this to avoid confusion.
\end{rem}
\begin{rem}
Note that on the level of knot complements, there is no difference between a knot and its associated proper knot. So without any additional work, we already know quite some invariants of proper knots.
\end{rem}

The reason that we consider proper knots, as opposed to the usual notion of knots, is the following result of Shastri \cite{MR1145717}.
\begin{thm}
Every proper knot type is algebraic, that is, every proper knot is equivalent to one given by polynomial maps.
\end{thm}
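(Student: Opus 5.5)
Every proper knot type is represented by a polynomial embedding; I would prove this by combining a tame polygonal/smooth representative with Weierstrass-type polynomial approximation, keeping track of properness at infinity. The plan follows Shastri's argument.

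\emph{Step 1: reduction to a smooth, nicely behaved representative.} Up to isotopy of proper knots, any proper knot $f\colon \R \hookrightarrow \R^3$ can be taken to be smooth. Moreover, outside a large compact interval $[-R,R]$ it can be taken to coincide with a fixed standard ray behaviour, say $f(t) = (t,0,0)$ for $|t| \geq R$ after a suitable normalization near infinity. I would also arrange a generic projection: the projection to the $(x,y)$-plane is an immersion with finitely many transverse double points. At each double point the $z$-coordinate distinguishes over- and under-strand. Then the knot type is determined by this decorated diagram, and any proper embedding whose diagram is isotopic (as a planar immersion with the same crossing data) represents the same proper knot type.

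\emph{Step 2: polynomial approximation of the planar part.} Choose polynomials $p(t),q(t)$ approximating the first two coordinates in $C^1$ on $[-R',R']$ for some $R'>R$, with prescribed leading behaviour. For instance, take $p(t) = t\cdot(\text{something positive})$ of odd degree so that $p$ is strictly monotone for $|t|\geq R'$. I would use Weierstrass approximation of derivatives and integrate, plus a correction to control the tails. $C^1$-closeness on the compact part keeps the map $(p,q)$ an immersion with the same transverse double points (small perturbation stability). Monotonicity and growth of $p$ on the tails guarantee no new crossings there and that the map is proper.

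\emph{Step 3: the height function.} For each crossing we have parameter pairs $(s_j,t_j)$. We need a polynomial $r(t)$ with $r(s_j) > r(t_j)$ or the reverse, as prescribed, and such that $t\mapsto (p,q,r)$ is injective. Injectivity only fails at the double points of $(p,q)$, so Lagrange interpolation of prescribed values at the finitely many $s_j,t_j$ suffices. Then the straight-line homotopy in the $z$-coordinate between the original (after Step 1) and the polynomial knot, combined with the $C^1$-small planar homotopy, is an isotopy through proper embeddings.

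The main obstacle is Step 2: controlling behaviour at infinity so that the approximation is proper and introduces no crossings outside the compact region while the homotopy remains an isotopy of proper knots in the relevant (non-trivial) sense. I would first try forcing the first coordinate to have a large positive-leading odd-degree term dominating on the tails.
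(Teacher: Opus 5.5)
The paper does not prove this statement; it quotes it from Shastri \cite{MR1145717}. Your outline (a long knot straight outside a compact set with a regular projection, $C^1$-approximation of the two planar coordinates, interpolation of the height) is the standard strategy. However, the proposal leaves open the one step that needs real work, namely control at infinity, and the fix you suggest does not work as stated.

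\textbf{Tail control of $p$.} Adding a dominant odd-degree term $c\,t^{2N+1}$ to a Weierstrass approximant fails in either regime. If $c$ is large, it destroys the $C^1$-closeness on $[-R',R']$. If $c$ is small, it dominates only for $|t|\ge T(c)\gg R'$, and on $[R',T(c)]$ nothing controls $p'$. An approximant chosen on $[-R',R']$ can oscillate there, so a tail may re-enter the disc and create new crossings with the compact part.

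What is missing is a lemma of the following form: if $f_1(t)=t$ for $|t|\ge R$, there is a polynomial $p$ that is $\epsilon$-close to $f_1$ in $C^1$ on $[-R',R']$ and satisfies $p'>0$ for $|t|\ge R$. One proof runs as follows.
\begin{enumerate}
\item[(a)] Approximate $f_1'$ within $\epsilon$ by a polynomial $P$ on a strictly larger interval $[-L,L]$. Then $P\ge 1-\epsilon>0$ on $R\le |t|\le L$.
\item[(b)] Put $p'=P+\epsilon (t/R')^{2N}$. The correction is at most $\epsilon$ on $[-R',R']$.
\item[(c)] Chebyshev's bound gives $|P(t)|\le M(2|t|/L)^{\deg P}$ for $|t|\ge L$, where $M=\max_{[-L,L]}|P|$. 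Since $(L/R')^{2N}\to\infty$, the correction dominates $P$ on $|t|\ge L$ once $N$ is large.
\item[(d)] Integrate to obtain $p$.
\end{enumerate}
Choose $R$ so that $|f(t)|<R$ for $|t|<R$. Then for $|t|\ge R'$ you get $|p(t)|>\max_{|s|\le R}|p(s)|$, and $p$ is strictly monotone on $[R,\infty)$ and on $(-\infty,-R]$. Hence no point with $|t|\ge R'$ is a double point of $(p,q)$, and $q$ needs no tail control at all.

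\textbf{The equivalence criterion.} Your conclusion ``an isotopy through proper embeddings'' is not a valid criterion. As the paper remarks, under that notion there is only one proper knot type: one can slide the knotted part off to infinity along a tail through smooth proper embeddings.

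What you need is uniform control at the ends, and your construction supplies it once you say so. There is a fixed $A$ such that, at every stage of both homotopies, the curve meets $\{|x|\ge A\}$ in two graphs over the $x$-axis. This holds because the first coordinate is $(1-\sigma)f_1+\sigma p$ or $p$, which is strictly increasing on $|t|\ge R$.

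Now let $(G_\sigma,H_\sigma)$ describe these graphs, and let $\lambda$ be a cutoff with $\lambda=0$ on $|x|\le A$ and $\lambda=1$ on $|x|\ge A+1$. The fibrewise shear
\[
(x,y,z)\mapsto\bigl(x,\,y-\lambda(x)G_\sigma(x),\,z-\lambda(x)H_\sigma(x)\bigr)
\]
is a diffeomorphism of $\R^3$. It extends to a homeomorphism of $S^3$, which is isotopic to the identity by replacing $\lambda$ with $\tau\lambda$. It turns your family into an isotopy of long knots supported in a fixed compact set. Isotopy extension then applies, and this gives equivalence of the compactified knots in $S^3$.

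\textbf{Minor points.} Run the two homotopies sequentially: first the planar one, keeping height $f_3$, then the straight line in $z$ with $(p,q)$ fixed. Interpolate at the crossing parameters of $(p,q)$, not at the original $(s_j,t_j)$. With these additions your argument goes through.
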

As it turns out, Chebyshev polynomials of the first kind give rise to many such knots, as was shown in \cite{freudenburg:freudenburg}. These are relevant examples for us, which is why we will address those more in Appendix \ref{Appendix A}. For now we will consider a few explicit examples, which are taken from \cite{MR1145717}:

\begin{ex}\label{Knot picture}
The unknot, trefoil knot and figure-eight knot
\begin{center}
	\centerline{\includegraphics[scale=1]{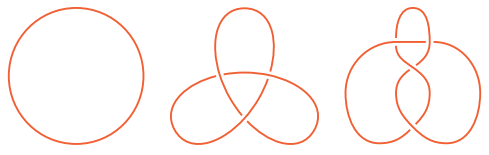}}    
\end{center}
can be represented by the three polynomial maps
$$t \mapsto (t,0,0), \,\,\, t \mapsto (t^3 - 3t, t^4 - 4t^2, t^5 - 10t), \,\,\,\text{and} \,\,\,t \mapsto (t^3 - 3t, t(t^2-1)(t^2-4), t^7 - 42t),$$
respectively.
\end{ex}

In light of the above theorem, we are able to study knots within the world of algebraic geometry. As maps between affine schemes are always quasi-compact (proper in the topological terminology), the basic object of study will just be the following.
\begin{de}
An algebraic knot is a closed embedding $i \colon \A^1 \hookrightarrow \A^3$, that is, a closed immersion with $i(\A^1) \cong \A^1$.
\end{de}
Embeddings into affine spaces have, of course, been studied extensively. For instance, the following is known by a culmination of results of Ferrand--Szpiro from \cite{MR379517} and \cite{MR572085}, Boraty\'{n}ski \cite{MR511453}, Kumar \cite{MR472851} and Quillen--Suslin from \cite{MR427303} and \cite{MR469905}.
\begin{thm}
Let $k$ be a base field and let $i \colon X \hookrightarrow \A^n$ be an embedding of an $m$-dimensional affine scheme such that the normal bundle $\mathcal{N}_{X/\!\A^n}$ is trivial. If $i(X)$ is a local complete intersection, then
\begin{enumerate}\label{Knots are Complete Intersections}
\item[(i)] $i(X)$ is a set-theoretic complete intersection.
\item[(ii)] $i(X)$ is a complete intersection, if $n-m = 2$, or if $n \geq 2(m+1)$.
\end{enumerate}
In particular, algebraic knots are complete intersections.
\end{thm}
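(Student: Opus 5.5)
We take parts (i) and (ii) as the combination of the cited results of Ferrand--Szpiro, Boraty\'{n}ski, Kumar and Quillen--Suslin, and only deduce the final assertion from them. We apply Theorem \ref{Knots are Complete Intersections} with $X = \A^1$, $m = 1$ and $n = 3$. Since $n - m = 2$, part (ii) applies once both hypotheses hold: $i(\A^1)$ must be a local complete intersection in $\A^3$, and the normal bundle $\mathcal{N}_{\A^1/\A^3}$ must be trivial.

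\emph{Local complete intersection.} Since $i$ is a closed immersion with $i(\A^1)\cong\A^1$, the subscheme $Z = i(\A^1)$ is smooth of dimension $1$ over $k$. The ambient space $\A^3$ is smooth of dimension $3$. A closed immersion of smooth $k$-schemes is a regular immersion: at each point $z\in Z$, the local ring $\mathcal{O}_{Z,z}$ is a quotient of the regular local ring $\mathcal{O}_{\A^3,z}$ by an ideal $I_z$, and $\mathcal{O}_{Z,z}$ is itself regular. A regular quotient of a regular local ring is cut out by part of a regular system of parameters, here of length $\dim \mathcal{O}_{\A^3,z} - \dim \mathcal{O}_{Z,z} = 2$. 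Hence $I_z$ is generated by a regular sequence of length $2$, and $Z$ is a local complete intersection of codimension $2$.

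\emph{Triviality of the normal bundle.} Because $Z$ is a local complete intersection, the conormal sheaf $I/I^2$ is a locally free $\mathcal{O}_Z$-module of rank $2$. Its dual $\mathcal{N}_{Z/\A^3}$ therefore corresponds to a finitely generated projective module over $\mathcal{O}(Z)\cong k[t]$. Since $k[t]$ is a principal ideal domain, every such module is free; alternatively one can invoke Quillen--Suslin. So $\mathcal{N}_{Z/\A^3}$ is trivial.

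With both hypotheses verified and $n - m = 2$, part (ii) shows that $i(\A^1)$ is a complete intersection in $\A^3$, cut out by two polynomials $f,g\in k[x,y,z]$. Once (ii) is available there is no real obstacle in this deduction. The genuinely deep content lies in (ii) itself, specifically the passage from set-theoretic to ideal-theoretic complete intersection in codimension $2$. One route is the Serre correspondence: trivial normal bundle plus vanishing of the relevant obstruction on the affine space yields a rank $2$ bundle with a section vanishing exactly on $Z$, and this bundle is free by Quillen--Suslin. We do not reprove this but take it from the cited literature.
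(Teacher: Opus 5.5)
Your proposal is correct and follows essentially the same route as the paper: parts (i) and (ii) are taken from Ferrand--Szpiro, Boraty\'{n}ski, Forster and Kumar, and the final assertion follows by applying (ii) with $n-m=2$, after checking that the normal bundle $\mathcal{N}_{\A^1/\A^3}$ is trivial (by Quillen--Suslin, or simply because $k[t]$ is a principal ideal domain) and that $i(\A^1)$ is a local complete intersection. Your justification of the local complete intersection property is a bit more careful than the paper's one-line remark that ``$\A^1$ is regular and hence a local complete intersection,'' since you use that a closed immersion of regular schemes is a regular immersion.
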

\begin{proof}
Part (i) was proven for $n = 3$ by Ferrand--Szpiro in \cite{MR379517} and \cite{MR572085}, and then generalized by Boraty\'{n}ski \cite{MR511453}. The first part of (ii) follows from Boraty\'{n}ski's proof of (i), as also observed by Forster in \cite{MR775875}, and the second part of (ii) is due to Kumar \cite{MR472851}. 

For the in particular part, let $i \colon \A^1 \hookrightarrow  \A^3$ be an algebraic knot. By the Quillen--Suslin theorem (\cite{MR427303} and \cite{MR469905}), all vector bundles on affine spaces are trivial, so that $\mathcal{N}_{\A^1\!/\A^3}$ is trivial. As the codimension is $2$, it suffices to show that $i(\A^1) \cong \A^1$ is a local complete intersection by part (ii). But this is clear as $\A^1$ is regular and hence a local complete intersection.
\end{proof} 
For those readers with little algebro-geometric background, let us clarify what this result gives us. The statement that an algebraic knot is a complete intersection means that $i(\A^1)$ can be described as the common roots of two polynomials, i.e., there exist $f,g \in k[x,y,z]$ with 
$$i(\A^1) = V(f) \cap V(g) = V(f,g).$$
This will become crucial for constructing certain covering maps and comparison maps later on; see, for instance, Construction \ref{Construction of Milnor--Witt K_2 coverings}. 

For a concrete knot, such as the non-trivial ones from Example \ref{Knot picture}, it is truly difficult to find such two polynomial generators. It seems that the usual computer algebra tools and packages of Macaulay2, Singular, etc. are not optimized for this task and only manage to produce generating sets of size 3 at best. Despite this, we have by now managed to find explicit generators describing the algebraic trefoil knot as a complete intersection and will use these for computations in our ongoing follow-up article \cite{WW2}.

\hfill

One central question of Abhyankar \cite{AbMoh} from affine algebraic geometry concerns the so-called rectifiability of closed embedding $i \colon \A^m \hookrightarrow \A^n$. To state the question, let us call such an embedding standard, if it is given by including $\A^m$ as coordinate axes.
\begin{quest}\label{Rectifiability}
Let $k$ be a base field of characteristic $p \geq 0$. Given a closed embedding $i \colon \A^m \hookrightarrow \A^n$, does there exist an automorphism $\varphi \in \Aut(\A^n)$ such that $\varphi \circ i$ becomes a standard embedding?
\end{quest}

If so, the embedding $i \colon \A^m \hookrightarrow \A^n$ is called rectifiable. While the main interest in this question concerns the case $k = k^{\alg}$, let us include a few further examples in the following list of prominent cases:
\begin{enumerate}
\item[$\bullet$] If $m = 1$ and $n = 2$, there is a full answer because the automorphism group $\Aut(\A^2)$ is understood well enough:
\begin{enumerate}
\item[$\bullet$] If $p = 0$, then the answer is yes. This is a result due to Abhyankar and Moh \cite{MR379502} for $k = \C$, and Suzuki \cite{MR338423} for the generalization to arbitrary fields of characteristic $0$.
\item[$\bullet$] If $p > 0$, then the answer is no, in general. Already Segre and Nagata provided examples of such embeddings in \cite{MR95175} and \cite{MR337962}, respectively. To give a few more details, Nagata's example is cut out by the polynomial $f = x^{p^2} + y + y^{qp}$, where $q$ can be any prime other than $p$, and can be found on page 39 of loc.\ cit.
\end{enumerate}
\item[$\bullet$] If $m = 1$ and $n = 3$, Shastri showed in \cite{MR1145717} that the algebraic trefoil knot from Example \ref{Knot picture} is non-rectifiable over $\R$, and conjectured that the same is true over $\C$. 
\item[$\bullet$] If $k = k^{\alg}$ and $n \geq 2(m+1)$, the answer is yes independently of $p$. This can be found in Srinivas' article \cite{MR1087241}, but is, also there, attributed to Nori. In particular, if $m = 1$, then the answer is yes for all $n \geq 4$.

\item[$\bullet$] If $n = m+1$, then this is known as Abhyankar--Sathaye's embedding problem. The only understood case is the one from above, but the expected answer is no even in characteristic $0$.
\end{enumerate}

Let us focus a bit more on the case $m = 1$. If we were to call an embedding $i \colon \A^1 \hookrightarrow \A^n$ an algebraic knot of codimension $n - 1$, then some of the results mentioned in the table could be rephrased as the following slogan:
\begin{center}
 ``There are no non-trivial algebraic knots over $\C$ outside of codimension $2$.''
\end{center}
This is exactly analogous to the topological situation with the main difference being that it is not difficult at all to show the existence of non-trivial (topological) knots in codimension $2$. 

\hfill

As it turns out, Question \ref{Rectifiability} is related to another central problem of affine algebraic geometry:
\begin{quest}[Zariski cancellation problem]
Let $k$ be a base field of characteristic $p \geq 0$. Is there an affine scheme $X$ such that $X \times \A^1 \cong \A^{n+1}$, but $X \not\cong \A^n$?
\end{quest}
Once again, this question is mostly considered for algebraically closed fields $k$. It is certainly difficult to imagine a scheme as in the above question, which is why the expected answer was no. As it turns out, if $p > 0$, the answer is yes in all dimensions $n \geq 3$, as proven by Gupta in \cite{MR3148104} and \cite{MR3250286}:
\begin{thm}[{\cite[Theorem 3.6]{MR3250286}}]\label{Gupta}
Let $k$ be a base field of characteristic $p > 0$. For all $n \geq 3$ there exists an affine scheme $X$ of dimension $n$ such that $X \times \A^1 \cong \A^{n+1}$ and $X \not \cong \A^n$.
\end{thm}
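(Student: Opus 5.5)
We follow Gupta's strategy, as the paper cites this result from \cite{MR3148104} and \cite{MR3250286}: use Asanuma's construction from a non-rectifiable line in $\A^2$ in characteristic $p>0$, then separate the resulting variety from affine space by an invariant of $\G_a$-actions. For $n=3$ we consider the hypersurface
$$X = V\bigl(x^m y + z^{p^e} + t + t^{sp}\bigr) \subset \A^4 = \Spec k[x,y,z,t], \qquad m \ge 2,\ p^e \nmid sp,\ sp \nmid p^e.$$
Its fibre over $x=0$ is $\A^1_y \times V(z^{p^e}+t+t^{sp}) \subset \A^1_y\times\A^2$. By the Segre--Nagata type examples recalled after Question \ref{Rectifiability}, the curve $V(z^{p^e}+t+t^{sp})$ is isomorphic to $\A^1$ but non-rectifiable in $\A^2$. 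For general $n\ge 3$ we use the multi-variable version
$$x_1^{r_1}\cdots x_m^{r_m}\, y = F(z,t), \qquad r_i \ge 2,$$
with $m=n-2$ and $F$ the same non-rectifiable line. Such a hypersurface has dimension $m+2=n$.

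\textbf{Step 1: $X\times\A^1\cong\A^{n+1}$.} This is Asanuma's theorem (Theorem \ref{Asanuma}). Since $F$ defines a closed embedding $\A^1\hookrightarrow\A^2$, it becomes rectifiable after adding one variable: there is an automorphism of $\A^3$ taking $V(F)\times\A^1$ to a coordinate hyperplane. We lift this to the coordinate ring $k[x_i,y,z,t,w]/(x^{\underline r}y - F)$ and check that it produces $n+1$ algebraically independent generators. Concretely, Asanuma's criterion says that $B^{[1]}$ is a polynomial ring precisely because $k[z,t]/(F)\cong k^{[1]}$. Given the cited statement, this step should be a direct invocation.

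\textbf{Step 2: $X\not\cong\A^n$.} We compute the Makar-Limanov invariant $\mathrm{ML}(B)$, the intersection of the rings of invariants of all exponential maps (equivalently, $\G_a$-actions in arbitrary characteristic) on $B=\mathcal O(X)$. For a polynomial ring, $\mathrm{ML}=k$. The aim is to show $\mathrm{ML}(B)\ne k$; more precisely, every exponential map on $B$ should fix the $x_i$, so that $k[x_1,\dots,x_m]\subseteq \mathrm{ML}(B)$. The method is to pass to the associated graded ring for a suitable $\Z$-filtration adapted to the weights. Exponential maps on $B$ induce nontrivial exponential maps on $\mathrm{gr}(B)$, which is of the form $k[\bar x,\bar y,\bar z,\bar t]/(\bar x^{\underline r}\bar y - \bar z^{p^e})$ or $(\ldots - \bar t^{sp})$. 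On these graded rings one shows by a direct analysis of homogeneous exponential maps that $\bar x_i$ lies in the kernel; the hypothesis $r_i\ge2$ is what makes this work. Pulling back then gives $x_i\in\mathrm{ML}(B)$.

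The main obstacle is Step 2, and specifically the combinatorics of the graded ring. We must show that the non-rectifiability of $F$ actually prevents the $x_i$ from escaping the invariant ring. This is exactly where characteristic $p$ enters: the conditions $p^e\nmid sp$ and $sp\nmid p^e$ ensure that $F$ admits no coordinate change reducing it to a variable. We would first try to settle the case $m=1$ and then induct on $m$ by localizing at one $x_i$.
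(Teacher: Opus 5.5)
Your overall route is the one behind the result the paper cites: build Asanuma's variety from a Segre--Nagata line, then use a Makar-Limanov/exponential-map argument via an associated graded ring. The paper itself gives no proof. However, both of your steps have genuine problems as you set them up.

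\textbf{Step 1.} The mechanism you describe is false. $V(F)\times\A^1$ is \emph{not} rectifiable in $\A^3$, i.e.\ $F$ is not a variable of $k[z,t,w]$. If it were, then $\bigl(k(F)\otimes_{k[F]}k[z,t]\bigr)^{[1]}\cong k(F)^{[2]}$, and Abhyankar--Eakin--Heinzer cancellation would force the generic fibre of $F$ to be $\A^1_{k(F)}$. For Segre--Nagata lines, however, the generic fibre is a nontrivial, purely inseparable form of $\A^1$. For example, take $k$ algebraically closed of characteristic $3$ and $F=z^9+t+t^6$. Then the curve $F=c$ has no $k(c)$-rational point: putting $v=z^3+t^2$, one needs $v-c^2-cv^3\in k(c)^3$, and writing $v$ in the basis $1,c,c^2$ of $k(c)$ over $k(c^3)$ and counting degrees rules this out. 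So there is no automorphism to lift. The cylinder property comes from Asanuma's fibration-theoretic argument instead. For $n=3$ you may simply quote Theorem \ref{Asanuma}. For $n\ge 4$, the relation $x_1^{r_1}\cdots x_m^{r_m}y+F=0$ is not of the shape covered by Theorem \ref{Asanuma} as stated, which has a single $t^r$. You therefore need a relative version over $R=k[x_1,\dots,x_m]$ with the non-zerodivisor $x_1^{r_1}\cdots x_m^{r_m}$.

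\textbf{Step 2.} This is the more serious gap; you leave it open, and the filtration you propose cannot close it.
\begin{itemize}
\item A graded ring $k[\bar x,\bar y,\bar z,\bar t]/(\bar x^{\underline r}\bar y+\bar z^{p^e})$ does not depend on $s$. The same weights with $p^e w(z)>sp\,w(t)>0$ produce it for every $s$, and symmetrically the $\bar t^{sp}$ version does not depend on $e$.
\item If $p^e\mid sp$ or $sp\mid p^e$, then $F$ is a variable of $k[z,t]$. For instance, $sp=p^eq$ gives $F=(z+t^q)^{p^e}+t$. In that case $B\cong k^{[n]}$ and $\mathrm{ML}(B)=k$.
\item Hence no argument using only your graded ring, together with the general fact that $\phi$ induces a nontrivial homogeneous $\bar\phi$, can yield $x_i\in\mathrm{ML}(B)$.
\end{itemize}

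The filtration must keep both $z^{p^e}$ and $t^{sp}$, with weights $p^e w(z)=sp\,w(t)>0$, so that only the linear term drops. This gives
$\mathrm{gr}(B)\cong k[\bar x_1,\dots,\bar x_m,\bar y,\bar z,\bar t]/(\bar x_1^{r_1}\cdots\bar x_m^{r_m}\bar y+\bar z^{p^e}+\bar t^{sp})$.
The conditions $p^e\nmid sp$ and $sp\nmid p^e$ must be used on this ring; when one exponent divides the other, it degenerates into exactly the kind of cylinder you proposed.

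Two further points are unsupported:
\begin{itemize}
\item Even granting $\bar x_i\in\mathrm{gr}(B)^{\bar\phi}$, this does not by itself give $x_i\in B^{\phi}$. One only knows that leading forms of elements of $B^\phi$ lie in $\mathrm{gr}(B)^{\bar\phi}$, so a separate argument is needed.
\item Your induction by localizing at $x_i$ is circular. An exponential map $\phi$ extends to $B[x_i^{-1}]$ only if $\phi(x_i)=x_i$, which is what you are trying to prove.
\end{itemize}
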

The big problem is, of course, how to find suitable candidates for such schemes $X$, if they exist. This is where the rectification problem (Question \ref{Rectifiability}) comes into play. In \cite{MR1720185}, Asanuma proved the following result.
\begin{thm}\label{Asanuma}
Let $k$ be a base field and let $i \colon \A^m \hookrightarrow \A^n$ be a non-rectifiable embedding and write $i(\A^m) = V(f_1,\dotsc,f_{n-m})$ for polynomials $f_1,\dotsc,f_{n-m} \in k[x_1,\dotsc,x_n]$. Then 
$$\Spec(k[t,x_1,\dotsc,x_n,y_1,\dotsc,y_{n-m}]/(t^ry_1 - f_1,\dotsc,t^ry_{n-m}-f_{n-m})) \times \A^m \cong \A^{n+m+1}$$
for all integers $r \geq 1$. 
\end{thm}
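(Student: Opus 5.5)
We write $A = k[x_1,\dotsc,x_n]$ and $B = A[t,y_1,\dotsc,y_{n-m}]/(t^ry_j - f_j)_j$, so that $X = \Spec(B)$. The goal is to show $B[u_1,\dotsc,u_m] \cong k^{[n+m+1]}$, a polynomial ring in $n+m+1$ variables. The approach is Asanuma's: the hypothesis ``non-rectifiable'' plays no role in the isomorphism itself; it only makes $X$ a candidate for $X \not\cong \A^{n}$. So it suffices to prove the stable triviality for an arbitrary closed embedding $i \colon \A^m \hookrightarrow \A^n$ that is cut out by $n-m$ equations.

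The first step is to regard $B$ as an algebra over $k[t]$ and compare its fibres. Over $t \neq 0$, i.e. after inverting $t$, the relations solve $y_j = t^{-r}f_j$, so $B_t \cong A[t,t^{-1}]$. Over $t = 0$ we get $B/tB \cong (A/(f))[y_1,\dotsc,y_{n-m}] \cong k[s_1,\dotsc,s_m][y_1,\dotsc,y_{n-m}]$, which is a polynomial ring in $n$ variables because $A/(f) \cong k^{[m]}$. Thus every fibre of $X \to \A^1_t$ is an affine space $\A^n$. The generic fibre is trivially an affine space, and $B$ is flat over $k[t]$ since $t$ is a nonzerodivisor; this follows because $f_1,\dotsc,f_{n-m}$ is a regular sequence.

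The second step is to upgrade this fibrewise statement to a global one after adding $m$ variables, by explicit coordinate changes. Choose polynomials $g_1,\dotsc,g_m \in A$ whose images in $A/(f)$ are the coordinates $s_1,\dotsc,s_m$. Using the embedding $\A^m \hookrightarrow \A^n$, the inclusion $k[s] \to A$ splits $A \to A/(f)$. Then $k[t,g,f]\subseteq A$, and the key sub-claim is that
\[
A[u_1,\dotsc,u_m] = k[t][f_1,\dotsc,f_{n-m}, g_1,\dotsc,g_m, \text{further } n \text{ variables}].
\]
In other words, after stabilizing, the embedding becomes rectifiable. I would obtain this from the fact that the graph-type embedding $\A^m \hookrightarrow \A^n \times \A^m$, $s\mapsto (i(s),s)$, is rectifiable by an elementary triangular automorphism: $(x,u)\mapsto (x,u-g(x))$ followed by $(x,u)\mapsto(x - i(u),u)$. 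This is Asanuma's ``stable rectifiability'', in the style of Kambayashi/Nori. Given this, the ideal $(f)A[u]$ corresponds to a coordinate subspace, say $f_j$ maps to a coordinate $z_j$ up to the choice of generators. Then
\[
B[u] \cong k[t, z_1,\dotsc,z_{n-m}, w_1,\dotsc,w_{2m}, y]/(t^ry_j - z_j) \cong k[t,y,w],
\]
because each relation eliminates $z_j$. This is a polynomial ring in $1+(n-m)+2m = n+m+1$ variables, as claimed.

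The main obstacle is the identification of the ideal after stabilization. The triangular automorphism sends $I=(f)A[u]$ to the ideal of a coordinate subspace, but the specific generators $f_j$ might not be sent to the coordinate generators themselves, only to another generating set of the same ideal. I would handle this by noting that two generating sets of length $n-m$ for a complete-intersection ideal differ by a matrix in $\mathrm{GL}_{n-m}$ modulo $I^2$-type corrections. On the $t^r y$ side this is absorbed by a linear change of the $y_j$ via the matrix over $A[u]$, at the cost of possibly adding further variables. If the matrix is not elementary, the Quillen--Suslin theorem, invoked as in Theorem \ref{Knots are Complete Intersections}, lets us stabilize it to an elementary matrix.
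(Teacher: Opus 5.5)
The paper does not prove this statement; it only quotes it from Asanuma \cite{MR1720185}. Judged on its own, your proposal has a genuine gap at its central step.

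\textbf{The gap.} Write $i=(\phi_1,\dots,\phi_n)$ and $I=(f_1,\dots,f_{n-m})$. The graph trick rectifies the ideal of the graph $s\mapsto(i(s),s)$, namely
\[
J \;=\; IA[u]+(u_1-g_1,\dots,u_m-g_m) \;=\; (x_1-\phi_1(u),\dots,x_n-\phi_n(u)).
\]
This is an ideal of height $n$ in $A[u]$. It says nothing about the height-$(n-m)$ ideal $IA[u]$, which cuts out the cylinder $i(\A^m)\times\A^m$. Making $IA[u]$ a coordinate ideal amounts to rectifiability of $i\times\id_{\A^m}\colon\A^{2m}\hookrightarrow\A^{n+m}$, which is much stronger and false in general.

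For $n=2$, $m=1$ the ideal is principal, so there is no freedom in the generators, and your claim says that $f$ is a variable of $k[x,y,u]$. Take Nagata's line $f=x^{p^2}+y+y^{pq}$ over $k=k^{\alg}$. Suppose $k[x,y,u]=k[f,a,b]$. Base change to $k(f)$ and Abhyankar--Eakin--Heinzer cancellation for curves show that the generic fibre of $f$ is $\A^1_{k(f)}$. Every closed fibre $V(f-c)$ is a translate of $V(f)\cong\A^1$, since $c=d^{p^2}$ and $x^{p^2}-c=(x-d)^{p^2}$. So $k[x,y]$ would be an $\A^1$-fibration over $k[f]$, hence $k[f]^{[1]}$ by Kambayashi--Miyanishi, and $f$ would be rectifiable, which it is not. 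Your argument therefore breaks down exactly in the positive-characteristic cases behind Gupta's counterexamples. Separately, your displayed sub-claim is incoherent as written: $t\notin A[u]$, and $(n-m)+m+n\neq n+m$.

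\textbf{What is missing.} Two ideas are needed.

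First, trade the extra $\A^m$ for extra generators of the ideal, rather than using it only to stabilize $A$. Substituting $u_l=t^rv_l+g_l$ gives
\[
B[v_1,\dots,v_m]\;\cong\; A[u][t,y,v]\big/\bigl(t^ry_j-f_j,\ t^rv_l-(u_l-g_l)\bigr).
\]
This is the same construction applied to the graph ideal $J\subset A[u]\cong k^{[n+m]}$.

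Second, the construction depends only on the ideal. If a regular sequence $a_1,\dots,a_s$ generates $\mathfrak{a}\subset R$, then $t$ is a nonzerodivisor on $R[t,Y]/(t^rY-a)$. Hence this ring maps isomorphically onto the subring $R[t,\mathfrak{a}t^{-r}]\subset R[t,t^{-1}]$. Both generating sets of $J$ are regular sequences, so you may replace them by $x_i-\phi_i(u)$. The triangular automorphism $x_i\mapsto x_i+\phi_i(u)$ then gives $B[v]\cong k[u,t,Y_1,\dots,Y_n]\cong k^{[n+m+1]}$.

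This also resolves your generator problem, for which your proposed repair does not work. The transition matrix between two generating sets is only invertible modulo the ideal, so it cannot be absorbed by a linear change of the $y_j$, and Quillen--Suslin does not help. Your fibrewise computation in the first step is correct but unused. Fed into Asanuma's fibration theorem, it would only give triviality after an unspecified number of extra variables.
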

If we work in positive characteristic, we can apply this to the non-rectifiable embeddings $\A^1 \hookrightarrow \A^2$ of Segre \cite{MR95175} or Nagata \cite{MR337962} that we listed above. As the reader may suspect, these potential counterexamples for Zariski cancellation were indeed the ones that Gupta verified to be actual counterexamples in \cite{MR3148104}. These counterexamples were then modified by Gupta \cite{MR3250286} to yield counterexamples for all $n \geq 3$ as stated in Theorem \ref{Gupta}.

\hfill

This begs the question why this does not work in characteristic $0$. First of all, we are not in the fortunate situation that we have non-rectifiable embeddings $\A^1 \hookrightarrow \A^2$. This forces us to consider $n \geq 3$. Here we can use the algebraic trefoil knot based on the results of Shastri frm \cite{MR1145717}, but so far nobody managed to prove that the resulting Asanuma variety is not an affine space; see also the end of section 2 of \cite{zbMATH07823034}. 

In general, since we do not understand the automorphism groups of those higher-dimensional affine spaces well enough, and have no good rectifiability invariants for closed embeddings $\A^m \hookrightarrow \A^n$, it seems rather difficult to find non-rectifiable embeddings and hence to get potential counterexamples to Zariski cancellation. The main goal of this article is to explain how to obtain an invariant of embeddings $\A^1 \hookrightarrow \A^3$ using motivic homotopy theory, which by definition seems rather computable and hence useful. As mentioned before, this is investigated in more detail in our follow-up article \cite{WW2}.

\section{Recollection of some Unstable Motivic Homotopy Theory}
As this article could be particularly interesting to (affine) algebraic geometers, we give a brief introduction to some aspects of unstable motivic homotopy theory. Although we choose to work in a different language, we refer to \cite{MR3727503} for more details.

\hfill

Certain results that we want to use are not known for general qcqs base schemes $S$, so that we restrict ourselves to a base more suitable for our purposes, namely to a perfect base field $k$ of characteristic not $2$. This is the setup for which most is known in motivic homotopy theory.

\begin{de}
The category $\Spc(k)$ of motivic spaces over $k$ is the full subcategory of the category $\PShv(\Sm_k) = \Fun(\Sm_k^{\op},\Spc)$ generated by those presheaves that satisfy Nisnevich descent and that are $\A^1$-invariant.
\end{de}
We will not explain why the definition is exactly the way it is, but we still want to highlight that one key aspect of this definition is that the affine line $\A^1$ becomes contractible. If the reader has never seen the Nisnevich topology, we recommend not to worry too much about it when it comes to understanding this article. 

\hfill

There are two central kinds of examples of motivic spaces:
\begin{ex}
Every smooth scheme $X$ over $k$ gives rise to a motivic space over $k$ as follows. We can consider its associated presheaf on $\Sm_k$, which is a Nisnevich sheaf as the Nisnevich topology is subcanonical, that is, all representable presheaves are sheaves. If $X$ also happens to be $\A^1$-invariant, we are done. This is the case, for instance, for $\G_{\rm m} = \A^1 \setminus \lbrace 0 \rbrace$ and boils down to the fact that the units of $k[x]$ agree with those of $k$ (or, more generally, units of $R[x]$ agree with units of $R$ for reduced $k$-algebras). If $X$ is not $\A^1$-invariant, we have to enforce $\A^1$-invariance in such a way that we do not lose Nisnevich descent, which can be done; see \cite{MR3727503}. 
\end{ex}
\begin{ex}
Every space $X$ yields a motivic space over $k$. For this we just have to consider the constant Nisnevich sheaf on $\Sm_k$ with value $X$. In particular, the $n$-sphere $S^n$ defines a motivic space over $k$ for any $n \geq 0$.
\end{ex}
As in topology, we also want to consider pointed variants of our objects, i.e., we want to consider pointed motivic spaces $\Spc(k)_*$. This category can either be defined as the slice category $\Spc(k)_{/ \Spec(k)}$ or as the full subcategory of $\PShv(\Sm_k^{\op},\Spc_*)$ generated by those $\Spc_*$-valued presheaves on $\Sm_k$ that satisfy Nisnevich descent and which are $\A^1$-invariant. 
\begin{ex}
Typical examples of pointed motivic spaces are: 
\begin{enumerate}
\item[$\bullet$] $(\mathbb{A}^n,e_1)$ with $e_1 = (1,0,\dotsc,0)$ and in particular $(\mathbb{A}^1,1)$
\item[$\bullet$] $(\mathbb{A}^n \setminus \lbrace 0 \rbrace,e_1)$ and in particular $(\G_{\rm m},1)$
\item[$\bullet$] $(\mathbb{P}^1,\infty)$
\item[$\bullet$] $(S^n,1)$
\end{enumerate}
These occur rather frequently, so that we usually drop the base point from the notation. 
\end{ex}

As spheres clearly play a crucial role in homotopy theory, we need to figure out what ``motivic spheres'' are. To motivate the definition, note that the usual spheres $S^n$ can all be defined in terms of the $1$-dimensional sphere $S^1$. Indeed, using the smash product ``$\wedge$'', a ``tensor product'' of pointed spaces, we have $S^n \simeq (S^1)^{\wedge n}$ for all $n \geq 0$. 

If we, as mentioned above, model pointed motivic spaces as a full subcategory of $\PShv(\Sm_k^{\op},\Spc_*)$, it is not difficult to imagine that we can lift the smash product from $\Spc_*$ to a smash product on $\Spc(k)_*$ satisfying the usual properties, including the fact that its unit is $S^0$; see Section 4.5 of \cite{MR3727503}. Therefore we can try to mimic the topological situation and define motivic spheres as smash products of $1$-dimensional motivic spheres, for which we certainly have two candidates in $\Spc(k)_*$:
$$\text{topological }1\text{-sphere }S^1 \hspace{70pt} \text{algebraic }1\text{-sphere }\G_{\rm m}$$
In analogy with the topological spheres, this leads to the following.
\begin{de}
Let $p$ and $q$ be non-negative integers with $p \geq q$. The motivic sphere $S^{p,q}$ of bidegree $(p,q)$ is the (pointed) motivic space $(S^1)^{\wedge (p-q)} \wedge \mathbb{G}_m^{\wedge q}.$
\end{de}
Before we consider a few examples, we want to comment on the discrepancy between the first component of the bidegree and the number of copies of $S^1$. This convention, which is the standard one in motivic homotopy theory, is chosen in a way that it aligns well with the bidegree of motivic cohomology.
\begin{ex}
By definition we have $S^{1,0} = S^1$ and $S^{1,1} = \G_{\rm m}$.
\end{ex}
\begin{ex}
We have $S^{2,1} \simeq \P^1$. Recall that $\P^1$ can be defined/constructed by gluing two copies of $\A^1$ along $\G_{\rm m}$, i.e., we have a pushout square
\begin{center}
\begin{tikzcd}
\G_{\rm m} \arrow[r] \arrow[d] & \mathbb{A}^1 \arrow[d] \\
\mathbb{A}^1 \arrow[r] & \mathbb{P}^1 \arrow[ul, phantom, "\ulcorner", very near start]
\end{tikzcd}
\end{center}
of smooth schemes over $k$. As it turns out, this is not only a pushout square in $\Sm_k$, but also in $\Spc(k)_*$ if we consider $\P^1$ as a pointed motivic space with base point $1$; see, for instance, Proposition 4.13 of \cite{MR3727503}. But $\A^1$ is contractible, so that this pushout square is equivalent to
\begin{center}
\begin{tikzcd}
\G_{\rm m}  \arrow[r] \arrow[d] & * \arrow[d] \\
* \arrow[r] & (\mathbb{P}^1,1) \arrow[ul, phantom, "\ulcorner", very near start].
\end{tikzcd}
\end{center}
As such pushouts are computed by applying $\Sigma \simeq S^1 \wedge -$ to the object in the upper left corner, we therefore have
$$(\P^1,1) \simeq \Sigma \!\G_{\rm m} \simeq S^1 \wedge \G_{\rm m} = S^{2,1}.$$
Finally, note that the automorphism
$$\left ( \begin{array}{cc}
0 & 1 \\

1 & -1

\end{array} \right ) \in \PGL_2(k) = \Aut(\mathbb{P}^1)$$
maps $1$ to $\infty$, and the claim follows.
\end{ex}
In the argument we used that suspensions $\Sigma$ agree with smashing with $S^1$, which is true for any $\infty$-category which has coproducts, so that suspensions exist, and which is tensored over $\Spc_*$, so that we can smash with $S^1$. For this one just uses the hom-tensor adjunction to verify the universal property of pushouts.
\begin{ex}\label{Punctured Affine Space as Sphere}
Similarly, an induction on $n$ yields $\A^n \setminus \lbrace 0 \rbrace \simeq S^{2n-1,n}$ as shown, for instance, in Proposition 4.40 of \cite{MR3727503}.
\end{ex}
Now that we have spheres, we can define homotopy groups in the usual way, that is, as homotopy classes of maps out of spheres. In motivic homotopy theory, these are usually defined as sheaves.
\begin{de}
The $(p,q)$-th $\A^1$-homotopy sheaf $\underline{\pi}_{p,q}^{\A^1}(X,x)$ of a pointed motivic spaces $(X,x)$ is the Nisnevich sheafification of the presheaf $\widetilde{\phantom{\pi}}\hspace{-6pt}\pi_{p,q}^{\A^1}(X,x)$ on $\Sm_k$ given by mapping $U \mapsto [S^{p,q}\wedge U_+,(X,x)]_*$.
\end{de}
Here the notation $[-,-]_*$ denotes homotopy classes of pointed maps, as per usual. While we can define $\A^1$-homotopy sheaves in bidegree $(p,q)$ for all $p \geq q \geq 0$, there is an algebraic incarnation of $\G_{\rm m}$-loop spaces called (homotopy) contraction; see page 33 of \cite{MR2934577}. Therefore, it is rather common to focus on homotopy sheaves with respect to the topological spheres only, which are denoted by $\underline{\pi}_p^{\A^1}\!(X,x)$ to simplify notation. 

\hfill

In terms of these we can also define what it means to be $\A^1$-discrete. For a motivic space $X$, we can consider the map 
$$\pi_0 \colon X(-) \simeq \Map(-,X) \rightarrow [-,X],$$
which gives us a natural map $X \rightarrow \underline{\pi}_0^{\A^1}\!(X)$ after sheafification.
\begin{de}
A motivic space $X$ is $\A^1$-discrete, if the natural map $X \rightarrow \underline{\pi}_0^{\A^1}\!(X)$ is an equivalence. 
\end{de}
\begin{ex}\label{A^1-invariant sheaves are discrete}
Any $\A^1$-invariant Nisnevich sheaf $M$ of discrete spaces on $\Sm_k$ is $\A^1$-discrete. Indeed, under these assumptions the Yoneda lemma tells us that $M$ agrees with $[-,M] \cong \widetilde{\phantom{\pi}}\hspace{-6pt}\pi_0^{\A^1}\!(M).$ Since $M$ is a sheaf, this presheaf must already be one as well and thus coincides with its sheafification $\underline{\pi}_0^{\A^1}\!(M)$. In particular, $\G_{\rm m}$ is $\A^1$-discrete.
\end{ex}
\begin{ex}\label{Gm x A1 is discrete}
Also $\G_{\rm m} \times \A^1$ is $\A^1$-discrete. For this note that he presheaf 
$$\widetilde{\phantom{\pi}}\hspace{-6pt}\pi_0^{\A^1}\!(-)(U) = [S^0 \wedge U_+,-]_* \cong [U_+,-]_* \cong [U,-]$$
is $\A^1$-invariant, so that we have $\widetilde{\phantom{\pi}}\hspace{-6pt}\pi_0^{\A^1}\!(\G_{\rm m} \times \A^1) \cong \widetilde{\phantom{\pi}}\hspace{-6pt}\pi_0^{\A^1}\!(\G_{\rm m})$ and hence $\underline{\pi}_0^{\A^1}\!(\G_{\rm m} \times \A^1) \cong \underline{\pi}_0^{\A^1}\!(\G_{\rm m})$ via the projection map. Therefore, using the $\A^1$-discreteness of $\G_{\rm m}$ and that the projection $\G_{\rm m} \times \A^1 \rightarrow \G_{\rm m}$ is an equivalence, also $\G_{\rm m} \times \A^1$ is $\A^1$-discrete.
\end{ex}
The reason that we argued as we just did is that the sheaves $\underline{\pi}_0^{\A^1}\!(X)$ are not $\A^1$-invariant in general. The $\A^1$-invariance of the sheaves $\underline{\pi}_0^{\A^1}\!(X)$ was conjectured by Morel (for instance as Conjecture 1.12 in \cite{MR2934577}), but a counterexample was recently discovered by Ayoub \cite{Pizero}. 
\begin{lem}\label{Homotopy of Discrete Motivic Spaces}
Let $X$ be a pointed $\A^1$-discrete motivic space. Then $\underline{\pi}^{\A^1}_n\!(X) = 0$ for all positive $n$.
\end{lem}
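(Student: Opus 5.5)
The plan is to reduce everything to the fact that $X$ is, up to equivalence, a sheaf of sets. Since $X$ is $\A^1$-discrete, the natural map $X \rightarrow \underline{\pi}_0^{\A^1}\!(X)$ is an equivalence of motivic spaces. In particular, $\underline{\pi}_0^{\A^1}\!(X)$ is itself a motivic space, hence Nisnevich local and $\A^1$-invariant. As a presheaf of spaces it is $0$-truncated: it takes values in sets, viewed as discrete spaces. Because $\underline{\pi}^{\A^1}_n$ is defined in terms of homotopy classes of pointed maps and is invariant under equivalence, we may replace $(X,x)$ by $(\underline{\pi}_0^{\A^1}\!(X),x)$. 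We may therefore assume from the outset that $X$ is a Nisnevich sheaf of sets that is already a motivic space, pointed by $x$.

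Next I compute the presheaf $\widetilde{\phantom{\pi}}\hspace{-6pt}\pi_{n}^{\A^1}(X,x)$ for $n \geq 1$. Its value on $U$ is $[S^n \wedge U_+,(X,x)]_*$. Since $X$ is a motivic space, maps in $\Spc(k)_*$ into $X$ agree with maps in pointed presheaves. By the tensoring over $\Spc_*$ (the hom-tensor adjunction used above for suspensions), this set is $\pi_0$ of the pointed mapping space $\Map_*(S^n, \Map_*(U_+,X))$. The latter is $\Omega^n$ of the pointed space $\Map(U,X) \simeq X(U)$, by Yoneda. Since $X(U)$ is a discrete space, its $n$-fold loop space based at $x$ is contractible for $n \geq 1$. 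Hence $\widetilde{\phantom{\pi}}\hspace{-6pt}\pi_{n}^{\A^1}(X,x)(U) = \pi_n(X(U),x) = 0$ for every $U$. The sheafification of the zero presheaf is zero, which gives $\underline{\pi}^{\A^1}_n\!(X) = 0$.

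The only step needing care is the first: the passage from the equivalence $X \simeq \underline{\pi}_0^{\A^1}\!(X)$ to the statement that the values $X(U)$ are discrete. Here I would simply use that equivalences in $\Spc(k)$ between motivic spaces (local objects) are objectwise equivalences of presheaves of spaces. The values of $X$ are therefore homotopy equivalent to the sets $\underline{\pi}_0^{\A^1}\!(X)(U)$, hence $0$-truncated, and the rest is formal. One could avoid this replacement entirely by working directly with $X$: its values $X(U)$ are equivalent to sets, so their higher homotopy groups vanish at every basepoint.
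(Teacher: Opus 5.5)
Your proof is correct and follows the same route as the paper. Both arguments replace $X$ by the sheaf of sets $\underline{\pi}_0^{\A^1}\!(X)$, which is $\A^1$-invariant because of the equivalence, and then use that a discrete object has no higher homotopy. The only difference is that you compute the presheaf $U \mapsto [S^n \wedge U_+,(X,x)]_* \cong \pi_n(X(U),x) = 0$ directly, where the paper instead invokes that for an $\A^1$-invariant sheaf the simplicial homotopy sheaves $\underline{\pi}_n$ coincide with the $\underline{\pi}_n^{\A^1}$.
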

\begin{proof}
Since $X$ is $\A^1$-discrete, we have $X \simeq \underline{\pi}^{\A^1}_0\!(X)$, so that it suffices to prove the statement for $\underline{\pi}^{\A^1}_0\!(X)$. This is a sheaf of discrete spaces so that $\underline{\pi}_n(\underline{\pi}^{\A^1}_0\!(X)) = 0$ for all $n \geq 1$. Via the equivalence from above, $\underline{\pi}^{\A^1}_0\!(X)$ is $\A^1$-invariant, so that $\underline{\pi}_n(\underline{\pi}^{\A^1}_0\!(X)) = \underline{\pi}_n^{\A^1}\!(\underline{\pi}^{\A^1}_0\!(X))$. Therefore we are done.
\end{proof}
Also many other notions and tools, such as connectedness and homology, work in this setting. We do not want to introduce these here, but let us note that they are similar to their counterparts in topology; see Section 6 of \cite{MR2934577}. Therefore we hope that also readers not yet familiar with these should have a general understanding of how to deal with these notions.
\begin{ex}\label{Homotopy Sheaf Example}
Let $p \geq 2$. The sphere $S^p \in \Spc_*$ is $(p-1)$-connected, so that by Theorem 1.18 of \cite{MR2934577}, $S^{p+q,q} = S^p \wedge \G_{\rm m}^{\wedge q}$ is $\A^1$-$(p-1)$-connected. Using the Hurewicz theorem (\cite[Theorem 6.37]{MR2934577}) and suspension isomorphisms (\cite[Remark 6.30]{MR2934577}) of $\A^1$-homology, we thus have
$$\underline{\pi}_p^{\A^1}\!(S^{p+q,q}) = \underline{\pi}_p^{\A^1}\!(S^p \wedge \G_{\rm m}^{\wedge q}) \cong \widetilde{\phantom{H}}\hspace{-9.5pt}\underline{H}_p^{\A^1}\!(S^p \wedge \G_{\rm m}^{\wedge q}) \cong \widetilde{\phantom{H}}\hspace{-9.5pt}\underline{H}_0^{\A^1}\!(\G_{\rm m}^{\wedge q}).$$
By Example \ref{Punctured Affine Space as Sphere}, we have $\A^n \setminus \lbrace 0 \rbrace \simeq S^{2n-1,n}$, so that this in particular shows
$$\underline{\pi}_{n-1}^{\A^1}(\A^n \setminus \lbrace 0 \rbrace) \cong \widetilde{\phantom{H}}\hspace{-9.5pt}\underline{H}_0^{\A^1}\!(\G_{\rm m}^{\wedge n})$$ 
for all $n \geq 3$. This is also true for $n =2$, but the argument is a tiny bit more involved. The map $\SL_2 \rightarrow \A^2 \setminus \lbrace 0 \rbrace$ which maps a matrix to its last column is an equivalence, so that $\A^2 \setminus \lbrace 0 \rbrace$ is a motivic version of an H-space. As is true classically, the fundamental sheaf of such motivic spaces is abelian, and in this case a weak form of the Hurewicz theorem (see Remark \ref{Motivic Hurewicz in deg 1}) applies and gives the isomorphism to homology; see also Remark 6.41 in \cite{MR2934577}. 

For $q \neq 0$, these are computations of certain motivic stable homotopy sheaves of spheres in terms of some kind of free abelian object on $\G_{\rm m}^{\wedge q}$, namely the so-called free strictly $\A^1$-invariant sheaf on the pointed motivic space $\G_{\rm m}^{\wedge q}$. There is, of course, a precise meaning behind those words, but for this article it suffices to treat this as a name, which we therefore do. If the reader wants to learn more about those notion, we refer them to \cite{TomWS}.
\end{ex}
Let us try to understand the right hand side, that is, the reduced $\A^1$-homology sheaf $\widetilde{\phantom{H}}\hspace{-9.5pt}\underline{H}_0^{\A^1}\!(\G_{\rm m}^{\wedge q})$, a bit better. For $q = 0$ it can easily be computed since $\G_{\rm m}^{\wedge 0} \simeq S^0$, and the result is the constant sheaf $\underline{\Z}$. For $q > 0$, we need the following.
\begin{de}\label{Def of KMW}
The (naive) Milnor--Witt K-theory sheaf $\KK^{\MW}_*$ is the sheaf of $\Z$-graded unital associative rings generated by $\G_{\rm m}$ in degree $1$ and by a symbol $\eta$ in degree $-1$, subject to:
\begin{enumerate}
\item[(i)] $[ab] = [a] + [b] + \eta[a][b]$, $a,b \in \mathcal{O}(-)^\times$ \hfill (Twisted tensor relation)
\item[(ii)] $[a][1-a] = 0$, $a, 1-a \in \mathcal{O}(-)^\times$ \hfill (Steinberg relation)
\item[(iii)] $\eta[a] = [a]\eta$, $a \in \mathcal{O}(-)^\times$ \hfill (Centrality relation)
\item[(iv)] $\eta(2 + \eta[-1]) = 0$ \hfill (Witt relation)
\end{enumerate}
\end{de}
There is actually a little subtlety here. For finite fields the above definition is not the correct one. The object we actually want is called unramified Milnor--Witt K-theory, which is quite a bit more cumbersome to define as a sheaf; see Section 2.3 and 3.2 of \cite{MR2934577}. To keep this recollection simpler, will not get into the details and just present this simpler definition which works over most fields. Morel shows in Theorem 3.37 of \cite{MR2934577}:

\begin{thm}\label{Milnor--Witt K-theory as Free Object}
Let $q > 0$. The universal symbol map $\G_{\rm m}^{\wedge q} \rightarrow \KK^{\MW}_q$, $(a_1,\dotsc,a_q) \mapsto [a_1]\cdot \dotsc \cdot [a_q]$ realizes $\KK^{\MW}_q$ as the free strictly $\A^1$-invariant sheaf on the pointed motivic space $\G_{\rm m}^{\wedge q}$.
\end{thm}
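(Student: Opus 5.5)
Since $q>0$, the sheaf $\KK^{\MW}_q$ is defined as the degree $q$ part of the naive Milnor--Witt sheaf of Definition \ref{Def of KMW}. The plan is to check the universal property directly. Concretely, we must show two things. First, $\KK^{\MW}_q$ is strictly $\A^1$-invariant and the symbol map $\G_{\rm m}^{\wedge q}\to\KK^{\MW}_q$ is a well-defined pointed map. Second, for every strictly $\A^1$-invariant sheaf of abelian groups $M$, restriction along the symbol map gives a bijection
$$\Hom(\KK^{\MW}_q,M)\xrightarrow{\ \sim\ }\Hom_*(\G_{\rm m}^{\wedge q},M).$$
Well-definedness is immediate, because $[1]=0$ in $\KK^{\MW}_1$: apply the twisted tensor relation with $a=b=1$. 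Hence the symbol vanishes whenever some $a_i=1$, which is exactly the condition to factor through the smash product.

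\textbf{Step 1: strict $\A^1$-invariance.} I would use Morel's theory of unramified sheaves. On finitely generated field extensions $F/k$, one constructs $\K^{\MW}_q(F)$ as the presented group above. One then builds residue maps $\partial_v\colon \K^{\MW}_q(F)\to \K^{\MW}_{q-1}(\kappa(v))$ (twisted by the normal line) and specialization maps. The key input is the Milnor--Witt analogue of Milnor's exact sequence
$$0\to \K^{\MW}_q(F)\to \K^{\MW}_q(F(t))\to \textstyle\bigoplus_{x}\K^{\MW}_{q-1}(\kappa(x))\to 0,$$
taken over closed points $x\in\A^1_F$. This gives homotopy invariance for fields. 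It also gives the axioms of an unramified sheaf with the extra data (transfers/residues) that Morel shows characterizes strictly $\A^1$-invariant sheaves. The Gersten/Rost--Schmid complex then computes the cohomology and yields $\A^1$-invariance of $H^i_{\Nis}(-,\KK^{\MW}_q)$. This is where the precise sheaf in the statement matters, namely the unramified version rather than the naive one.

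\textbf{Step 2: universality.} Let $\phi\colon\G_{\rm m}^{\wedge q}\to M$ be a pointed map. Since both sheaves are unramified, a morphism between them is determined by its values on fields. So it suffices to construct $\K^{\MW}_q(F)\to M(F)$ naturally in $F$ and compatibly with residues. On a field, define the map on symbols by $[a_1]\cdots[a_q]\mapsto\phi(a_1,\dots,a_q)$. To extend this to products involving $\eta$, use the contraction $M_{-1}$. The element $\eta$ should act through the Hopf map, via the structure that any strictly $\A^1$-invariant $M$ carries of a module over $\K^{\MW}_0=\GW$ on its contractions $M_{-n}$. One then verifies the relations. The Steinberg relation should follow from the $\A^1$-nullhomotopy of $\G_{\rm m}\wedge\G_{\rm m}\supset\{(a,1-a)\}$: the map $a\mapsto(a,1-a)$ from $\A^1\setminus\{0,1\}$ into $\G_{\rm m}^{\wedge2}$ is $\A^1$-nullhomotopic after suspension. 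The twisted tensor relation should come from the multiplication map $\G_{\rm m}\times\G_{\rm m}\to\G_{\rm m}$ and the stable splitting $\Sigma(\G_{\rm m}\times\G_{\rm m})\simeq\Sigma\G_{\rm m}\vee\Sigma\G_{\rm m}\vee\Sigma\G_{\rm m}^{\wedge2}$, whose third component is $\eta$. Uniqueness is clear because the symbols $[a_1]\cdots[a_q]$ generate $\K^{\MW}_q(F)$ for $q>0$: we have $\eta[a][b]=[ab]-[a]-[b]$, so $\eta$-terms are eliminable in positive degree. Finally, extend from fields to all of $\Sm_k$ using the unramifiedness of $M$ and the compatibility with residues $\partial_v$.

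\textbf{Main obstacle.} I expect the hardest part to be Step 1, together with the residue compatibility in Step 2. Proving the Milnor--Witt version of Milnor's theorem on $\K(F(t))$ requires a careful construction of twisted residues and of transfers independent of choices. In practice I would follow Morel, Chapters 2--3 of \cite{MR2934577}, and cite Theorem 3.37 there, rather than redoing this.
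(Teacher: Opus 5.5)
Your proposal matches the paper: the paper gives no proof of its own and simply quotes Morel's Theorem 3.37 in \cite{MR2934577}, which is exactly where you end up, and your Steps 1--2 are a reasonable outline of Morel's argument. One small slip: the twisted tensor relation with $a=b=1$ only gives $[1]=-\eta[1][1]$, so to get $[1]=0$ you also need the Witt relation (writing $h=2+\eta[-1]$, the case $a=-1$, $b=1$ gives $(1+\eta[-1])[1]=0$, hence $h[1]=[1]$ and $\eta[1]=\eta h[1]=0$, so $[1]=-(\eta[1])[1]=0$).
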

In other words, we have $\widetilde{\phantom{H}}\hspace{-9.5pt}\underline{H}_0^{\A^1}\!(\G_{\rm m}^{\wedge q}) \cong \KK^{\MW}_q$ for $q > 0$, and the above map shows us how to include $\G_{\rm m}^{\wedge q}$ as generators of this free object.

As strict $\A^1$-invariance is a stronger notion than $\A^1$-invariance, Example \ref{A^1-invariant sheaves are discrete} yields:
\begin{cor}\label{KMW discrete}
For all $q > 1$, the sheaves $\KK^{\MW}_q$ are discrete motivic spaces.
\end{cor}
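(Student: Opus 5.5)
The plan is to apply Example \ref{A^1-invariant sheaves are discrete} directly. That example says any $\A^1$-invariant Nisnevich sheaf of sets (viewed as a sheaf of discrete spaces) is $\A^1$-discrete. So it suffices to check three things about $\KK^{\MW}_q$. First, it is a Nisnevich sheaf; second, it takes discrete values; third, it is $\A^1$-invariant.

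The first two points hold by construction. $\KK^{\MW}_q$ is defined as a sheaf of abelian groups, namely the degree $q$ part of the sheaf of graded rings from Definition \ref{Def of KMW}, or more precisely its unramified version as in Morel. A sheaf of abelian groups is in particular a sheaf of sets. Regarded as a presheaf of spaces, it therefore takes discrete values and satisfies Nisnevich descent, since descent for discrete presheaves is exactly the sheaf condition.

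For the third point I would use Theorem \ref{Milnor--Witt K-theory as Free Object}. For $q>0$ it identifies $\KK^{\MW}_q$ as the free strictly $\A^1$-invariant sheaf on $\G_{\rm m}^{\wedge q}$, so in particular $\KK^{\MW}_q$ is strictly $\A^1$-invariant. Strict $\A^1$-invariance means that all Nisnevich cohomology groups $H^i_{\Nis}(-,\KK^{\MW}_q)$ are $\A^1$-invariant. Taking $i=0$, this says the sections themselves are $\A^1$-invariant: $\KK^{\MW}_q(U)\cong\KK^{\MW}_q(U\times\A^1)$ via the projection. Hence $\KK^{\MW}_q$ is an $\A^1$-invariant Nisnevich sheaf of discrete spaces, so it lies in $\Spc(k)$. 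Example \ref{A^1-invariant sheaves are discrete} then gives $\KK^{\MW}_q\simeq\underline{\pi}_0^{\A^1}(\KK^{\MW}_q)$, which is the claimed $\A^1$-discreteness.

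There is no serious obstacle; the only step needing care is the passage from strict to ordinary $\A^1$-invariance. Here one must make sure the sheaf meant is Morel's unramified $\KK^{\MW}_q$, so that Theorem \ref{Milnor--Witt K-theory as Free Object} applies. The argument works equally for all $q>0$, so the stated range $q>1$ is covered.
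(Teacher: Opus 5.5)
Your proposal is correct and follows the paper's argument: the paper also deduces the corollary directly from Example \ref{A^1-invariant sheaves are discrete}. It uses that $\KK^{\MW}_q$ is strictly $\A^1$-invariant by Theorem \ref{Milnor--Witt K-theory as Free Object}, and that strict $\A^1$-invariance (the $H^0$ case) implies ordinary $\A^1$-invariance. Your checks of the sheaf condition and discreteness of values, and your remark that the argument covers all $q>0$, make explicit what the paper leaves implicit.
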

The restriction to positive $q$ is actually not needed, but for non-positive $q$ we would need to argue differently; see Section 2.3 and 3.2 of \cite{MR2934577}. Moreover, as $q = 2$ is the most relevant case for this article, the above is sufficient.

\section{Knot Theory from the Motivic point of view}\label{Section 4}
In this section we present the main content of this article, which is to use motivic homotopy theory to lift some knot theoretic ideas to algebraic geometry. Therefore, let us remind the reader of some standard facts of knot theory, which can, for instance, be found in \cite{MR3156509}.

\hfill

There are quite some knot invariants which are defined in terms of the associated knot complements. For instance, given a knot $i \colon S^1 \hookrightarrow S^3$ with complement $X = S^3 \setminus i(S^1)$, we can consider the fundamental group $\pi_1(X)$, which is known as the knot group. Its abelianization is, independently of the knot, given by 
$$\pi_1(X)^{\ab} \cong H_1(X) \cong \Z$$
and is hence not an interesting invariant. But there is a way to turn it into a rather powerful invariant by
using the Galois correspondence from topology, that is, the correspondence between subgroups of $\pi_1(X)$ and coverings of $X$. Associated to the commutator subgroup $[\pi_1(X), \pi_1(X)] \subset \pi_1(X)$ is the universal $\Z$-covering $p \colon \widetilde{X} \rightarrow X$ of the knot complement, and its first homology $H_1(\widetilde{X})$ is the so-called Alexander module of the knot. It is a module over the group ring $\Z[C_\infty] = \Z[t,t^{-1}]$, where $C_\infty = \Z$ acts via deck transformations. Using local coefficients, the Alexander module can also be expressed as
$$H_1(\widetilde{X}) \cong H_1(\widetilde{X}, \underline{\Z}) \cong H_1(X,p_*\underline{\Z}),$$
which is a trade-off. The space is simpler, but the coefficients are more complicated.

\hfill

Based on this we can now present an overview of the motivic story.
\vspace*{-15pt}
\begin{center}
\def\arraystretch{1.5}
\begin{tabular}{ p{7cm} p{7cm} }\label{Table}

$\phantom{1}$ & $\phantom{1}$ \\

Classical theory & Motivic theory \\

\hline
knot $i \colon S^1 \hookrightarrow S^3$ & (algebraic) knot $i \colon \A^1 \hookrightarrow \A^3$ \\
knot complement $X = S^3 \setminus i(S^1)$ & knot complement $X = \A^3 \setminus \hspace{2pt} i(\A^1)$ \\
knot group $\pi_1(X)$ & knot sheaf $\underline{\pi}_1^{\A^1}\!(X)$ \\
abelianization $H_1(X) \cong \Z$ & abelianization? $\underline{H}_1^{\A^1}\!(X) \cong \KK^{\MW}_2$ \\
universal abelian $\Z$-covering $p \colon \widetilde{X} \rightarrow X$ & universal abelian $\KK^{\MW}_2$-covering $p \colon \widetilde{X} \rightarrow X$ \\
Alexander module $H_1(\widetilde{X})$ & (motivic) Alexander module $\underline{H}_1^{\A^1}\!(\widetilde{X})$
\end{tabular}
\end{center}
\begin{rem}\label{Motivic Hurewicz in deg 1}
Given a pointed $\A^1$-connected motivic space $X$, the Hurewicz map $h \colon \underline{\pi}_1^{\A^1}\!(X) \rightarrow \underline{H}_1^{\A^1}\!(X)$ is not fully understood. By Theorem 6.35 of \cite{MR2934577}, it is the initial morphism to a strictly $\A^1$-invariant sheaf, which is referred to as the weak Hurewicz theorem. Choudhury and Hogadi \cite{MR4442407} claimed that it is an epimorphism, but their argument is unfortunately known to be incomplete; see, for instance, \cite{zbMATH08029975}. The kernel of $h$ is also not yet known to be the commutator subsheaf, but all of this is expected to be true (potentially after $\A^1$-localizing again or for a slightly different notion of abelianization).
\end{rem}
Before we discuss $\KK^{\MW}_2$-coverings and the associated Alexander modules, let us explain why exactly $\KK^{\MW}_2$ is the replacement for $\Z$ in the motivic story, i.e. why there is an isomorphism
$$\underline{H}_1^{\A^1}\!(X) \cong \KK^{\MW}_2.$$
As mentioned before, we do not expect the reader to know much about about $\A^1$-homology. Any reader willing to believe that $\A^1$-homology works similarly enough as ordinary homology of spaces should, despite their potential unfamiliarity with this notion, still be able to understand most of our arguments.
\begin{prop}\label{Homology of Knot Complement}
The first $\A^1$-homology sheaf of the complement of any algebraic knot $i \colon \A^1 \hookrightarrow \A^3$ is isomorphic to $\KK^{\MW}_2$.
\end{prop}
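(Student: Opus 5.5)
The plan is to use the Morel--Voevodsky homotopy purity theorem together with the long exact sequence in $\A^1$-homology, exactly as one computes $H_1$ of a classical knot complement via Alexander duality or a Mayer--Vietoris/Thom argument. Write $Z = i(\A^1) \subset \A^3$, a smooth closed subscheme of codimension $2$, and let $X = \A^3 \setminus Z$. Homotopy purity gives a cofiber sequence of motivic spaces
$$X \longrightarrow \A^3 \longrightarrow \Th(\mathcal{N}_{Z/\A^3}),$$
where $\mathcal{N}_{Z/\A^3}$ is the normal bundle. By Quillen--Suslin (as already used in the proof of Theorem \ref{Knots are Complete Intersections}), $\mathcal{N}_{Z/\A^3}$ is a trivial rank $2$ bundle on $Z \cong \A^1$. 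Hence $\Th(\mathcal{N}_{Z/\A^3}) \simeq Z_+ \wedge \Th(\mathcal{O}^2) \simeq Z_+ \wedge \P^1 \wedge \P^1$. Since $Z \cong \A^1$ is contractible, this simplifies to $\P^1 \wedge \P^1 \simeq S^{4,2} = S^2 \wedge \G_{\rm m}^{\wedge 2}$.

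Next I take $\A^1$-homology of this cofiber sequence, which yields a long exact sequence
$$\underline{H}_2^{\A^1}(\A^3) \to \widetilde{\underline{H}}_2^{\A^1}(S^{4,2}) \to \underline{H}_1^{\A^1}(X) \to \underline{H}_1^{\A^1}(\A^3).$$
Because $\A^3$ is $\A^1$-contractible, its reduced homology vanishes. The connecting map is therefore an isomorphism
$$\underline{H}_1^{\A^1}(X) \cong \widetilde{\underline{H}}_2^{\A^1}(S^2 \wedge \G_{\rm m}^{\wedge 2}) \cong \widetilde{\underline{H}}_0^{\A^1}(\G_{\rm m}^{\wedge 2}),$$
where the last step uses the suspension isomorphism exactly as in Example \ref{Homotopy Sheaf Example}. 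Theorem \ref{Milnor--Witt K-theory as Free Object} with $q=2$ then identifies this with $\KK^{\MW}_2$. Here $\underline{H}_1^{\A^1}(X)$ is unreduced, but in degree $1$ reduced and unreduced homology agree, so this causes no trouble.

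An alternative route, which avoids purity and uses only the fact that $Z$ is a complete intersection, would compare $X$ with $\A^3 \setminus \{0\}$ via $(f,g) \colon X \to \A^2 \setminus \{0\}$. However, that map is not obviously an $\A^1$-homology isomorphism in degree $1$, so I would not pursue it.

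I expect the main obstacle to be justifying the identification of the Thom space. One has to check that the trivialization of the normal bundle gives $\Th(\mathcal{N}) \simeq Z_+ \wedge (\P^1)^{\wedge 2}$ and that contracting $Z$ is legitimate. The latter holds because $Z_+ \to S^0$ is an $\A^1$-equivalence and smashing preserves equivalences. One also has to confirm that homotopy purity applies, which requires $Z$ smooth and closed, and this holds since $i$ is a closed immersion with $i(\A^1)\cong\A^1$. The remaining steps, namely exactness of $\A^1$-homology on cofiber sequences and the suspension isomorphism, are standard facts from \cite{MR2934577} that I would cite directly.
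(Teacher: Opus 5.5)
Your proof is correct and is essentially the paper's argument: homotopy purity, the trivial normal bundle giving $\Th(\mathcal{N}_{\A^1/\A^3}) \simeq S^{4,2}$, and the long exact sequence in $\A^1$-homology with $\A^3$ contractible. The only difference is the last step. You identify $\widetilde{\underline{H}}_2^{\A^1}(S^{4,2})$ with $\KK^{\MW}_2$ directly, using the suspension isomorphism and Theorem \ref{Milnor--Witt K-theory as Free Object}. The paper instead first computes the unknot case, where the complement is $\simeq \A^2 \setminus \{0\} \simeq S^{3,2}$ (Example \ref{Homotopy Sheaf Example}), and then uses that the answer does not depend on the knot; your route is slightly more direct and relies on the same inputs.
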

\begin{proof}
First we consider the unknot $u \colon \A^1 \hookrightarrow \A^3$, i.e., an embedding of $\A^1$ as one of the three coordinate axes. As the projection $\A^3 \setminus u(\A^1) \rightarrow \A^2 \setminus \lbrace 0 \rbrace$ is a trivial line bundle, we see that the knot complement $X = \A^3 \setminus u(\A^1)$ is equivalent to $\A^2 \setminus \lbrace 0 \rbrace$, which by Example \ref{Punctured Affine Space as Sphere} is equivalent to the sphere $S^{3,2}$. Therefore, we are done by Example \ref{Homotopy Sheaf Example} together with Theorem \ref{Milnor--Witt K-theory as Free Object}.

Now let $i \colon \A^1 \hookrightarrow \A^3$ be any knot and let $j \colon \A^3 \setminus i(\A^1) \hookrightarrow \A^3$ be the embedding of the knot complement. We then consider the cofiber sequence
\begin{center}
\begin{tikzcd}
\A^3 \setminus i(\A^1) \arrow[r, hookrightarrow, "j"] & \A^3 \arrow[r] & \A^3/(\A^3 \setminus i(\A^1)) \arrow[r] & \Sigma \A^3 \setminus i(\A^1)
\end{tikzcd}
\end{center}
in $\Spc(k)$. Homotopy purity (Theorem 2.23 of \cite{MR1813224}) tells us that the cofiber $\A^3 / (\A^3 \setminus i(\A^1))$ is equivalent to the Thom space $\Th(\mathcal{N}_{\A^1/\A^3})$ of the normal bundle associated with the embedding $i \colon \A^1 \hookrightarrow \A^3$. By the Quillen--Suslin theorem (\cite{MR427303} and \cite{MR469905}) or the fact that $\A^1$ is the spectrum of a principal ideal domain, we know that the normal bundle $\mathcal{N}_{\A^1\!/\A^3}$ is a trivial bundle, and Thom spaces of trivial bundles are not difficult to compute. Indeed,
$$\Th(\mathcal{O}^n_X) = (\A^n \times \,X)/(\A^n \setminus \lbrace 0 \rbrace \times X) \simeq \A^n / (\A^n \setminus \lbrace 0 \rbrace) \wedge X_+ \simeq S^{2n,n} \wedge X_+$$
for any smooth scheme $X$ and trivial rank $n$ bundle $\mathcal{O}_X^n$. All in all, we thus have 
$$\cofib(j) = \A^3/(\A^3 \setminus i(\A^1)) \simeq \Th(\mathcal{N}_{\A^1\!/\A^3}) \simeq \Th(\mathcal{O}^2_{\A^1}) \simeq S^{4,2} \wedge \A^1_+ \simeq S^{4,2} \wedge S^0 \simeq S^{4,2}.$$
Using this, if we now consider the long exact sequence in reduced homology 
\begin{center}
\begin{tikzcd}
\dotsc \arrow[r] & \widetilde{\phantom{H}}\hspace{-9.5pt}\underline{H}_2^{\A^1}\!(\A^3) \arrow[r] & \widetilde{\phantom{H}}\hspace{-9.5pt}\underline{H}_2^{\A^1}\!(S^{4,2}) \arrow[r] & \widetilde{\phantom{H}}\hspace{-9.5pt}\underline{H}_1^{\A^1}\!(\A^3 \setminus i(\A^1)) \arrow[r] & \widetilde{\phantom{H}}\hspace{-9.5pt}\underline{H}_1^{\A^1}\!(\A^3) \arrow[r] & \dotsc
\end{tikzcd}
\end{center}
associated to the above cofiber sequence, we obtain $\widetilde{\phantom{H}}\hspace{-9.5pt}\underline{H}_2^{\A^1}\!(S^{4,2}) \cong \widetilde{\phantom{H}}\hspace{-9.5pt}\underline{H}_1^{\A^1}\!(\A^3 \setminus i(\A^1))$ since affine spaces are contractible and hence have no reduced homology. We just observed that the first homology of the knot complement is independent of the knot. As we know that it is given by $\KK^{\MW}_2$ for the unknot, it is thus $\KK^{\MW}_2$ for all knots.
\end{proof}
Now we construct the desired $\A^1$-coverings of algebraic knot complements.
\begin{con}\label{Construction of Milnor--Witt K_2 coverings}
We first construct a $\KK^{\MW}_2$-covering for the unknot $u \colon \A^1 \hookrightarrow \A^3$. As observed in the proof of Proposition \ref{Homology of Knot Complement}, its complement is equivalent to $\A^2 \setminus \lbrace 0 \rbrace$. Now we consider the standard open cover 
$$\A^2 \setminus \lbrace 0 \rbrace = (\A^1 \times \G_{\rm m}) \cup (\G_{\rm m} \cup \A^1)$$
with intersection $\G_{\rm m} \times \G_{\rm m}$, which gives us a pushout square
\begin{center}
\begin{tikzcd}
\G_{\rm m} \times \G_{\rm m} \arrow[r, hookrightarrow] \arrow[d, hookrightarrow] & \G_{\rm m} \times \A^1 \arrow[d, hookrightarrow] \\
\A^1 \times \G_{\rm m} \arrow[r, hookrightarrow] & \A^2 \wo \lbrace 0 \rbrace. \arrow[ul, phantom, "\ulcorner", very near start]
\end{tikzcd}
\end{center}
By the previous lemma, the motivic spaces $\A^1 \times \G_{\rm m}$ and $\G_{\rm m} \times \A^1$ are $\A^1$-discrete, so any covering of $\A^2 \setminus \lbrace 0 \rbrace$, including the one we want to construct, must trivialize on $\A^1 \times \G_{\rm m}$ and $\G_{\rm m} \times \A^1$. Therefore, we consider two trivial $\KK^{\MW}_2$-coverings
$$\A^1 \times \G_{\rm m} \times \KK^{\MW}_2 \rightarrow \A^1 \times \G_{\rm m} \hspace{5pt}\text{ and }\hspace{5pt} \G_{\rm m} \times \A^1 \times \KK^{\MW}_2\rightarrow \G_{\rm m} \times \A^1$$
and glue these over $\G_{\rm m} \times \G_{\rm m}$. For the gluing map we choose the universal symbol map
$$\G_{\rm m} \times \G_{\rm m} \times \KK^{\MW}_2 \rightarrow \G_{\rm m} \times \G_{\rm m} \times \KK^{\MW}_2, (a,b,x) \mapsto (a,b,x - [a][b]).$$
This gives us the desired $\KK^{\MW}_2$-covering $\reallywidetilde{\A^2 \setminus \lbrace 0 \rbrace} \rightarrow \A^2 \setminus \lbrace 0 \rbrace \simeq \A^3 \setminus u(\A^1)$.

Now let $i \colon \A^1 \hookrightarrow \A^3$ be an arbitrary algebraic knot. By Theorem \ref{Knots are Complete Intersections}, there exist polynomials $f,g \in k[x,y,z]$ with $i(\A^1) = V(f,g)$. This gives rise to the map
$$(f,g) \colon \A^3 \setminus i(\A^1) = \A^3 \setminus V(f,g) \rightarrow \A^2 \setminus \lbrace 0 \rbrace$$
and we can consider the motivic space $\reallywidetilde{\A^3 \setminus i(\A^1)}$ defined via the pullback square
\begin{center}
\begin{tikzcd}
\reallywidetilde{\A^3 \setminus i(\A^1)} \arrow[r] \arrow[d] \arrow[dr, phantom, "\lrcorner", very near start] & \reallywidetilde{\A^2 \setminus \lbrace 0 \rbrace} \arrow[d] \\
\A^3 \setminus i(\A^1) \arrow[r, "(f \, \komma g)"] & \A^2 \setminus \lbrace 0 \rbrace,
\end{tikzcd}
\end{center}
where the right vertical map is the $\KK^{\MW}_2$-covering that we already constructed. As pullbacks of coverings are coverings, this gives us the desired $\KK^{\MW}_2$-covering of $\A^3 \setminus i(\A^1)$. In Theorem \ref{Coverings are Universal} we will see that these do not depend on the choice of the polynomials $f$ and $g$, which is why we allow ourselves to drop this potential dependence from the notation.
\end{con}
From the construction it is certainly not clear that these coverings are the universal abelian ones. Actually, since the Hurewicz map in degree $1$ is not yet known to be the abelianization (see also Remark \ref{Motivic Hurewicz in deg 1}), we define this notion as follows.
\begin{de}
Let $X$ be an $\A^1$-connected space over $k$. An abelian covering $p \colon \widetilde{X} \to X$ is called universal, if $\pi_1(\widetilde{X})$ is the kernel of the Hurewicz map $h \colon \underline{\pi}_1^{\A^1}\!(X) \to \underline{H}_1^{\A^1}\!(X)$.
\end{de}
Here we use that the short exact sequence 
\begin{center}
\begin{tikzcd}
0 \arrow[r] & \underline{\pi}_1^{\A^1}\!(\widetilde{X}) \arrow[r, "p_*"] & \underline{\pi}_1^{\A^1}\!(X) \arrow[r] & \underline{\Deck}_X(\widetilde{X}) \arrow[r] & 0
\end{tikzcd}
\end{center}
associated to any abelian covering $p \colon \widetilde{X} \to X$ identifies $\underline{\pi}_1^{\A^1}\!(\widetilde{X})$ with the subsheaf $p_*\underline{\pi}_1^{\A^1}\!(\widetilde{X}) \hookrightarrow \underline{\pi}_1^{\A^1}\!(X)$. If the Hurewicz map is the abelianization, as is expected, an abelian covering is universal if it corresponds to the commutator subsheaf.

Now that we know what exactly we want to verify, let us start by dealing with the connectivity of the total spaces and the base spaces. To do so, we prove that the Hurewicz map is an epimorphism for our objects of interest, at least under additional assumptions:
\begin{thm}\label{Hurewicz}
If the base field $k$ is algebraically closed, the Hurewicz map $h \colon \underline{\pi}_1^{\A^1}\!(X)\to \underline{H}_1^{\A^1}\!(X)$ is an epimorphism for all algebraic knot complements $X$.
\end{thm}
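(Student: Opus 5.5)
The plan is to produce an explicit family of sections of $\underline{\pi}_1^{\A^1}\!(X)$ whose Hurewicz images generate $\underline{H}_1^{\A^1}\!(X)\cong\KK^{\MW}_2$ (Proposition \ref{Homology of Knot Complement}). Because $\KK^{\MW}_2$ is generated as a sheaf by the symbols $[a][b]$, with $a,b$ units on stalks (equivalently, on finitely generated field extensions $L/k$ by Morel's unramified description), it suffices to show that every symbol $[a][b]\in \KK^{\MW}_2(L)$ lies in the image of $h$ on sections over (essentially) smooth henselian local schemes and fields. The generator should come from a small ``meridian'' torus linking the knot.

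The first step is to construct that meridian. Choose a closed point $x=i(t_0)$ on the knot; since $k$ is algebraically closed, $x$ is $k$-rational. Next choose a transversal smooth surface through $x$, for instance an affine plane $H\subset\A^3$ meeting $i(\A^1)$ transversally in $x$ and possibly in finitely many other points. Then pass to a Zariski neighbourhood $V\subset H$ of $x$ that contains no other point of the knot and on which suitable local coordinates $(u,v)$ at $x$ give an étale map $V\to\A^2$. The composite $V\setminus\{x\}\hookrightarrow X$ produces a map, and Morel--Voevodsky excision/homotopy purity identifies the cofiber behaviour near $x$ with that of $\A^2\setminus\{0\}\simeq S^{3,2}$. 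Concretely, the cofiber sequence argument in Proposition \ref{Homology of Knot Complement} shows that the boundary $\widetilde{\underline H}_2^{\A^1}(S^{4,2})\to \underline H_1^{\A^1}(X)$ is an isomorphism. Purity for $x\in V$ compared with $i(\A^1)\subset\A^3$, via the normal bundle restricted to $x$ together with the inclusion $S^{4,2}=\Th(\mathcal N|_x)\to \Th(\mathcal N)\simeq S^{4,2}\wedge\A^1_+$, which is an equivalence, shows that $V\setminus\{x\}\to X$ induces an isomorphism on $\underline H_1^{\A^1}$.

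The second step is to treat the local model. I would first check the statement for $V\setminus\{x\}$ replaced by $\A^2\setminus\{0\}$. There, $\underline\pi_1^{\A^1}(\A^2\setminus\{0\})\cong\KK^{\MW}_2$ and $h$ is an isomorphism (Example \ref{Homotopy Sheaf Example}). This gives an element $\alpha\in\underline\pi_1^{\A^1}(\A^2\setminus\{0\})$ realising the symbols. The task is then to transport $\alpha$ into $X$. Since $V\setminus\{x\}\to\A^2\setminus\{0\}$ is not itself an equivalence, I would instead use the Nisnevich square: $V\to\A^2$ étale with $x\mapsto0$ an isomorphism on the closed point, so $V/(V\setminus x)\simeq\A^2/(\A^2\setminus0)$. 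The resulting map $\Sigma(V\setminus x)\to\Sigma(\A^2\setminus 0)$ is an equivalence after suspension, though not necessarily before. To get actual $\pi_1$ classes, I would use the fact that $\A^2\setminus0$ is $\A^1$-connected, so that $\pi_1$ of both is computed by the weak Hurewicz/Morel's theory in terms of the first nonvanishing homology. Then, by Morel's $\A^1$-simplicial suspension theorem in low degrees (or Freudenthal, given that both are $\A^1$-connected and have trivial $\underline\pi_0$), $\underline\pi_1^{\A^1}(V\setminus x)\to\underline H_1^{\A^1}(V\setminus x)\cong\KK^{\MW}_2$ is surjective.

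Finally, naturality of $h$ applied to $V\setminus\{x\}\to X$ gives surjectivity for $X$: the composite $\underline\pi_1(V\setminus x)\to\underline H_1(V\setminus x)\xrightarrow{\cong}\underline H_1(X)$ is onto and factors through $h_X$. I expect the main obstacle to be the middle step, namely producing genuine $\A^1$-$\pi_1$ classes on the local punctured neighbourhood $V\setminus\{x\}$, because the known comparisons (excision, purity) only hold after suspension or in homology. If the Freudenthal-type argument fails, my fallback is to modify the choice of $H$ and coordinates so that $V\setminus\{x\}$ is genuinely $\A^1$-equivalent to $\A^2\setminus\{0\}$. For this I would use the algebraic closedness of $k$ to pick $H$ and a polynomial change of coordinates in which the knot meets a whole plane only at $x$, transversally. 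The map $\A^2\setminus\{0\}\cong H\setminus\{x\}\hookrightarrow X$ would then directly supply the needed classes.
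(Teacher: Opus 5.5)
The gap is in your second step, which is where the entire difficulty sits.

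First, the comparisons you rely on fail as soon as $V\neq H$. The case $V=H$ is only possible in the situation of your fallback. A proper open $V\subsetneq H\cong\A^2$ is never $\A^1$-acyclic:
\begin{itemize}
\item if $H\setminus V$ contains a curve $\{g=0\}$, the unit $g$ gives a class in $\KK^{\MW}_1(V\setminus\{x\})$ not coming from $k$;
\item if $H\setminus V$ consists of $m>0$ points, then $\underline{H}_1^{\A^1}(V)\cong(\KK^{\MW}_2)^{\oplus m}$.
\end{itemize}
So Nisnevich excision identifies $V/(V\setminus\{x\})\simeq\A^2/(\A^2\setminus\{0\})$, but $\Sigma(V\setminus\{x\})\to\Sigma(\A^2\setminus\{0\})$ is not an equivalence. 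Likewise $\underline{H}_1^{\A^1}(V\setminus\{x\})\cong\KK^{\MW}_2\oplus\underline{H}_1^{\A^1}(V)$ only surjects onto $\underline{H}_1^{\A^1}(X)$; it is not an isomorphism.

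Second, and decisively, nothing you cite produces $\underline{\pi}_1^{\A^1}$-classes. The weak Hurewicz theorem only says $h$ is initial among maps to strictly $\A^1$-invariant sheaves. Morel's $\A^1$-simplicial suspension theorem, like his Hurewicz theorem, is only available for $\A^1$-simply connected spaces. For an $\A^1$-connected $Y$, the degree-one suspension statement is that $\underline{\pi}_1^{\A^1}(Y)\to\underline{\pi}_2^{\A^1}(\Sigma Y)\cong\underline{H}_1^{\A^1}(Y)$ is onto. That \emph{is} Hurewicz surjectivity for $Y$, i.e.\ the open problem of Remark~\ref{Motivic Hurewicz in deg 1}; if you had it, you could apply it to $X$ directly. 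So this step assumes what is to be proved, for a space $V\setminus\{x\}$ that is no simpler than $X$.

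Your fallback does not exist for any knot of interest. Over $k=k^{\alg}$, a plane $H=\{F=0\}$ with $F$ a variable meets $i(\A^1)$ in $\Spec k[t]/(F\circ i)$. A single transversal intersection point therefore forces $F\circ i=c(t-t_0)$ with $c\neq 0$. Completing $F$ to coordinates $(F,G,K)$ then exhibits the knot as the graph $t\mapsto(c(t-t_0),G(i(t)),K(i(t)))$, which a triangular automorphism rectifies. Topologically, a far-away plane meets a long knot once, but algebraically it meets it in $\deg(F\circ i)$ points; for example, $x=c$ meets the trefoil three times over $\C$.

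What you are missing is an unstable statement about $\underline{\pi}_1^{\A^1}$ itself, and a way to treat all intersection points at once instead of isolating one. The paper proceeds as follows.
\begin{itemize}
\item It keeps the whole section $H\setminus(H\cap i(\A^1))\cong\A^2\setminus\{p_1,\dots,p_n\}$, which is an open subscheme of $X$.
\item It gets genuine $\underline{\pi}_1^{\A^1}$-classes from the Asok--Doran excision theorem (removing rational points from $\A^2\setminus\{p_j\}$ is surjective on $\underline{\pi}_1^{\A^1}$), together with $\underline{\pi}_1^{\A^1}(\A^2\setminus\{0\})\cong\KK^{\MW}_2$.
\item It shows that on $\underline{H}_1^{\A^1}$ the composite with $(f,g)\colon X\to\A^2\setminus\{0\}$ is $\sum_i\deg_i\cdot\pr_i$. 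Each local $\A^1$-degree is a unit in $\GW(k)$ by transversality and the EKL form (Propositions~\ref{prop:hurewicz-plane-minus-pts}, \ref{prop:a1-local-degree} and Theorem~\ref{thm:hurewicz-knot-complement}).
\end{itemize}
If you go this way, note that the last step needs the Hurewicz image for $\A^2\setminus\{p_1,\dots,p_n\}$ to be all of $(\KK^{\MW}_2)^{\oplus n}$. Surjectivity onto each summand separately, which is what excision yields point by point, does not imply this.
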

Although it is non-trivial, we will postpone the proof for now as we believe that it might distract the reader from the core arguments of this article. Instead, we present the proof in Appendix \ref{Appendix B}. For all the explicit algebraic knots from Example \ref{Knot picture} (in fact, for all algebraic knots defined by Chebyshev polynomials), this theorem is also true for when the base field is $\R$; see Remark \ref{rem:assumptions for hurewicz}.

\begin{prop}\label{knot complements connected}
Let $p \colon \widetilde{X} \rightarrow X$ be the $\KK^{\MW}_2$-covering of an algebraic knot complement $X$ from Construction \ref{Construction of Milnor--Witt K_2 coverings}. Then $X$ is $\A^1$-connected. If the Hurewicz map $h \colon \underline{\pi}_1^{\A^1}\!(X)\to \underline{H}_1^{\A^1}\!(X)$ is an epimorphism, then $\widetilde{X}$ is also $\A^1$-connected.
\end{prop}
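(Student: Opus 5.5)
The plan splits into two parts: $\A^1$-connectedness of $X$, and then of $\widetilde{X}$ via the long exact homotopy sequence of the covering.

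\emph{Connectedness of $X$.} I would use Morel's unstable $\A^1$-connectivity theorem together with a codimension argument. Concretely, $X=\A^3\setminus i(\A^1)$ is an open subscheme of $\A^3$ whose complement has codimension $2$. Removing a closed subscheme of codimension at least $2$ from an $\A^1$-connected smooth scheme keeps it $\A^1$-connected; this is a result of Morel, see the $\A^1$-connectivity results in \cite{MR2934577} on complements of codimension $\geq 2$. Alternatively one can argue directly. Any two $k$-points of $X$ (or points over a finitely generated separable extension $F$) can be joined by a chain of affine lines $\A^1_F\to \A^3_F$ avoiding $i(\A^1)$, since a generic line through a given point misses a curve in $\A^3$. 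The sheaf $\underline{\pi}_0^{\A^1}(X)$ is then trivial on fields, and hence trivial, by Morel's unramifiedness result for $\underline{\pi}_0^{\A^1}$ of smooth schemes' stalks. I would simply cite the codimension-$2$ statement.

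\emph{Connectedness of $\widetilde{X}$.} The covering $p$ is a pullback of a $\KK^{\MW}_2$-torsor, so we get a fiber sequence $\KK^{\MW}_2\to\widetilde{X}\to X$ in $\Spc(k)$, with $\KK^{\MW}_2$ discrete by Corollary \ref{KMW discrete}. The long exact sequence of $\A^1$-homotopy sheaves ends in
\[
\underline{\pi}_1^{\A^1}(X)\xrightarrow{\ \partial\ }\underline{\pi}_0^{\A^1}(\KK^{\MW}_2)=\KK^{\MW}_2\to\underline{\pi}_0^{\A^1}(\widetilde{X})\to\underline{\pi}_0^{\A^1}(X)=*,
\]
so $\widetilde{X}$ is $\A^1$-connected if and only if $\partial$ is an epimorphism. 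This requires exactness of the fiber sequence after $\A^1$-localization. That holds because $\KK^{\MW}_2$ is strictly $\A^1$-invariant, so that $B\KK^{\MW}_2$ is $\A^1$-local and the torsor is classified by a map $X\to B\KK^{\MW}_2$ (Morel's theory of $\A^1$-coverings).

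The key step is to identify $\partial$ with the Hurewicz map $h$, up to an automorphism of $\KK^{\MW}_2$. Since $\KK^{\MW}_2$ is strictly $\A^1$-invariant, the weak Hurewicz theorem (Remark \ref{Motivic Hurewicz in deg 1}) says that $\partial$ factors uniquely as $\phi\circ h$ with $\phi\colon\underline{H}_1^{\A^1}(X)\cong\KK^{\MW}_2\to\KK^{\MW}_2$. So it suffices to show that $\phi$ is an isomorphism. By naturality of the construction along $(f,g)\colon X\to\A^2\setminus\{0\}$, it is enough to check two things:
\begin{enumerate}
\item[(a)] the induced map $(f,g)_*$ on $\underline{H}_1^{\A^1}$ is an isomorphism;
\item[(b)] $\phi$ is an isomorphism for the unknot.
\end{enumerate}
For (a), compare the cofiber sequences from the proof of Proposition \ref{Homology of Knot Complement}. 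The map $(f,g)$ extends to $\A^3\to\A^2$ and induces a map of Thom spaces $\A^3/X\to \A^2/(\A^2\setminus 0)$. Since $(f,g)$ generates the ideal of $i(\A^1)$, this is, via homotopy purity, the trivialization $\Th(\mathcal N)\simeq S^{4,2}\wedge \A^1_+\to S^{4,2}$, which is an equivalence. For (b), on $\A^2\setminus\{0\}\simeq S^{3,2}=\Sigma\G_{\rm m}^{\wedge2}$ the torsor is glued by the universal symbol. Its connecting map $\underline{\pi}_1^{\A^1}(S^{3,2})\cong\KK^{\MW}_2\to\KK^{\MW}_2$ is the adjoint of the clutching map $\G_{\rm m}\wedge\G_{\rm m}\to\KK^{\MW}_2$, $(a,b)\mapsto -[a][b]$. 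By Theorem \ref{Milnor--Witt K-theory as Free Object} this corresponds to $-\mathrm{id}$, an isomorphism.

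Thus $\partial=\phi\circ h$ with $\phi$ an isomorphism, so $\partial$ is epi exactly when $h$ is, which gives the claim. The main obstacle is the bookkeeping in (b) and in the compatibility of $\partial$ with the Mayer--Vietoris/suspension identification. I would handle this by computing $\partial$ for $\Sigma Y$ with a torsor clutched by $c\colon Y\to A$, which gives $\partial=$ the adjoint of $c$ up to sign.
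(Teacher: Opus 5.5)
Your proof is correct, but after the first steps it takes a different route from the paper. The shared part: connectedness of $X$ is the same codimension-$2$ statement (the paper cites it as Asok--Doran's homotopy excision \cite[Theorem~4.1]{asok:doran:excision}), and your step (a) is exactly the paper's purity argument showing that $\Sigma(f,g)$ is an equivalence.

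\textbf{The paper's route.} It only extracts from (a) that $(f,g)_*\colon\underline{\pi}_1^{\A^1}(X)\to\underline{\pi}_1^{\A^1}(\A^2\setminus\{0\})$ is an epimorphism. This uses the Hurewicz square and the isomorphism of Example~\ref{Homotopy Sheaf Example}. It then reads off $\underline{\pi}_0^{\A^1}(\widetilde X)=*$ from the Mayer--Vietoris sequence of the pullback square defining $\widetilde X$. That last step needs the covering of $\A^2\setminus\{0\}$ to be $\A^1$-connected, which the paper only establishes afterwards, in Lemma~\ref{Unknot is Univ Covering}.

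\textbf{Your route.} You use the long exact sequence of $p$ itself and identify its boundary map. The weak Hurewicz theorem gives $\partial=\phi\circ h$, and naturality along $(f,g)$ gives $\phi=\phi_0\circ(f,g)_*$. Your clutching computation (b) is precisely the input the paper defers to Lemma~\ref{Unknot is Univ Covering}, so your version makes that dependence explicit.

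\textbf{What each buys.} You get more than the statement. Since $\phi$ is an isomorphism, $\widetilde X$ is $\A^1$-connected if and only if $h$ is an epimorphism. The same identity, combined with $\underline{\pi}_1^{\A^1}(\KK^{\MW}_2)=0$, gives $p_*\underline{\pi}_1^{\A^1}(\widetilde X)=\ker\partial=\ker h$ directly. That is essentially Theorem~\ref{Coverings are Universal}, without the detour through $\End(\KK^{\MW}_2)\cong K^{\MW}_0(k)$ and projectivity. The price is that you need the universal property of $h$ and careful tracking of boundary maps and signs for the clutching. That clutching lives on the join of two copies of $\G_{\rm m}$, which is equivalent to $\Sigma\G_{\rm m}^{\wedge 2}$. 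The paper's Mayer--Vietoris argument avoids all of this at this point.
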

\begin{proof}
Since any algebraic knot $i \colon \A^1 \hookrightarrow \A^3$ is of codimension $2$, Asok and Doran's homotopy excision \cite[Theorem~4.1]{asok:doran:excision} yields that the map
$$\underline{\pi}_0^{\A^1}\!(X) \to \underline{\pi}_0^{\A^1}\!(\A^3) \cong 0$$
induced by the inclusion is an isomorphism. In other words, $X$ is $\A^1$-connected.

\hfill

Now we deal with the $\A^1$-connectivity of $\widetilde{X}$. Given an algebraic knot $i \colon \A^1 \hookrightarrow \A^3$, Theorem \ref{Knots are Complete Intersections} lets us find polynomials $f,g \in k[x,y,z]$ with $i(\A^1) = V(f,g)$, from which we obtain the map
$$\varphi = (f,g) \colon X = \A^3 \setminus i(\A^1) = \A^3 \setminus V(f,g) \rightarrow \A^2 \setminus \lbrace 0 \rbrace$$
as in Construction \ref{Construction of Milnor--Witt K_2 coverings}. This allows us to consider the pullback square
\begin{center}
\begin{tikzcd}
X \arrow[r, hookrightarrow, "j"] \arrow[d, swap, "\varphi"] & \A^3 \arrow[d,"(f \, \komma g)"] \\
\A^2\setminus\lbrace 0\rbrace \arrow[r, hookrightarrow, "j_0"] & \A^2.
\arrow[ul, phantom, "\lrcorner", very near end]
\end{tikzcd}
\end{center}
Using homotopy purity (Theorem 2.23 of \cite{MR1813224}), its naturality (for instance, Theorem 3.23 of \cite{MR3570135}) and $\A^1$-invariance, we get a commutative diagram
\begin{center}
\begin{tikzcd}
\A^3 / X \arrow[r, "\simeq"] \arrow[d, swap, "(f \, \komma g)"] & \Th(\mathcal{N}_{\A^1\!/\!\A^3}) \arrow[d] \arrow[r, "\simeq"] \arrow[d] & \Th(\mathcal{O}^2_{\A^1}) \arrow[r, "\simeq"] \arrow[d,] & S^{4,2} \wedge \A^1_+ \arrow[d,"\simeq"]\\
\A^2 / (\A^2 \setminus \lbrace 0 \rbrace) \arrow[r, "\simeq"] & \Th(\mathcal{N}_{\lbrace 0 \rbrace /\!\A^2}) \arrow[r, "\simeq"] & \Th(\mathcal{O}^2_{\lbrace 0 \rbrace}) \arrow[r, "\simeq"] & S^{4,2} \wedge \lbrace 0 \rbrace_+
\end{tikzcd}
\end{center}
which yields an equivalence between the cofibers of $j$ and $j_0$. Here we also use Proposition \ref{Knots are Complete Intersections}, i.e., that algebraic knots are complete intersections, to get commutativity of the middle square, and we once again make use of the fact that it is simple to compute Thom spaces of trivial bundles (as in the proof of Proposition \ref{Homology of Knot Complement}). Therefore, we obtain a morphism of cofiber/Puppe sequences
\begin{center}
\begin{tikzcd}
X \arrow[r, hookrightarrow, "j"] \arrow[d, swap, "\varphi"] & \A^3 \arrow[d, swap, "(f \, \komma g)"] \arrow[r] \arrow[d] & \A^3 / X \arrow[r] \arrow[d,"\simeq"] & \Sigma X  \arrow[r] \arrow[d,"\Sigma\varphi"] & \Sigma \A^3 \arrow[d,"\Sigma(f \, \komma g)"]\\
\A^2 \setminus \lbrace 0 \rbrace \arrow[r, hookrightarrow, "j_0"] & \A^2 \arrow[r] & \A^2 / (\A^2 \setminus \lbrace 0 \rbrace) \arrow[r] & \Sigma \! \A^2 \setminus \lbrace 0 \rbrace  \arrow[r] & \Sigma \! \A^2.
\end{tikzcd}
\end{center}
Since affine spaces and hence also their suspensions are contractible, the maps $\A^3 / X \to \Sigma X$ and $\A^2 / (\A^2 \setminus \lbrace 0 \rbrace) \to \Sigma \! \A^2 \setminus \lbrace 0 \rbrace$ are equivalences, so that consequently also the map $\Sigma \varphi \colon \Sigma X \to \Sigma \! \A^2 \setminus \lbrace 0 \rbrace$ is an equivalence. The suspension isomorphism then yields an isomorphism $\varphi_*\colon H_1^{\A^1}\!(X)\to H_1^{\A^1}\!(\A^2\setminus\lbrace 0\rbrace)$. From the functoriality of the Hurewicz morphism, we get a commutative diagram
\begin{center}
\begin{tikzcd}
\underline{\pi}_1^{\A^1}\!(X) \arrow[r,"\varphi_*"] \arrow[d, swap, "h"] & \underline{\pi}_1^{\A^1}\!(\A^2\setminus\lbrace 0\rbrace) \arrow[d,"\cong"] \\
\underline{H}_1^{\A^1}\!(X) \arrow[r,"\varphi_*"]  & \underline{H}_1^{\A^1}\!(\A^2\setminus \lbrace 0\rbrace)
\end{tikzcd}
\end{center}
where the vertical isomorphism on the right is the one from Example \ref{Homotopy Sheaf Example}. If the Hurewicz map $h \colon \underline{\pi}_1^{\A^1}\!(X)\to \underline{H}_1^{\A^1}\!(X)$ is an epimorphism then so is  $\varphi_*\colon\underline{\pi}_1^{\A^1}\!(X) \to \underline{\pi}_1^{\A^1}\!(\A^2\setminus\lbrace 0\rbrace)$ as a composition of epimorphisms. If we now consider the Mayer-Vietoris sequence 
\begin{center}
\begin{tikzcd}
\dotsc \arrow[r] & \underline{\pi}_1^{\A^1}\!(\widetilde{X}) \arrow[r] & \underline{\pi}_1^{\A^1}\!(\reallywidetilde{\A^2 \setminus \lbrace 0 \rbrace}) \times \underline{\pi}_1^{\A^1}\!(X) \arrow[r] & \underline{\pi}_1^{\A^1}\!(\A^2 \setminus \lbrace 0 \rbrace) \arrow[r] & \underline{\pi}_0^{\A^1}\!(\widetilde{X}) \arrow[r] & \dotsc.
\end{tikzcd}
\end{center}
associated with the pullback square
\begin{center}
\begin{tikzcd}
\widetilde{X} \arrow[r] \arrow[d] \arrow[dr, phantom, "\lrcorner", very near start] & \reallywidetilde{\A^2 \setminus \lbrace 0 \rbrace} \arrow[d] \\
X \arrow[r] & \A^2 \setminus \lbrace 0 \rbrace
\end{tikzcd}
\end{center}
defining $p$, then the exactness together with $\varphi_*$ being an epi yields the claim.
\end{proof}
As could be expected, the covering associated with the unknot is not only the universal abelian covering, but even the universal one:
\begin{lem}\label{Unknot is Univ Covering}
The $\KK^{\MW}_2$-covering of the complement of the unknot from Construction \ref{Construction of Milnor--Witt K_2 coverings} is universal.
\end{lem}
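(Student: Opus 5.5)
We need to show that the total space $\reallywidetilde{\A^2 \setminus \lbrace 0 \rbrace}$ is $\A^1$-simply connected, i.e. $\A^1$-connected with $\underline{\pi}_1^{\A^1}$ trivial. Since the Hurewicz map for the unknot complement $\A^2\setminus\lbrace 0\rbrace$ is an isomorphism (Example \ref{Homotopy Sheaf Example}), its kernel is trivial. So universality of this abelian covering in the sense of the definition is equivalent to universality in the strong sense, and it suffices to prove $\underline{\pi}_1^{\A^1}\!(\reallywidetilde{\A^2 \setminus \lbrace 0 \rbrace}) = 0$.

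The plan is to use the long exact homotopy sequence of the $\KK^{\MW}_2$-covering
$$\KK^{\MW}_2 \to \reallywidetilde{\A^2 \setminus \lbrace 0 \rbrace} \xrightarrow{p} \A^2 \setminus \lbrace 0 \rbrace.$$
By Corollary \ref{KMW discrete} and Lemma \ref{Homotopy of Discrete Motivic Spaces}, the fiber has vanishing higher homotopy sheaves and $\underline{\pi}_0^{\A^1}\!(\KK^{\MW}_2)=\KK^{\MW}_2$. The relevant segment is therefore
$$0 \to \underline{\pi}_1^{\A^1}\!(\widetilde{X}) \to \underline{\pi}_1^{\A^1}\!(\A^2\setminus\lbrace 0\rbrace) \xrightarrow{\partial} \KK^{\MW}_2 \to \underline{\pi}_0^{\A^1}\!(\widetilde{X}) \to 0,$$
where the final $0$ is $\underline{\pi}_0^{\A^1}$ of the $\A^1$-connected base. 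Both conclusions, simple connectivity and $\A^1$-connectivity of $\widetilde{X}$, follow once we show that $\partial$ is an isomorphism. We already know its source is $\KK^{\MW}_2$, by Example \ref{Homotopy Sheaf Example} and Theorem \ref{Milnor--Witt K-theory as Free Object}.

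The key step is to identify $\partial$ using the gluing description. Write $\A^2\setminus\lbrace 0\rbrace \simeq \Sigma(\G_{\rm m}\wedge \G_{\rm m})$ via the pushout square of the standard cover, whose two corners $\A^1\times\G_{\rm m}$ and $\G_{\rm m}\times\A^1$ are contractible onto $\G_{\rm m}$. A $\KK^{\MW}_2$-torsor over such a pushout, trivialized on the two pieces, is classified by its clutching function $\G_{\rm m}\times\G_{\rm m}\to \KK^{\MW}_2$. In adjoint terms, the classifying map $\A^2\setminus\lbrace 0\rbrace \to B\KK^{\MW}_2$ corresponds under suspension/loop adjunction to the clutching map $(a,b)\mapsto [a][b]$. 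This map factors through $\G_{\rm m}\wedge\G_{\rm m}$ because $[1][b]=[a][1]=0$, which lets us reduce from the product to the smash; some care is needed with the pieces $\G_{\rm m}$ (the clutching map is trivial on the wedge summands).

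The boundary map $\partial$ is then the composite
$$\underline{\pi}_1^{\A^1}\!(\Sigma\G_{\rm m}^{\wedge 2}) \to \underline{\pi}_1^{\A^1}\!(B\KK^{\MW}_2)=\KK^{\MW}_2.$$
Via Hurewicz and strict $\A^1$-invariance of $\KK^{\MW}_2$ (so that $B\KK^{\MW}_2$ is $\A^1$-local), this composite is the map $\widetilde{\underline{H}}_0^{\A^1}\!(\G_{\rm m}^{\wedge 2}) \to \KK^{\MW}_2$ induced by the universal symbol. By Theorem \ref{Milnor--Witt K-theory as Free Object}, that map is an isomorphism (up to a sign coming from the $x-[a][b]$ convention, which is harmless).

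The main obstacle is making this identification rigorous. Specifically, one must check that a covering glued by the clutching function has classifying map equal to the suspension adjoint of the clutching function, and that the connecting homomorphism of the fiber sequence agrees with the map on $\underline{\pi}_1$ induced by the classifying map into $B\KK^{\MW}_2$. The first thing to try is to compare the Mayer–Vietoris (van Kampen–type) description of $\underline{\pi}_1^{\A^1}$ of the pushout with the fiber sequence directly, working on sections over fields and using the free-object universal property to conclude.
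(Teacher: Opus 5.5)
Your proposal is correct and follows essentially the same route as the paper. Both arguments identify the class of the glued $\KK^{\MW}_2$-torsor, via the suspension--loop adjunction for $\A^2\setminus\{0\}\simeq\Sigma\Gm\wedge\Gm$, with the universal symbol $\Gm\wedge\Gm\to\KK^{\MW}_2$; its classifying map $\A^2\setminus\{0\}\to B\KK^{\MW}_2$ then induces Morel's isomorphism on $\underline{\pi}_1^{\A^1}$ by Theorem~\ref{Milnor--Witt K-theory as Free Object}. Your long exact sequence argument with the boundary map $\partial$ makes explicit the final step, namely $\A^1$-connectivity and vanishing of $\underline{\pi}_1^{\A^1}$ for the total space, which the paper compresses into ``consequently''.
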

\begin{proof}
This is a version of Remark 7.18 (1) of \cite{MR2934577}, for which we provide some details. We can view the universal $\KK^{\MW}_2$-covering of $\A^2\setminus \lbrace 0 \rbrace$ as a $\KK^{\MW}_2$-torsor, which is therefore classified by an element in $H^1(\A^2 \setminus \lbrace 0 \rbrace, \KK^{\MW}_2)$. Any such element restricts trivially to $\Gm \times \A^1$ and $\A^1 \times \Gm$ as these motivic spaces are $\A^1$-discrete by Example \ref{Gm x A1 is discrete}. This means that any torsor will trivialize in the natural cover of $\A^2 \setminus \lbrace 0\rbrace$, and it remains to identify the corresponding cocycle $\Gm \times \Gm \to \KK^{\MW}_2$. Since $\A^2\setminus\lbrace 0\rbrace\simeq \Sigma\Gm\wedge\Gm$, we can use the suspension isomorphism to identify
\[
H^1(\A^2\setminus\lbrace 0\rbrace,\KK^{\MW}_2)\cong H^0(\Gm\wedge\Gm,\KK^{\MW}_2),
\]
which maps the class of a $\KK^{\MW}_2$-torsor on the left-hand side to a map $\Gm\wedge \Gm\to \KK^{\MW}_2$ on the right-hand side, from which we obtain the corresponding cocycle $\Gm \times \Gm \to \Gm \wedge \Gm \to \KK^{\MW}_2$ via composition with the quotient map $\Gm \times \Gm \to \Gm \wedge \Gm$. 

Finally, note that the universal symbol $\Gm\wedge\Gm\to\KK^{\MW}_2$ corresponds to the universal $\KK^{\MW}_2$-torsor over $\A^2\setminus\lbrace 0\rbrace$. Indeed, under the suspension-loops adjunction $[\A^2\setminus\lbrace 0 \rbrace, K(\KK^{\MW}_2,1)] \cong [\Gm \wedge \Gm, \KK^{\MW}_2]$, the universal symbol corresponds to a map $\alpha\colon\A^2\setminus\lbrace 0\rbrace\to K(\KK^{\MW}_2,1)$ which induces the isomorphism of Example~\ref{Homotopy Sheaf Example}. Consequently, the universal covering of $\A^2\setminus\lbrace 0\rbrace$ is given by Construction~\ref{Construction of Milnor--Witt K_2 coverings}.
\end{proof}
\begin{thm}\label{Coverings are Universal}
The $\KK^{\MW}_2$-coverings $p \colon \widetilde{X} \to X$ from Construction \ref{Construction of Milnor--Witt K_2 coverings} are universal abelian, provided that the Hurewicz map $h \colon \underline{\pi}_1^{\A^1}\!(X)\to \underline{H}_1^{\A^1}\!(X)$ is an epimorphism.
\end{thm}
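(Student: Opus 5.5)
Under the standing assumption that the Hurewicz map of $X$ is an epimorphism, the plan is to identify $p_*\underline{\pi}_1^{\A^1}\!(\widetilde{X})$ with $\ker(h)$ by comparing with the unknot case via $\varphi = (f,g)\colon X \to \A^2\setminus\lbrace 0\rbrace$.

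First, by Proposition \ref{knot complements connected}, both $X$ and $\widetilde{X}$ are $\A^1$-connected. So $p$ is a connected $\KK^{\MW}_2$-covering and the short exact sequence
$$0 \to \underline{\pi}_1^{\A^1}\!(\widetilde{X}) \to \underline{\pi}_1^{\A^1}\!(X) \to \KK^{\MW}_2 \to 0$$
holds, with the deck group identified with the fiber $\KK^{\MW}_2$.

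Second, since $\widetilde{X}$ is the pullback of the universal covering $\reallywidetilde{\A^2\setminus\lbrace 0\rbrace}$ of Lemma \ref{Unknot is Univ Covering}, the quotient map $\underline{\pi}_1^{\A^1}\!(X)\to\KK^{\MW}_2$ should factor as
$$\underline{\pi}_1^{\A^1}\!(X) \xrightarrow{\varphi_*} \underline{\pi}_1^{\A^1}\!(\A^2\setminus\lbrace 0\rbrace) \xrightarrow{\cong} \KK^{\MW}_2,$$
where the second map is the monodromy of the universal covering. Equivalently, the classifying map of $p$ is $\alpha\circ\varphi$, with $\alpha\colon \A^2\setminus\lbrace 0\rbrace\to K(\KK^{\MW}_2,1)$ as in the proof of Lemma \ref{Unknot is Univ Covering}. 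This gives $p_*\underline{\pi}_1^{\A^1}\!(\widetilde{X}) = \ker(\varphi_*)$. Alternatively, this can be read off directly from the Mayer--Vietoris/fiber sequence used in Proposition \ref{knot complements connected}, using that $\reallywidetilde{\A^2\setminus\lbrace 0\rbrace}$ is $\A^1$-simply connected.

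Third, use the commutative square from the proof of Proposition \ref{knot complements connected}. There the bottom map $\varphi_*\colon \underline{H}_1^{\A^1}\!(X)\to\underline{H}_1^{\A^1}\!(\A^2\setminus\lbrace 0\rbrace)$ is an isomorphism by the suspension equivalence $\Sigma\varphi$, and the right vertical Hurewicz map is an isomorphism by Example \ref{Homotopy Sheaf Example}. Hence $\varphi_*$ on $\underline{\pi}_1^{\A^1}$ equals $h$ followed by an isomorphism, so $\ker(\varphi_*)=\ker(h)$, which is the definition of universality.

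Independence of the choice of $f,g$ follows from the same argument. Any such choice yields a covering whose $\underline{\pi}_1^{\A^1}$ is $\ker(h)$, and by the Galois correspondence for $\A^1$-coverings (Morel, Section 7 of \cite{MR2934577}) the pointed connected covering is determined by this subsheaf.

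The main obstacle is the second step: justifying that the deck/monodromy map of the pulled-back covering is $\varphi_*$ composed with the monodromy of the universal covering. This requires naturality of the covering exact sequence under pullback in the $\A^1$-setting, and that pullbacks of $\A^1$-coverings along maps of $\A^1$-connected spaces behave as in topology. I would handle it with the long exact homotopy sequence of the fiber sequence $\KK^{\MW}_2\to\widetilde{X}\to X$ mapping to the one for $\A^2\setminus\lbrace 0\rbrace$. The fibers are identical and $\KK^{\MW}_2$ is discrete by Corollary \ref{KMW discrete}, so the boundary maps $\underline{\pi}_1^{\A^1}\to\underline{\pi}_0^{\A^1}(\KK^{\MW}_2)$ are compatible.
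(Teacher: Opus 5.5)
Your argument is correct, but it compares the deck homomorphism $q\colon\underline{\pi}_1^{\A^1}\!(X)\to\KK^{\MW}_2$ with $h$ differently from the paper.

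\textbf{The paper's route.} The paper never computes $q$. It uses only that $q$ is an epimorphism, which comes from the $\A^1$-connectedness of $\widetilde{X}$. After strongly $\A^1$-localizing the relevant quotient sheaves, it factors $q$ through $h$ by the weak Hurewicz theorem. It then shows that the resulting epimorphic endomorphism of $\KK^{\MW}_2\cong\underline{H}_1^{\A^1}\!(X)$ is invertible. For this it uses $\End(\KK^{\MW}_2)\cong K^{\MW}_0(k)$ from \cite{MR4874195}, projectivity of $\KK^{\MW}_2$ among strictly $\A^1$-invariant sheaves, and centrality of $K^{\MW}_0(k)$.

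\textbf{Your route.} You exploit the pullback structure of $p$. Naturality of the fiber sequences (equivalently, the classifying map $\alpha\circ\varphi$) gives $q=\partial_0\circ\varphi_*$, where $\partial_0$ is the boundary isomorphism for the simply connected covering of Lemma \ref{Unknot is Univ Covering}. The Hurewicz naturality square of Proposition \ref{knot complements connected} gives $\varphi_*=\theta\circ h$ with $\theta$ an isomorphism. So $q$ is literally $h$ followed by an isomorphism.

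\textbf{What each buys.} Your approach is more elementary: it needs neither the endomorphism computation nor any strong $\A^1$-localization of quotient sheaves. It also makes transparent that the hypothesis on $h$ is exactly what makes $q$ surjective. The paper's argument, in exchange, uses about $p$ only that it is a connected abelian covering with deck group $\KK^{\MW}_2$. It therefore shows that every such covering of $X$ satisfies $\underline{\pi}_1^{\A^1}\!(\widetilde{X})=\ker(h)$, however it was constructed.
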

Before turning to the proof, let us note that this already shows that the $ \A^1$-coverings are independent of the choice of polynomials $f,g \in k[x,y,z]$ used in Construction \ref{Construction of Milnor--Witt K_2 coverings} to realize $i(\A^1) = V(f,g)$ as a complete intersection.
\begin{proof}
Let $i \colon \A^1 \hookrightarrow \A^3$ be an arbitrary algebraic knot with knot complement $X$ and with $\KK^{\MW}_2$-covering $p \colon \widetilde{X} \rightarrow X$ as in Construction \ref{Construction of Milnor--Witt K_2 coverings}. From Proposition \ref{knot complements connected} we already know that both $X$ and $\widetilde{X}$ are $\A^1$-connected. Now the previous lemma tells us that the $\KK^{\MW}_2$-covering $\reallywidetilde{\A^2 \setminus \lbrace 0 \rbrace} \rightarrow \A^2 \setminus \lbrace 0 \rbrace$ is universal and hence also abelian. As pullbacks of abelian coverings are abelian, this shows that $p \colon \widetilde{X} \rightarrow X$ is abelian. In particular, there is a short exact sequence
\begin{center}
\begin{tikzcd}
0 \arrow[r] & \underline{\pi}_1^{\A^1}\!(\widetilde{X}) \arrow[r, "p_*"] & \underline{\pi}_1^{\A^1}\!(X) \arrow[r] & \underline{\Deck}_X(\widetilde{X}) \arrow[r] & 0
\end{tikzcd}
\end{center}
with $[\underline{\pi}_1^{\A^1}\!(X),\underline{\pi}_1^{\A^1}\!(X)] \subset p_*\underline{\pi}_1^{\A^1}\!(\widetilde{X})$. Since $\underline{\pi}_0^{\A^1}\!(\widetilde{X}) = 0$ and $\KK^{\MW}_2$ is $\A^1$-discrete by Corollary \ref{KMW discrete}, the long exact sequence of homotopy sheaves associated with $p$ together with Lemma \ref{Homotopy of Discrete Motivic Spaces} yields the short exact sequence
\begin{center}
\begin{tikzcd}
0 \arrow[r] & \underline{\pi}_1^{\A^1}\!(\widetilde{X}) \arrow[r, "p_*"] & \underline{\pi}_1^{\A^1}\!(X) \arrow[r] & \KK^{\MW}_2 \arrow[r] & 0.
\end{tikzcd}
\end{center}
By the universal property of cokernels we thus have $\underline{\Deck}_X(\widetilde{X}) \cong \KK^{\MW}_2$. Proposition \ref{Homology of Knot Complement} now gives us the following diagram:
\begin{center}
\begin{tikzcd}
\underline{\pi}_1^{\A^1}\!(X) \arrow[r, twoheadrightarrow] \arrow[d, "h"]& \underline{\pi}_1^{\A^1}\!(X)/[\underline{\pi}_1^{\A^1}\!(X),\underline{\pi}_1^{\A^1}\!(X)] \arrow[r, twoheadrightarrow] & \underline{\pi}_1^{\A^1}\!(X)/p_*\underline{\pi}_1^{\A^1}\!(\widetilde{X}) \arrow[r, "\cong"] & \underline{\Deck}_X(\widetilde{X}) \arrow[r, "\cong"] & \KK^{\MW}_2 \\
\underline{H}_1^{\A^1}\!(X)  \\ \KK^{\MW}_2 \arrow[u, "\cong"]
\end{tikzcd}
\end{center}
Note that except for the sheaf on the top left, all the sheaves in this diagram are abelian. We can make all of those strongly $\A^1$-invariant by enforcing $\A^1$-invariance on the level of their classifying spaces; see Remark 1.8 (2). For the abelian ones, this automatically yields strictly $\A^1$-invariant sheaves by Theorem 5.46 of \cite{MR2934577}. We denote this strong $\A^1$-localization by $L_{\A^1}$. Since $\A^1$-fundamental groups are strongly $\A^1$-invariant as shown in Theorem 6.1 of \cite{MR2934577}, the weak Hurewicz theorem \cite[Theorem 6.35]{MR2934577} can be applied to $$\underline{\pi}_1^{\A^1}\!(X) \cong L_{\A^1}\underline{\pi}_1^{\A^1}\!(X) \twoheadrightarrow L_{\A^1}\bigl(\underline{\pi}_1^{\A^1}\!(X)/[\underline{\pi}_1^{\A^1}\!(X),\underline{\pi}_1^{\A^1}\!(X)]\bigr),$$
which is an epimorphism since $L_{\A^1}$ is a left adjoint. 
Using that various other sheaves in our diagram from above are already strongly $\A^1$-invariant, this gives the commutative diagram
\begin{center}
\begin{tikzcd}
\underline{\pi}_1^{\A^1}\!(X) \arrow[r, twoheadrightarrow] \arrow[d, "h"]& L_{\A^1}\bigl(\underline{\pi}_1^{\A^1}\!(X)/[\underline{\pi}_1^{\A^1}\!(X),\underline{\pi}_1^{\A^1}\!(X)]\bigr) \arrow[r, twoheadrightarrow] & L_{\A^1}\bigl(\underline{\pi}_1^{\A^1}\!(X)/p_*\underline{\pi}_1^{\A^1}\!(\widetilde{X})\bigr) \arrow[r, "\cong"] & \KK^{\MW}_2 \\
\underline{H}_1^{\A^1}\!(X) \arrow[ur, dashed, twoheadrightarrow] \\ \KK^{\MW}_2 \arrow[u, "\cong"] \arrow[uurrr, twoheadrightarrow, out=0, in=205, "\varphi"]
\end{tikzcd}
\end{center}
where $\varphi \colon \KK^{\MW}_2 \twoheadrightarrow \KK^{\MW}_2$ is just the composition. It is known that the endomorphisms of $\KK^{\MW}_2$ are $K^{\MW}_0(k)$, where such an element acts by multiplication; see, for instance, Corollary 5.2 of \cite{MR4874195}. So the epimorphism $\psi \circ \varphi$ agrees with the multiplication of an element of $K^{\MW}_0(k)$. Since $\KK^{\MW}_2$ is a free strictly $\A^1$-invariant abelian sheaf, it is projective in $\Ab_{\A^1}(k)$, so this epimorphism is split. This allows us to choose an endomorphism $\psi \in \End_{\Ab_{\A^1} \!/ k}(\KK^{\MW}_2) \cong K^{\MW}_0(k)$ with $\id = \varphi \circ \psi$. As $K^{\MW}_0(k)$ is central in Milnor--Witt K-theory, we have $\id = \varphi \circ \psi = \psi \circ \varphi$ so that $\varphi$ is a isomorphism. In particular, the kernel of 
$$\underline{\pi}_1^{\A^1}\!(X) \twoheadrightarrow L_{\A^1}\bigl(\underline{\pi}_1^{\A^1}\!(X)/p_*\underline{\pi}_1^{\A^1}\!(\widetilde{X})\bigr) \cong \underline{\pi}_1^{\A^1}\!(X)/p_*\underline{\pi}_1^{\A^1}\!(\widetilde{X})$$
agrees with the kernel of of $h$ as we wanted to show.
\end{proof}
\begin{rem}
While we are currently not able to show that $\ker(h) = [\underline{\pi}_1^{\A^1}\!(X),\underline{\pi}_1^{\A^1}\!(X)]$,  we are able to show that $\ker(h)$ agrees with $L_{\A^1}[\underline{\pi}_1^{\A^1}\!(X),\underline{\pi}_1^{\A^1}\!(X)]$. Indeed, if we denote the maps 
$$\KK^{\MW}_2 \twoheadrightarrow L_{\A^1}\underline{\pi}_1^{\A^1}\!(X)/[\underline{\pi}_1^{\A^1}\!(X),\underline{\pi}_1^{\A^1}\!(X)] \,\,\, \text{and} \,\,\, L_{\A^1}\underline{\pi}_1^{\A^1}\!(X)/[\underline{\pi}_1^{\A^1}\!(X),\underline{\pi}_1^{\A^1}\!(X)] \twoheadrightarrow \KK^{\MW}_2$$
from the proof by $\alpha$ and $\beta$ respectively, then our argument from above shows that $\id = \psi \circ \varphi = \psi \circ \beta \circ \alpha$, so that $\alpha$ is a monomorphism. Since we are working in an abelian and hence in a balanced category, $\alpha$ is an isomorphism, so that also 
$$\underline{H}_1^{\A^1}\!(X) \to L_{\A^1}\bigl(\underline{\pi}_1^{\A^1}\!(X)/[\underline{\pi}_1^{\A^1}\!(X),\underline{\pi}_1^{\A^1}\!(X)]\bigr)$$
is an isomorphism. As a consequence, also the quotient map
$$L_{\A^1}\bigl(\underline{\pi}_1^{\A^1}\!(X)/[\underline{\pi}_1^{\A^1}\!(X),\underline{\pi}_1^{\A^1}\!(X)]\bigr) \twoheadrightarrow L_{\A^1}\bigl(\underline{\pi}_1^{\A^1}\!(X)/p_*\underline{\pi}_1^{\A^1}\!(\widetilde{X})\bigr)$$
is an isomorphism as a composition of isomorphisms. Using that $L_{\A^1}$ is a left adjoint and hence respects quotients, together with the strong $\A^1$-invariance of $\A^1$-fundamental sheaves, then gives 
$$L_{\A^1}[\underline{\pi}_1^{\A^1}\!(X),\underline{\pi}_1^{\A^1}\!(X)] = L_{\A^1}p_*\underline{\pi}_1^{\A^1}\!(\widetilde{X}) \cong L_{\A^1}\underline{\pi}_1^{\A^1}\!(\widetilde{X}) \cong \underline{\pi}_1^{\A^1}\!(\widetilde{X}).$$
By the above theorem, the sheaf on the very right is $\ker(h)$. In particular, to prove that the Hurewicz map for algebraic knot complements $X$ is the abelianization, it suffices to prove that the commutator subsheaf $[\underline{\pi}_1^{\A^1}\!(X),\underline{\pi}_1^{\A^1}\!(X)]$ is strongly $\A^1$-invariant.

By modifying the first part of this argument, we can also show that the second contraction $\ker(h)_{-2}$ agrees with $[\underline{\pi}_1^{\A^1}\!(X),\underline{\pi}_1^{\A^1}\!(X)]_{-2}$. To show this, we consider the quotient map $\KK^{\MW}_2 \twoheadrightarrow \KK^{\MM}_2$, then contract twice to obtain $(\KK^{\MM}_2)_{-2} \cong \KK^{\MM}_0 = \underline{\Z}$ and then define a map to $\bigl(\underline{\pi}_1^{\A^1}\!(X)/[\underline{\pi}_1^{\A^1}\!(X),\underline{\pi}_1^{\A^1}\!(X)]\bigr)_{-2}$ by using the freeness of $\underline{\Z}$. From there on we can basically argue as above.
\end{rem}
So we have our desired coverings and they satisfy the properties that fit the analogy to the classical theory. As can be done there, these allow us to modify our coefficients for (co-)homology, but the definition is a bit less clear. Given an abelian sheaf $M \colon \Sm_k \rightarrow \Ab$, we consider its left Kan extension along the functor $\Sm_k \rightarrow \Spc(k)$, which we also denote by $M$ and call an abelian sheaf on $\Spc(k)$. 
\begin{de}
Let $X$ be an algebraic knot complement, $p \colon \widetilde{X} \rightarrow X$ its universal $\KK^{\MW}_2$-covering and let $M$ be an abelian sheaf on $\Spc(k)$. We call the sheaf $p_*M = M(- \times_X \widetilde{X}) \in \Shv(X_{\Nis})$ the pushforward of $M$ along $p$.
\end{de}
Based on this we can now introduce Alexander modules of algebraic knots.
\begin{de}
Let $i \colon \A^1 \hookrightarrow \A^3$ be an algebraic knot, $X$ its knot complement, let $p \colon \widetilde{X} \rightarrow X$ be the universal $\KK^{\MW}_2$-covering and let $M$ be an abelian sheaf on $\Spc(k)$. The Alexander module of $i$ with coefficients in $M$ is the homology sheaf $\underline{H}_1^{\A^1}\!(\widetilde{X},M) \cong \underline{H}_1^{\A^1}\!(X,p_*M)$.
\end{de}
If $M = \underline{\Z}$, we just speak of the Alexander module of $i$ and denote it by $\underline{H}_1^{\A^1}\!(\widetilde{X})$.
\begin{rem}
We can also consider cohomological Alexander modules $H^1(\widetilde{X},M)$ by using sheaf cohomology. Here the underlying Grothendieck topology is relevant, as the Zariski and Nisnevich sheaf cohomology groups are known to agree when their coefficients are strictly $\A^1$-invariant; see Corollary 5.43 of \cite{MR2934577}. While this notion fits the analogy with the topological story less well than its homological counterpart, it might be more suitable for algebro-geometric arguments.
\end{rem}
\begin{prop}\label{Alexander mods detect alg knot types}
Alexander modules are invariants of algebraic knot types, i.e., if $i \colon \A^1 \hookrightarrow \A^3$ and $i' \colon \A^1 \hookrightarrow \A^3$ are algebraic knots for which there exists an automorphism $\varphi \in \Aut(\A^3)$ with $\varphi \circ i = i'$, then the Alexander modules of these algebraic knots agree.
\end{prop}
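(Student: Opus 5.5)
The idea is that an automorphism of $\A^3$ carrying one knot to the other restricts to an isomorphism of knot complements. We then lift this isomorphism to the universal abelian coverings and apply functoriality of $\A^1$-homology.

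Let $X = \A^3 \setminus i(\A^1)$ and $X' = \A^3 \setminus i'(\A^1)$. Since $\varphi \circ i = i'$, we have $\varphi(i(\A^1)) = i'(\A^1)$, so $\varphi$ restricts to an isomorphism of smooth schemes $\varphi|_X \colon X \xrightarrow{\cong} X'$. In particular $\varphi|_X$ is an equivalence of pointed motivic spaces once base points are chosen compatibly. This is harmless because, by Proposition \ref{knot complements connected}, both spaces are $\A^1$-connected.

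Next I would show that $\varphi|_X$ lifts to an equivalence $\widetilde{X} \simeq \widetilde{X'}$. There are two routes, and I would use the more robust one.

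The explicit route goes through Construction \ref{Construction of Milnor--Witt K_2 coverings}. If $i'(\A^1) = V(f',g')$, then $i(\A^1) = V(f' \circ \varphi, g' \circ \varphi)$. The covering of $X$ built from the pair $(f'\circ\varphi, g'\circ\varphi)$ is, by definition, the pullback of $\reallywidetilde{\A^2\setminus\lbrace 0\rbrace}$ along $(f',g') \circ \varphi|_X$. Hence it is the pullback of $\widetilde{X'}$ along $\varphi|_X$. Since pullbacks compose, this gives a pullback square in which the map $\widetilde{X} \to \widetilde{X'}$ is a base change of an equivalence, hence itself an equivalence.

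The one subtlety is that this identifies $\widetilde{X'}$ pulled back with the covering for the particular choice $(f'\circ\varphi, g'\circ\varphi)$. To conclude, I invoke the independence of the choice of $f,g$ noted after Theorem \ref{Coverings are Universal}. That theorem, together with Theorem \ref{Hurewicz} (under the standing algebraic closedness assumption), shows both coverings are universal abelian, i.e.\ they correspond to the subsheaf $\ker(h)$.

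Alternatively, one can argue purely by universality, which I expect to be the cleanest formulation. The induced isomorphism $(\varphi|_X)_*$ on $\underline{\pi}_1^{\A^1}$ commutes with the Hurewicz maps, by naturality of $h$. It therefore carries $\ker(h_X)$ onto $\ker(h_{X'})$. By the Galois-type correspondence between coverings and subsheaves, the pullback of $\widetilde{X'}$ is the covering of $X$ attached to $\ker(h_X)$, namely $\widetilde{X}$.

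Finally, the equivalence $\widetilde{X} \simeq \widetilde{X'}$ induces an isomorphism $\underline{H}_1^{\A^1}\!(\widetilde{X},M) \cong \underline{H}_1^{\A^1}\!(\widetilde{X'},M)$ for any coefficient sheaf $M$, by functoriality of $\A^1$-homology. This isomorphism commutes with the deck actions, with the deck groups identified via $(\varphi|_X)_*$, so it respects the module structure. The same argument gives the version with local coefficients $p_*M$.

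The main obstacle is making the lifting step rigorous. The precise classification of abelian $\KK^{\MW}_2$-coverings by subsheaves of $\underline{\pi}_1^{\A^1}$ is only implicit in the paper. For that reason I would lean on the explicit pullback argument, which only uses the definition of the covering and the independence of the choice of generators.
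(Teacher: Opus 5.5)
Your proof is correct. Its skeleton matches the paper's: restrict $\varphi$ to an isomorphism $X\cong X'$, lift it to the coverings, and apply functoriality of $\underline{H}_1^{\A^1}$. Your second route for the lift is also what the paper does: it passes through the induced automorphism $\varphi_*$ of $\underline{H}_1^{\A^1}\cong\KK^{\MW}_2$, implicitly using that the coverings are the universal abelian ones.

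Your preferred first route is genuinely different. Transporting generators, the pair $(f'\circ\varphi,g'\circ\varphi)$ cuts out $i(\A^1)$. Pasting pullback squares then exhibits the resulting covering as the base change of $\widetilde{X'}$ along $\varphi|_X$. So the lift is explicit, and it is equivariant for the identity on the fibre $\KK^{\MW}_2$.

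This route needs no lifting criterion for coverings, and it isolates where the hypothesis on $h$ enters. The identification of the covering of $i$ built from $(f'\circ\varphi,g'\circ\varphi)$ with the covering of $i'$ built from $(f',g')$ holds unconditionally. Theorems \ref{Hurewicz} and \ref{Coverings are Universal} are needed only to make the Alexander module independent of the choice of generators.

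However, this does not escape the covering--subsheaf correspondence you worry about in your last paragraph. The paper derives independence of $f,g$ from Theorem \ref{Coverings are Universal} precisely by identifying two coverings with $p_*\underline{\pi}_1^{\A^1}=\ker(h)$. So both routes rest on the same implicit input, just packaged differently.
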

\begin{proof}
Denote the complements of $i$ and $i'$ by $X$ and $X'$, respectively. Since $\varphi$ is an automorphism that satisfies $\varphi \circ i = i'$, it restricts to an isomorphism
$$\varphi \colon X = \A^3 \setminus i(\A^1) \rightarrow \A^3 \setminus i'(\A^1) = X'.$$
By Proposition \ref{Homology of Knot Complement} we thus obtain an automorphism $\varphi_* \in \Aut(\KK^{\MW}_2)$ after applying $\underline{H}_1^{\A^1}(-)$, which yields a commutative diagram
\begin{center}
\begin{tikzcd}
\widetilde{X} \arrow[r, "\simeq"] \arrow[d, "p"] & \widetilde{X'} \arrow[d, "p'"] \\
X \arrow[r, "\simeq"] & X'.
\end{tikzcd}
\end{center}
Therefore we obtain an isomorphism $H_1^{\A^1}\!(\widetilde{X}) \rightarrow H_1^{\A^1}\!(\widetilde{X'})$.
\end{proof}
To produce an explicit example of an Alexander module $\underline{H}_1^{\A^1}\!(\widetilde{X},M) \cong \underline{H}_1^{\A^1}\!(X,p_*M)$, let us present one idea on how to approach this object. The coefficients $p_*M$ are not that easily dealt with. If we work on open subschemes $U_i$ of $X$ where the covering $p$ trivializes, then we can understand the sheaf $p_*M$ well enough. Its value is given by
\begin{align*}
p_*M(U_i) = M(U_i \times_X \widetilde{X}) = M(U_i \times \KK^{\MW}_2) \simeq \Map(U_i \times \KK^{\MW}_2, M) & \simeq \Map(U_i,\underline{\Map}(\KK^{\MW}_2, M)) \\ & \simeq \Map(U_i, \underline{\Hom}(\KK^{\MW}_2,M)) \\ & \simeq \underline{\Hom}(\KK^{\MW}_2,M)(U_i),
\end{align*}
where $\underline{\Hom}$ denotes the discrete mapping space. This does perhaps not seem like something we understand better, but by prior work of the second named author \cite{MR4874195}, we actually do. Let us state the result without explaining some of the notions for now, and then discuss those a bit afterwards.
\begin{thm}\label{Homs out of KMW}
If $M_*$ is a connective homotopy module with ring structure, then 
$$\Hom_{\Shv(\Sm_k)}(\KK^{\MW}_2,M_*) \cong M_*(k)^{\N}$$
as filtered $M_*(k)$-modules. In particular, we have
$$\Hom_{\Shv(\Sm_k)}(\KK^{\MW}_2,M_n) \cong \prod_{l \geq 0} M_{n-2l}(k)$$
for all $n \geq 0$.
\end{thm}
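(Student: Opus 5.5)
The plan is to compute $\Hom(\KK^{\MW}_2,M_*)$ by resolving $\KK^{\MW}_2$ through its presentation as a free strictly $\A^1$-invariant sheaf, and then read off a power series description.

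The starting point is Theorem \ref{Milnor--Witt K-theory as Free Object}. Since $M_n$ is strictly $\A^1$-invariant (a homotopy module), a morphism of sheaves $\KK^{\MW}_2\to M_n$ is the same as a morphism of abelian sheaves. By freeness, such morphisms correspond to pointed maps $\Gm^{\wedge 2}\to M_n$, i.e.\ to reduced sections of $M_n$ over $\Gm\wedge\Gm$. The subtlety is that we need \emph{all} sheaf maps, not only those respecting the $\KK^{\MW}_*$-module structure. This is why one expects an infinite product rather than just $M_{n-2}(k)$.

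The plan is to filter $\KK^{\MW}_2$ and identify the graded pieces, then check that the resulting filtration on Hom has graded pieces $M_{n-2l}(k)$ and is complete.
\begin{enumerate}
\item[(1)] First compute $M_n(\Gm^{\wedge 2})\cong M_{n-2}(k)$ via contractions, $M_n(\Gm\wedge\Gm)=(M_n)_{-2}(k)=M_{n-2}(k)$. This pins down the value of a hom on the universal symbol $[a][b]$.
\item[(2)] A general sheaf map $\KK^{\MW}_2\to M_n$ is not determined by its value on symbols as an $M_*$-linear map, because it need not be additive with respect to the twisted tensor relation in a linear way. Instead, use the \emph{sheaf} (not module) structure: $\KK^{\MW}_2$ is the free strictly $\A^1$-invariant sheaf on $\Gm^{\wedge 2}$, but maps into $M_n$ out of the underlying sheaf of sets correspond to $M_n(\KK^{\MW}_2)$. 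Compute the latter by writing $\KK^{\MW}_2$ as a colimit built from products $\Gm^{\times 2l}$ (words of symbols). Using the ring structure on $M_*$, the $l$-th stage contributes $M_n((\Gm^{\wedge 2})^{\wedge l})\cong M_{n-2l}(k)$. This is realised by maps of the form $x\mapsto$ (an $l$-fold ``divided power'' of $x$) $\cdot\, m$ with $m\in M_{n-2l}(k)$, generalising the identity at $l=1$ and constants at $l=0$.
\item[(3)] Show that these maps are well defined sheaf maps, using the ring structure to define the divided powers. Then show that they exhaust Hom, with independence checked by evaluating on products of symbols over field extensions and using the filtration by $l$. This gives the isomorphism with $\prod_{l\ge0}M_{n-2l}(k)=M_*(k)^{\N}$ as filtered $M_*(k)$-modules.
\end{enumerate}

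The main obstacle is (2)/(3): showing that every sheaf map is a (convergent, degreewise finite by connectivity of $M_*$) sum of these basic operations. Connectivity is what makes the product degreewise finite in each fixed $n$. I would first try to reduce to fields via unramifiedness of $M_n$, which makes maps determined on finitely generated field extensions $F/k$. I would then do an induction on symbol length in $K^{\MW}_2(F)$, peeling off the top-filtration term at each step.
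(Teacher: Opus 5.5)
The paper does not prove this statement; it quotes it from \cite{MR4874195}, so your outline has to stand on its own.

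\textbf{What works.} The bounding half of your outline is fine. Since $M_n$ is unramified, a map $\phi\colon\KK^{\MW}_2\to M_n$ is determined by its values on function fields. Now evaluate $\phi$ on the generic sum $[t_1][t_2]+\dots+[t_{2l-1}][t_{2l}]$ over $\Gm^{\times 2l}$, using $M_n(\Gm^{\times r})\cong\bigoplus_{I\subseteq\{1,\dots,r\}}M_{n-|I|}(k)\prod_{i\in I}[t_i]$. This shows that if $\phi$ vanishes on all sums of fewer than $l$ symbols, then $\phi(x_1+\dots+x_l)=m\,x_1\cdots x_l$ for a single $m\in M_{n-2l}(k)$. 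Connectivity makes the induction terminate.

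\textbf{An error to delete.} Your opening claim that sheaf maps $\KK^{\MW}_2\to M_n$ are the same as maps of abelian sheaves is false. Together with Theorem~\ref{Milnor--Witt K-theory as Free Object} it would give $\Hom(\KK^{\MW}_2,M_n)\cong M_{n-2}(k)$, i.e.\ only the $l=1$ factor.

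\textbf{The genuine gap: existence.} The existence half is the actual content of the theorem, and your outline does not supply it. You need, for every $l$, a morphism of sheaves $\gamma_l\colon\KK^{\MW}_2\to M_{2l}$ with $\gamma_l(x_1+\dots+x_l)=x_1\cdots x_l$ on sums of $l$ symbols. Without it you can neither peel off the top term in your induction nor get surjectivity onto $\prod_l M_{n-2l}(k)$.

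``Use the ring structure to define divided powers'' does not provide these maps. The naive definition $\gamma_l(\sum_i x_i)=\sigma_l(x_1,x_2,\dots)$ is not well defined. Moreover, nothing in your argument distinguishes $\KK^{\MW}_2$ from $\KK^{\MM}_2$, for which the analogue fails. To see this, take $\phi\colon\KK^{\MM}_2\to\KK^{\MM}_4$ with $\phi(0)=0$.
\begin{itemize}
\item Your generic computation gives $\phi(x_1+x_2)=m(x_1+x_2)+\lambda\,x_1x_2$ with $m\in K^{\MM}_2(k)$ and $\lambda\in\Z$.
\item Apply this to $\{a,bc\}=\{a,b\}+\{a,c\}$ over $k(a,b,c)$. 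It yields $\lambda\{a,b\}\{a,c\}=\lambda\{-1,a,b,c\}=0$.
\item But $\{-1,a,b,c\}\neq 0$, since its iterated residue is $-1\in k^\times$. So $\lambda$ must be even, and no integral $\gamma_2$ exists on $\KK^{\MM}_2$.
\end{itemize}
In $\KK^{\MW}_2$ the corresponding relation reads $[a][bc]=[a][b]+[a][c]+\eta[a][b][c]$. It is exactly the $\eta$-term that leaves room for $\gamma_2$; in particular, $\gamma_2$ cannot factor through $\KK^{\MW}_2\to\KK^{\MM}_2$.

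So the missing step is an actual construction of these operations. It must be compatible with the twisted tensor and Steinberg relations, and must produce morphisms of sheaves on $\Sm_k$, not just natural maps on fields. The colimit heuristic in your step (2) does not fill this gap, because controlling the relations in that colimit is precisely the difficulty.
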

So for suitable coefficients, namely connective homotopy modules with ring structure and their homogeneous pieces, we do understand the morphisms out of $\KK^{\MW}_2$. Homotopy modules are a motivic analogue of abelian groups from classical stable homotopy theory. By this we mean that there is a natural $t$-structure on the category of motivic spectra $\SH(k)$ determined by the vanishing of homotopy sheaves, such that 
$$\underline{\pi}_0(-)_* = \underline{\pi}_{-*,-*} \colon \SH(k)^{\heartsuit} \rightarrow \Pi(k)$$
is an equivalence, where the latter category is the category of homotopy modules. For readers who do not find this helpful, lets us give two central examples which also satisfy the additional assumptions of the above theorem.
\begin{ex}
In Definition \ref{Def of KMW}, we saw the Milnor--Witt K-theory sheaf $\KK^{\MW}_*$. This is a homotopy module with ring structure, but it is non-connective. Once we kill the unique generator $\eta$ of negative degree, i.e., we consider its quotient $\KK^{\MM}_* = \KK^{\MW}_* \! \! / \eta$, it becomes a connective homotopy module with ring structure, known as Milnor K-theory. Another such example is given by algebraic K-theory $\KK^{\QQ}_*$.
\end{ex}
So if we for instance consider the unknot $u \colon \A^1 \hookrightarrow \A^3$, we have an explicit cover of the knot complement on which the universal $\KK^{\MW}_2$-covering trivializes, as seen in Construction \ref{Construction of Milnor--Witt K_2 coverings}. Using the Mayer-Vietoris sequence together with Theorem \ref{Homs out of KMW}, we can compute the homology of $U_i$ and also control how these groups glue. While this is one way to compute the Alexander module of the unknot, we also have a rather formal argument based on our previous results. Let us present that one instead.
\begin{ex}\label{Alexander mod of unknot}
The Alexander module of the unknot is trivial. Indeed, we have seen in Lemma \ref{Unknot is Univ Covering} that the $\KK^{\MW}_2$-covering $\widetilde{X} \rightarrow X$ of the complement of the unknot is universal, that is, we have $\underline{\pi}_1^{\A^1}\!(\widetilde{X}) = 0$. The weak Hurewicz theorem (Theorem 6.35 of \cite{MR2934577}) now tells us, that for any morphism $\alpha \colon \underline{\pi}_1^{\A^1}\!(\widetilde{X}) \rightarrow M$ into a strictly $\A^1$-invariant abelian sheaf $M$, there is a unique morphism of strictly $\A^1$-invariant abelian sheaves $\beta \colon \underline{H}_1^{\A^1}\!(\widetilde{X}) \rightarrow M$, such that $\alpha = \beta \circ h$. The condition on the composition is void since $\underline{\pi}_1^{\A^1}\!(\widetilde{X}) = 0$, so that $\underline{H}_1^{\A^1}\!(\widetilde{X})$ is just the initial strictly $\A^1$-invariant abelian sheaf, i.e., we have $\underline{H}_1^{\A^1}\!(\widetilde{X}) = 0$ as claimed.
\end{ex}
\begin{cor}\label{Rectifiability invariant}
If there exists an algebraic knot $i \colon \A^1 \hookrightarrow \A^3$ with knot complement $X$, such that the Alexander module $\underline{H}_1^{\A^1}\!(\widetilde{X})$ is non-trivial, then $i$ is not rectifiable.
\end{cor}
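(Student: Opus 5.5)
We argue by contraposition. Suppose that $i \colon \A^1 \hookrightarrow \A^3$ is rectifiable, i.e., there is an automorphism $\varphi \in \Aut(\A^3)$ such that $i' = \varphi \circ i$ is a standard embedding. Since any two standard embeddings of $\A^1$ as coordinate axes differ by a permutation of coordinates, which is again an automorphism of $\A^3$, we may compose $\varphi$ with such a permutation. We may therefore assume that $i'$ is the unknot $u$ used in Construction \ref{Construction of Milnor--Witt K_2 coverings} and Lemma \ref{Unknot is Univ Covering}.

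Let $X$ and $X'$ denote the complements of $i$ and $i'$, with $\KK^{\MW}_2$-coverings $\widetilde{X}$ and $\widetilde{X'}$. Proposition \ref{Alexander mods detect alg knot types} then gives an isomorphism
$$\underline{H}_1^{\A^1}\!(\widetilde{X}) \cong \underline{H}_1^{\A^1}\!(\widetilde{X'}).$$
By Example \ref{Alexander mod of unknot}, the Alexander module of the unknot vanishes, so $\underline{H}_1^{\A^1}\!(\widetilde{X}) = 0$. This contradicts the hypothesis that it is non-trivial, so $i$ cannot be rectifiable.

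The only delicate point is that the coverings must be the canonical ones, so that the comparison in Proposition \ref{Alexander mods detect alg knot types} is legitimate. Specifically, the automorphism $\varphi$ has to lift to an isomorphism $\widetilde{X} \simeq \widetilde{X'}$ over $X \simeq X'$. This rests on the coverings being universal abelian and hence independent of the choice of $f,g$, which is Theorem \ref{Coverings are Universal}. That theorem requires the Hurewicz map to be an epimorphism, and this is supplied by Theorem \ref{Hurewicz} (over algebraically closed $k$, as in the standing hypotheses).

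For the unknot the same choice is automatic: $\varphi$ carries the covering of $X$ to a universal abelian covering of $X'$, which by Lemma \ref{Unknot is Univ Covering} and uniqueness is the universal covering with trivial $\underline{H}_1^{\A^1}$. I expect no real obstacle beyond this bookkeeping; the work is already contained in the cited results.
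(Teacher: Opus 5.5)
Your proposal is correct and is exactly the paper's argument: by contraposition, Proposition~\ref{Alexander mods detect alg knot types} reduces the Alexander module of a rectifiable knot to that of the unknot, which vanishes by Example~\ref{Alexander mod of unknot}. Your additional bookkeeping makes explicit what the paper leaves implicit. This consists of permuting coordinates to reach the specific unknot, and noting that lifting $\varphi$ to the coverings relies on Theorem~\ref{Coverings are Universal}, hence on the Hurewicz epimorphism of Theorem~\ref{Hurewicz} over $k = k^{\alg}$.
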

\begin{proof}
This follows directly from the above example together with Proposition \ref{Alexander mods detect alg knot types}.
\end{proof}
That the Alexander module of the unknot is trivial is maybe not too surprising, at least if the analogy to the classical story is strong enough. So what about other examples? The same approach as for the unknot works in theory, but it seems very difficult to execute. The first issue is that our construction of the universal $\KK^{\MW}_2$-coverings relies on the fact from Theorem \ref{Knots are Complete Intersections} that algebraic knots are complete intersections. At least for the algebraic trefoil knot this is not an obstruction anymore; see the discussion before Question \ref{Rectifiability}. To show that it defines a non-trivial algebraic knots, let's say, over $\C$, i.e., a non-rectifiable embedding $\A^1 \hookrightarrow \A^3$, we do not need complete computations of its Alexander module. Producing one non-trivial homology class suffices. This is, of course, still much easier said than done, but this is what we are currently working on \cite{WW2}.
\begin{rem}
Since our work is strongly inspired by classical knot theory, a natural question is if we can recover those classical notions. Yes, we can! There is real realization functor $r_{\R} \colon \Spc(\R) \rightarrow \Spc$ induced by the $\R$-valued points functor $(-)(\R)$. Under this functor, the discrete motivic space $\KK^{\MW}_2$ is mapped to $\Z$, the universal $\KK^{\MW}_2$-coverings are mapped to universal $\Z$-coverings and the motivic Alexander module $\underline{H}_1^{\A^1}\!(\widetilde{X})$ is mapped to the Alexander module of the realization of $X$. This does in particular show that those geometric notions can be defined algebraically. Also for this we refer to a forthcoming article, which may or may not differ from \cite{WW2}.
\end{rem}

\appendix
\section{Algebraic Knots defined by Chebyshev Polynomials}\label{Appendix A}

For this entire section we consider a base field $k$ of characteristic $0$. We recall some facts concerning algebraic knots defined by Chebyshev polynomials from \cite{freudenburg:freudenburg}. This provides infinitely many algebraic knot embeddings, and the key point about zeros of Chebyshev polynomials being real and simple will be relevant for our discussion of the surjectivity of the Hurewicz map for knot complements. 

First, recall that Chebyshev polynomials of the first kind are polynomials $T_n$ defined by $T_n(\cos\theta)=\cos(n\theta)$, and that these polynomials satisfy a recursion of the form $T_0(x)=1, T_1(x)=x$ and 
\[
T_{n+1}(x)=2xT_n(x)-T_{n-1}(x)
\]
The Chebyshev polynomials of the first kind give rise to a family of monic polynomials by
\[
F_n(x)=2T_n\left(\frac{x}{2}\right)
\]

and the polynomials $T_n$ (or $F_n$) can be used to define embeddings 
\[
\iota(i,j,k)\colon\A^1\hookrightarrow\A^3\colon t\mapsto (T_i(t),T_j(t),T_k(t)).
\]

\begin{ex}
We list a couple of interesting example cases. More examples can be found in \cite[Table 2]{freudenburg:freudenburg}. Whenever we mention certain knots, we implicitly assume that the base field contains $\R$.
\begin{itemize}
    \item The embeddings $\iota(2,j,k)$ and $\iota(i+j,i+j)$ are rectifiable, by \cite[Corollaries 4.1 and 4.2]{freudenburg:freudenburg}.
    \item The embedding $\iota(3,4,5)$ is the trefoil knot. Explicitly, (the monic version of) it is defined by 
    \[
    t\mapsto (t^3-3t,t^4-4t^2+2,t^5-5t^3+5t).
    \]
    A coordinate on the embedded knot is given by $-x^3+yz+3x$.
    \item The embedding $\iota(3,5,7)$ is the figure-eight knot. Explicitly, (the non-monic version of) it is defined by 
    \[
    t\mapsto (4t^3-3t,16t^5-20t^3+5t,64t^7-112t^5+56t^3-7t). 
    \]
    A coordinate on the embedded knot is given by $-4xy^2+4x^2z+2x-z$.
    \item By results of Koseleff and Pecker \cite{koseleff:pecker}, the embeddings $\iota(3,3n+1,3n+2)$ represent the $(2,2n+1)$ torus knots, and the embeddings $\iota(i,j,ij-i-j)$ for $(i,j)=1$ represent the alternating $(i,j)$-knots. In particular, the Chebyshev embeddings $\iota(i,j,k)\colon\A^1\to\A^3$ represent infinitely many knot types.
    \item Another result of Koseleff and Pecker \cite{koseleff:pecker} shows that if we allow an additional phase, i.e., define the embedding as $t\mapsto (T_i(t),T_j(t),T_k(t+\phi))$, then every knot has a Chebyshev polynomial representation.
\end{itemize} 
\end{ex}

We close this section with a well-known fact about the roots of Chebyshev polynomials, which can, for instance, be found as lemma 2.3 (b) in \cite{freudenburg:freudenburg}:
\begin{prop}
    The roots of the Chebyshev polynomial $T_n \in \R[t]$ are $\cos\left(\frac{2k-1}{2n}\pi\right)$ for all $1 \leq k \leq n$, and all these roots are simple. 
\end{prop}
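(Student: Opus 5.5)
The plan is to use only the defining identity $T_n(\cos\theta)=\cos(n\theta)$ and the recursion, and to count roots against the degree. Throughout we take $n\geq 1$; for $n=0$ we have $T_0=1$, which has no roots, so the statement is vacuous.

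\emph{Degree.} First we determine the degree and leading coefficient of $T_n$. Induction on the recursion $T_{n+1}(x)=2xT_n(x)-T_{n-1}(x)$, starting from $T_0=1$ and $T_1=x$, shows that $T_n$ has degree exactly $n$ with leading coefficient $2^{n-1}$ for $n\geq 1$. The inductive step works because $2xT_n$ has degree $n+1$ while $T_{n-1}$ has strictly smaller degree. In particular $T_n\neq 0$, so $T_n$ has at most $n$ roots counted with multiplicity.

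\emph{Producing $n$ distinct roots.} Set $\theta_k=\frac{2k-1}{2n}\pi$ for $1\leq k\leq n$. These are $n$ distinct points of the open interval $(0,\pi)$. Each satisfies $n\theta_k=\frac{(2k-1)\pi}{2}$, an odd multiple of $\pi/2$, hence
\[
T_n(\cos\theta_k)=\cos(n\theta_k)=0 .
\]
Since $\cos$ is strictly decreasing, hence injective, on $[0,\pi]$, the values $x_k=\cos\theta_k$ are $n$ pairwise distinct real roots of $T_n$.

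\emph{Conclusion.} A nonzero polynomial of degree $n$ over the field $\R$ has at most $n$ roots counted with multiplicity. We have exhibited $n$ distinct roots, so the factorization must be
\[
T_n(t)=2^{n-1}\prod_{k=1}^n (t-x_k).
\]
Consequently $T_n$ has no further roots (not even complex ones), and every $x_k$ occurs with multiplicity one, i.e. all roots are simple.

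The only step requiring any care is the degree and leading-coefficient computation, which guarantees that the $n$ roots found exhaust the roots with multiplicity; this is routine. One should also note that $T_n(\cos\theta)=\cos(n\theta)$ is an identity for all real $\theta$ (it follows from the recursion and the formula $\cos((n+1)\theta)+\cos((n-1)\theta)=2\cos\theta\cos(n\theta)$), so it is legitimate to evaluate it at the $\theta_k$.
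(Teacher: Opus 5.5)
Your proof is correct and complete. The degree and leading-coefficient induction, together with the $n$ distinct zeros $\cos\theta_k$ for $\theta_k\in(0,\pi)$, forces $T_n(t)=2^{n-1}\prod_{k=1}^n(t-\cos\theta_k)$. That factorization gives both the complete list of roots and their simplicity.

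The paper gives no proof of its own here. It only cites Lemma 2.3(b) of \cite{freudenburg:freudenburg}, and your argument is the standard self-contained one.

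An equally short alternative for the simplicity part is to differentiate $T_n(\cos\theta)=\cos(n\theta)$. This gives $T_n'(\cos\theta_k)=n\sin(n\theta_k)/\sin\theta_k=\pm n/\sin\theta_k\neq 0$. It still needs your degree count to rule out further roots, which your factorization handles directly, over $\C$ as well as over $\R$.
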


\begin{cor}
\label{cor:chebyshev-simple}
    Let $\iota(i,j,k)\colon\A^1\to\A^3$ be an embedding defined either by the Chebyshev polynomials $T_n$ or their monic counterparts $F_n$ over the field $\R$. Then the intersection points of the image $\iota(i,j,k)(\A^1)$ with any coordinate hyperplane in $\A^3$ are all real and simple.
\end{cor}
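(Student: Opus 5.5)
The plan is to reduce the statement, coordinate by coordinate, to the preceding Proposition on the roots of $T_n$. Fix a coordinate hyperplane, say $\{x_1 = c\}$; the other two cases are identical after permuting coordinates. The image $\iota(i,j,k)(\A^1)$ is a closed curve isomorphic to $\A^1$, parametrized by $t$. The scheme-theoretic intersection with $\{x_1 = c\}$ is therefore the closed subscheme of $\A^1$ cut out by the single polynomial $T_i(t) - c$ (or $F_i(t) - c$ in the monic normalization). So the claim reduces to the following one-variable statement: the polynomial $T_i(t) - c$ has only real roots, all of multiplicity one. Here I read ``coordinate hyperplane'' as $\{x_1 = 0\}$, which is the case actually needed later for the Hurewicz argument. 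In that case $c = 0$, and the statement is exactly the preceding Proposition applied to $T_i$.

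The steps are as follows.

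First, because $\iota$ is a closed immersion, its restriction to $\iota^{-1}(\{x_1=0\})$ identifies the intersection with $\Spec \R[t]/(T_i(t))$.

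Second, the preceding Proposition lists the roots of $T_i$ as $\cos\bigl(\frac{(2l-1)\pi}{2i}\bigr)$ for $1 \leq l \leq i$. These are $i$ pairwise distinct real numbers, since $\cos$ is injective on $[0,\pi]$. Because $\deg T_i = i$, they account for all roots, and so each is simple and $\R[t]/(T_i)$ is a product of $i$ copies of $\R$.

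Third, for the monic version, $F_i(t) = 2T_i(t/2)$, so its roots are twice those of $T_i$. They remain real and simple, since rescaling the variable preserves both properties.

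Fourth, the same argument applied to $T_j$ and $T_k$ handles the hyperplanes $\{x_2 = 0\}$ and $\{x_3 = 0\}$.

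If one instead wants hyperplanes $\{x_1 = c\}$ with $c \neq 0$, the argument needs a different ingredient. The substitution $t = \cos\theta$ gives $T_i(\cos\theta) = \cos(i\theta)$, so for $|c| < 1$ (respectively $|c| < 2$ for $F_i$) the equation $T_i(t) = c$ has $i$ distinct solutions $\cos\bigl((\pm\arccos c + 2\pi m)/i\bigr)$ lying in $(-1,1)$. These are again all the roots, hence each is simple. For $|c| \geq 1$, repeated roots or nonreal roots can occur, so the statement must be read for the hyperplanes through the origin (or $|c|<1$).

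The main obstacle is interpretive rather than technical: one has to pin down which hyperplanes are meant and make sure the intersection is taken scheme-theoretically via the parametrization. Once that is fixed, the proof is the counting argument above, namely $i$ distinct real roots for a degree $i$ polynomial.
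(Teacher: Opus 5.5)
Your proposal is correct and matches the paper's argument. The paper gives no separate proof and treats the corollary as immediate from the preceding Proposition: the intersection with $\{x_\ell=0\}$ is the zero locus of $T_i$, $T_j$ or $T_k$ (or of $F_i$, $F_j$, $F_k$, which are rescalings), and your reading of coordinate hyperplanes as those through the origin is the intended one; your side remark that the claim can fail for translates $\{x_\ell=c\}$ with $|c|\geq 1$ is also accurate.
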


\section{Notes on \texorpdfstring{$\A^1$}{A1}-Fundamental Groups of Knot Complements}\label{Appendix B}

In this appendix we discuss a way of proving that the Hurewicz homomorphism $\underline{\pi}_1^{\A^1}\!(X)\to \underline{H}_1^{\A^1}\!(X)$ is an epimorphism for knot complements, assuming a special condition satisfied if $k = k^{\alg}$ or by algebraic knots defined by Chebyshev polynomials. The main idea of the proof is as follows. For an algebraic knot $i \colon \A^1 \to \A^3$, consider a hyperplane $H\cong\A^2$ in $\A^3$ such that all intersection points in $H \cap i(\A^1)$ are $k$-rational and simple. For Chebyshev knots, these exist and can be chosen to be coordinate hyperplanes, by Corollary~\ref{cor:chebyshev-simple}. For $H \setminus (H \cap i(\A^1))$, an excision result of Asok and Doran implies surjectivity of the Hurewicz map. Via an $\A^1$-local degree argument, the transversality of the intersection points implies surjectivity of the composition
\[
\underline{\pi}_1^{\A^1}\!(H\setminus(H\cap\iota(\A^1)))\to\underline{\pi}_1^{\A^1}\!(X)\to \underline{H}_1^{\A^1}\!(X)\cong\K^{\MW}_2.
\]
This will then establish the surjectivity of the Hurewicz map for knot complements satisfying the hyperplane condition from above.

We begin with a consequence of the Asok--Doran excision theorem for $\A^1$-homotopy groups:

\begin{prop}\label{prop:hurewicz-plane-minus-pts}
   Let $k$ be an infinite base field of characteristic not $2$. Then the Hurewicz map 
   $$h \colon \underline{\pi}_1^{\A^1}\!(\A^2 \setminus \{ p_1,\dots,p_n \})\to \underline{H}_1^{\A^1}\!(\A^2 \setminus \{ p_1,\dots,p_n \})$$
   is an epimorphism, where  $p_1,\dotsc,p_n$ are closed $k$-rational points of $\A^2$.
\end{prop}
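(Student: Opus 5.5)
We argue by induction on $n$, using the Asok--Doran excision theorem \cite{asok:doran:excision} to compare $\A^2\setminus\{p_1,\dots,p_n\}$ with a wedge of punctured planes. For $n=0$ the space is $\A^2$, which is contractible, and there is nothing to prove. For $n=1$, a translation identifies $\A^2\setminus\{p_1\}$ with $\A^2\setminus\{0\}$. There the Hurewicz map is an isomorphism by Example \ref{Homotopy Sheaf Example}, with both sheaves equal to $\KK^{\MW}_2$.

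For general $n$, we first apply a linear change of coordinates, using that $k$ is infinite. This lets us assume the $p_i$ have pairwise distinct first coordinates $a_1<\dots$ (just distinct, since no order is available). We then cover $\A^2\setminus\{p_1,\dots,p_n\}$ by two opens. The first is $U=\A^2\setminus(\{p_1,\dots,p_{n-1}\}\cup V(x-a_n))$ together with, after adding the part of the line away from $p_n$, the open set $U'=\A^2\setminus\{p_1,\dots,p_{n-1}\}\setminus\{x=a_n\}$. The second is $W=\{x\neq a_1,\dots,a_{n-1}\}\setminus\{p_n\}$. A cleaner alternative is to work directly with cofiber sequences. Homotopy purity identifies the cofiber of
$$\A^2\setminus\{p_1,\dots,p_n\}\hookrightarrow\A^2$$
with $\bigvee_{i=1}^n \Th(\mathcal O^2_{p_i})\simeq\bigvee_{i=1}^n S^{4,2}$, because the points are $k$-rational and have trivial normal bundles. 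Since $\A^2$ is contractible, this gives $\Sigma(\A^2\setminus\{p_1,\dots,p_n\})\simeq\bigvee_n S^{4,2}$. Hence $\underline{H}_1^{\A^1}\cong(\KK^{\MW}_2)^{\oplus n}$, and the $i$-th summand is generated via the map induced by a small punctured neighbourhood of $p_i$.

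The key step is to realise each summand by a map from a sphere. For each $i$ we choose a map
$$\A^2\setminus\{0\}\to\A^2\setminus\{p_1,\dots,p_n\}.$$
On $\underline{H}_1^{\A^1}$ it should induce the inclusion of the $i$-th summand. The natural candidate comes from excision: $\A^2\setminus\{p_1,\dots,p_n\}$ contains the Zariski open $V_i=\A^2\setminus\{p_j\}_j$ restricted away from the other points. Here Asok--Doran excision, for removing closed subsets of codimension $2$ from a smooth scheme, says the inclusion
$$\A^2\setminus\{p_1,\dots,p_n\}\hookrightarrow\A^2\setminus\{p_i\}$$
induces an isomorphism on $\underline{\pi}_0^{\A^1}$ and an epimorphism on $\underline{\pi}_1^{\A^1}$ in the appropriate range. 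So instead we plan the following. Excision implies that the map
$$\A^2\setminus\{p_1,\dots,p_n\}\to\prod_i\bigl(\A^2\setminus\{p_i\}\bigr)$$
is $\A^1$-$1$-connected after passing to the relevant wedge/product comparison. Concretely, we compare
$$\bigvee_i(\A^2\setminus\{p_i\})\to\A^2\setminus\{p_1,\dots,p_n\}$$
via the $\A^1$-van Kampen-type statement contained in Asok--Doran's homotopy excision. This yields that $\underline{\pi}_1^{\A^1}$ of the knot-free plane complement receives an epimorphism from the free strongly $\A^1$-invariant product $*_i\,\underline{\pi}_1^{\A^1}(\A^2\setminus\{p_i\})$.

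Once this is in place, the Hurewicz map is handled by naturality. The composite
$$*_i\,\underline{\pi}_1^{\A^1}(\A^2\setminus\{p_i\})\to\underline{\pi}_1^{\A^1}(X_n)\to\underline{H}_1^{\A^1}(X_n)\cong(\KK^{\MW}_2)^{\oplus n}$$
restricts on each free factor to the Hurewicz isomorphism of a punctured plane followed by the inclusion of the $i$-th summand. It is therefore surjective, which forces $h$ to be surjective. The main obstacle is the middle step: making precise which form of Asok--Doran excision provides the surjection from the (strongly $\A^1$-invariant) free product onto $\underline{\pi}_1^{\A^1}(X_n)$, or at least a set of maps $\A^2\setminus\{0\}\to X_n$ hitting every summand. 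The fallback is to construct these maps explicitly. We would take a small affine open neighbourhood of each $p_i$ avoiding the other points, Nisnevich-locally isomorphic to a neighbourhood of $0$. Via purity-naturality, as in the proof of Proposition \ref{knot complements connected}, its punctured version maps to $X_n$ inducing the $i$-th summand on homology. This needs only the Thom-space identification, not the full excision theorem.
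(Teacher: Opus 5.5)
Your homology computation via homotopy purity is fine; it is Step~2 of the paper's proof. The step that carries all the content, getting from $\underline{H}_1^{\A^1}$ back to $\underline{\pi}_1^{\A^1}$, is never carried out, and each of your routes to it fails.

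\begin{itemize}
\item \emph{The wedge/free-product step is ill-posed.} The canonical maps are the open immersions $X_n\hookrightarrow\A^2\setminus\{p_i\}$. So there is no map $\bigvee_i(\A^2\setminus\{p_i\})\to X_n$, and no homomorphism $*_i\,\underline{\pi}_1^{\A^1}(\A^2\setminus\{p_i\})\to\underline{\pi}_1^{\A^1}(X_n)$. Moreover, the Asok--Doran excision theorem is a connectivity statement for open immersions $Y\setminus Z\hookrightarrow Y$, not a van Kampen theorem.
\item \emph{The product claim is circular.} Up to the Hurewicz isomorphisms of the $\A^2\setminus\{p_i\}$, the map $X_n\to\prod_i(\A^2\setminus\{p_i\})$ induces on $\underline{\pi}_1^{\A^1}$ the composite of $h$ with the purity isomorphism $\underline{H}_1^{\A^1}(X_n)\cong(\KK^{\MW}_2)^{\oplus n}$. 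So saying this map is $\A^1$-$1$-connected is exactly the proposition.
\item \emph{The fallback gives only homological information.} A neighbourhood $V_i$ of $p_i$ avoiding the other points yields $V_i\setminus\{p_i\}\to X_n$, which hits the $i$-th summand on $\underline{H}_1^{\A^1}$. To obtain classes in $\underline{\pi}_1^{\A^1}(X_n)$ you would need Hurewicz surjectivity for $V_i\setminus\{p_i\}$, a space no simpler than $X_n$. Alternatively you would need a genuine map $\A^2\setminus\{0\}\to X_n$ in the $\A^1$-homotopy category. A Nisnevich-local identification of neighbourhoods is only an equivalence of Thom spaces and does not provide such a map.
\end{itemize}

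The paper uses precisely the excision statement you quoted and then set aside. It shows $\underline{\pi}_1^{\A^1}(X_n)\to\underline{\pi}_1^{\A^1}(\A^2\setminus\{p_i\})\cong\KK^{\MW}_2$ is an epimorphism. By naturality of $h$ and purity, each $\pr_i\circ h$ is then an epimorphism, and the paper concludes from this that $h$ is one. Your wish to hit each summand individually is well founded. A morphism into $(\KK^{\MW}_2)^{\oplus n}$ whose components are all epimorphisms need not itself be an epimorphism; the diagonal $\KK^{\MW}_2\to(\KK^{\MW}_2)^{\oplus 2}$ is an example. So that last inference in the paper also needs an additional argument.

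One way to supply it, close to your abandoned two-set cover, is the following; I have not checked it in full detail.
\begin{itemize}
\item Since $k$ is infinite, an automorphism of $\A^2$ moves the points to $Z\times\{0\}$ with $Z\subset\A^1(k)$ finite. Take a linear change making the $x$-coordinates distinct, then $(x,y)\mapsto(x,y-P(x))$ with $P$ interpolating the $y$-coordinates.
\item Cover $X_n$ by $\A^1\times\Gm$ and $(\A^1\setminus Z)\times\A^1$, with intersection $(\A^1\setminus Z)\times\Gm$. This exhibits $X_n$ as the join of $\A^1\setminus Z$ and $\Gm$, i.e.\ $X_n\simeq\Sigma\bigl((\A^1\setminus Z)\wedge\Gm\bigr)$.
\item For a suspension $\Sigma\mathcal{Y}$ the Hurewicz map is an epimorphism. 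Indeed, $\underline{H}_1^{\A^1}(\Sigma\mathcal{Y})\cong\widetilde{\underline{H}}_0^{\A^1}(\mathcal{Y})$ is generated by the classes $[y]-[*]$ of local sections, because $\A^1$-localization is an epimorphism on $\underline{H}_0$. Each such class is the Hurewicz image of the suspension of $y$.
\end{itemize}
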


\begin{proof}
We split the argument into three steps.

\underline {Step 1:} Since $p_1$ is rational, we may assume that $p_1=0$ is the origin, after applying a suitable translation. Now we consider the inclusion $j \colon \A^2 \setminus \{ 0,p_2,\dots,p_n \} \hookrightarrow \A^2 \setminus \{ 0 \}$ obtained by removing the remaining closed points $p_2,\dots,p_n$. This open subscheme is $\A^1$-connected, which follows, for instance, from \cite[Remark~2.16]{asok:doran:excision}. Therefore, the excision theorem of Asok and Doran \cite[Theorem~4.1]{asok:doran:excision} for $d=2$ and $m=0$ yields that the map
\[
j_*\colon \underline{\pi}_i^{\A^1}(\A^2 \setminus \{ 0,p_2,\dots,p_n \}) \to \underline{\pi}_i^{\A^1}(\A^2 \setminus \{ 0 \})
\]
is an epimorphism for $i=1$. Here we note that the notation in \cite{asok:doran:excision} differs from ours and their excision theorem is indeed about homotopy sheaves. This implies that the composition 
\[
\underline{\pi}_1^{\A^1}(\A^2 \setminus \{ 0,p_2,\dots,p_n \}) \to \underline{\pi}_1^{\A^1}(\A^2 \setminus \{ 0 \}) \to \underline{H}_1^{\A^1}(\A^2 \setminus \{ 0 \})\cong\K^{\MW}_2
\]
with the isomorphism $h \colon \underline{\pi}_1^{\A^1}(\A^2 \setminus \{ 0 \})\to\underline{H}_1^{\A^1}(\A^2 \setminus \{ 0 \})$ from the end of Example \ref{Homotopy Sheaf Example} is an epimorphism.

\underline {Step 2:} Note that since $\Th(\mathcal{N}_{\{0,p_2,\dotsc,p_n\} /\A^2}) \simeq \bigvee_{i = 1}^n \Th(\mathcal{N}_{\{p_i\} /\A^2})$, the proof of Proposition \ref{Homology of Knot Complement} yields an isomorphism 
$$\underline{H}_1^{\A^1}\!(\A^2 \setminus \{ 0,\dots,p_n \}) \cong \bigoplus_{i = 1}^n \underline{H}_2^{\A^1}\!( \Th(\mathcal{N}_{\{p_i\} /\A^2})) \cong (\underline{H}_2^{\A^1}\!(S^{4,2}))^{\oplus n} \cong (\KK^{\MW}_2)^{\oplus n},$$
one copy of $\KK^{\MW}_2$ for each of the $k$-rational points $p_i$. Now we consider the diagram
\begin{center}
\begin{tikzcd}
\pi_1^{\A^1}\!(\A^2\setminus\{0,p_2,\dots,p_n\}) \arrow[r, twoheadrightarrow, "j_*"] \arrow[d,"h"] & \pi_1^{\A^1}\!(\A^2\setminus\{0\}) \arrow[d,"\cong"] \\
H_1^{\A^1}\!(\A^2\setminus\{0,p_2,\dots,p_n\}) \arrow[r,"j_*"] \arrow[d,"\cong"] & H_1^{\A^1}\!(\A^2\setminus\{0 \} \arrow[d,"\cong"] \\
(\KK^{\MW}_2)^{\oplus n} \arrow[r, twoheadrightarrow, "\pr_1"] & \KK^{\MW}_2
\end{tikzcd}
\end{center}
Here the vertical maps are the Hurewicz maps and the horizontal ones are induced by the inclusion $j \colon \A^2 \setminus \{ 0,p_2,\dots,p_n \} \hookrightarrow \A^2 \setminus \{ 0 \}$, so that this diagram commutes. By step 1, the top horizontal map is an epimorphism and hence also the composition 
\begin{center}
\begin{tikzcd}
\pi_1^{\A^1}\!(\A^2 \setminus \{ 0,p_2,\dots,p_n \}) \arrow[r, "h"] & H_1^{\A^1}\!(\A^2 \setminus \{ 0,p_2,\dots,p_n \}) \arrow[r, "\pr_1"] & \KK^{\MW}_2
\end{tikzcd}
\end{center}
of the Hurewicz map followed by the projection to the $\KK^{\MW}_2$-summand corresponding to the point $0\in \A^2$.

\underline {Step 3:} The arguments in steps 1 and 2 work for any of the points $p_i$, since we assumed all of them to be $k$-rational. This means that the composition of the Hurewicz map
\[
h \colon \underline{\pi}_1^{\A^1}\!(\A^2 \setminus \{ p_1,\dots,p_n \}) \to \underline{H}_1^{\A^1}\!(\A^2 \setminus \{ p_1,\dots,p_n \}) \cong (\KK^{\MW}_2)^{\oplus n}
\]
with any of the $n$ projections $\pr_i\colon(\KK^{\MW}_2)^{\oplus n}\to\KK^{\MW}_2$ is an epimorphism. But this means that the Hurewicz map is itself an epimorphism, as claimed.
\end{proof}

\begin{rem}
By analogy to the topological version, one would expect the $\A^1$-fundamental sheaf of $\A^2 \setminus \{ p_1,\dots,p_n \}$ to be a free product of $n$ copies of $\KK^{\MW}_2$. A formula of Asok--Morel \cite{asok:morel} says something in this direction, showing an equivalence $\A^2 \setminus \{ p_1,\dots,p_m \} \simeq \Sigma \! \Gm \wedge \A^1 \setminus \{ p_1,\dots,p_m \}$ of motivic spaces. This yields
\[
\underline{\pi}_1^{\A^1}\!(\A^2\setminus\{p_1,\dots,p_m\})\cong \underline{H}_0^{\A^1}\!(\Gm \wedge \A^1 \setminus\{ p_1,\dots,p_m \}),
\]
which still seems some way from a free product formula, and also doesn't seem to immediately imply that the Hurewicz map is an epimorphism. 
\end{rem}

Now that we established that the Hurewicz map for $\A^2 \setminus \{ p_1,\dots,p_m \}$ is an epimorphism, we need some statements about $\A^1$-homology. Essentially, given an algebraic knot $i \colon \A^1 \hookrightarrow \A^3$ with complement $X = \A^3 \setminus i(\A^1)$ and a hyperplane $\A^2 \cong H \subset \A^3$, we consider the composition 
\[
\A^2\setminus\{p_1,\dots,p_n\}\cong H\setminus(H\cap i(\A^1))\subset X\xrightarrow{(f,g)}\A^2\setminus\{0\}
\]
where the map $(f,g)$ is defined as in Construction \ref{Construction of Milnor--Witt K_2 coverings}. As above, the induced map on $\A^1$-homology is of the form 
$$(\K^{\MW}_2)^{\oplus n} \cong \underline{H}_1^{\A^1} \! (\A^2 \setminus \{ p_1,\dots,p_n \}) \to \underline{H}_1^{\A^1} \! (\A^2 \setminus \{ 0 \}) \cong \KK^{\MW}_2,$$
and is given by the sum of the local $\A^1$-degrees of the above composite map $\A^2\setminus\{p_1,\dots,p_n\}\to\A^2\setminus\{0\}$ at the points $p_i$. Indeed, the $i$-th summand is a morphism
$$\KK^{\MW}_2 \cong \underline{H}_2^{\A^1}\!(S^{4,2}) \cong \underline{H}_2^{\A^1}\!(\Th(\mathcal{N}_{\{p_i \}/\A^2})) \to \underline{H}_2^{\A^1}\!(\Th(\mathcal{N}_{\{0 \}/\A^2})) \cong \underline{H}_2^{\A^1}\!(S^{4,2}) \cong \KK^{\MW}_2.$$
But all such morphisms are known to be given by multiplication with an element of $\K^{\MW}_0(k) = \GW(k)$, as can be found in \cite[Corollary 5.2]{MR4874195}. Via homotopy purity, this element is by definition the local $\A^1$-degree of $\A^2\setminus\{p_1,\dots,p_n\}\to\A^2\setminus\{0\}$ at the point $p_i$.
\begin{prop}
\label{prop:a1-local-degree}
In the above situation, the local $\A^1$-degree of the composite map 
\[
\A^2\setminus\{p_1,\dots,p_n\}\cong H\setminus(H\cap i(\A^1))\subset X\xrightarrow{(f,g)}\A^2\setminus\{0\}. 
\]
at any of the points $p_i$ is invertible. 
\end{prop}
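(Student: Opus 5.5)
The plan is to show that, near each $p_i$, the composite $H \to \A^2$ given by $(f,g)|_H$ is étale at $p_i$. Once that is known, the local $\A^1$-degree at $p_i$ is the class $\langle \det J \rangle \in \GW(k)$ of the Jacobian determinant at the rational point $p_i$. This class is a unit in $\GW(k) = \K^{\MW}_0(k)$, since $\langle a\rangle\langle a\rangle = \langle a^2\rangle = 1$.

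\emph{Step one: identify the local degree.} The local degree is computed, via homotopy purity, as the map on Thom spaces
$$\Th(\mathcal{N}_{\{p_i\}/H}) \to \Th(\mathcal{N}_{\{0\}/\A^2}).$$
This map is induced by the restriction of $(f,g)$ to a Nisnevich neighbourhood of $p_i$ in $H$. Using the naturality of homotopy purity (as already used in the proof of Proposition~\ref{knot complements connected}), the map of Thom spaces is identified with the one induced by the differential $d_{p_i}(f,g)|_H \colon T_{p_i}H \to T_0\A^2$. It is a map between trivial rank-$2$ bundles over a point, provided that the map is étale at $p_i$, i.e.\ that $(f,g)|_H$ is a local isomorphism in the Nisnevich sense there. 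A linear isomorphism $\A^2\to\A^2$ with matrix $A$ induces on $S^{4,2}$, and hence on $\underline{H}_2^{\A^1}(S^{4,2})\cong\KK^{\MW}_2$, multiplication by $\langle\det A\rangle$. This is the standard computation of the degree of $\A^2/(\A^2\setminus 0)$ self-maps, which one can reduce via $\GL_2 = \SL_2\rtimes\Gm$ to the diagonal case $\mathrm{diag}(a,1)$, acting on $\Gm\wedge\Gm$ by $[x]\mapsto[ax]$ in the first factor.

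\emph{Step two: the étaleness, which is the main point.} Since $i(\A^1) = V(f,g)$ scheme-theoretically, and $i(\A^1)$ is smooth of codimension $2$, the differentials $df, dg$ are linearly independent at every point of $i(\A^1)$. They span the conormal space $N^\vee_{p_i}$, whose annihilator is $T_{p_i}i(\A^1)$. The restricted differential $d(f,g)|_{T_{p_i}H}$ is therefore an isomorphism if and only if $T_{p_i}H \cap T_{p_i}i(\A^1) = 0$. Equivalently, $H$ meets the curve transversally at $p_i$. This is exactly the simplicity hypothesis on the intersection points, which holds in the Chebyshev case by Corollary~\ref{cor:chebyshev-simple}, and can be arranged by a generic hyperplane when $k=k^{\alg}$. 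Simplicity of the root of $\ell\circ i$, where $\ell$ is the linear form defining $H$, means $d\ell$ does not vanish on $T_{p_i}i(\A^1)$. With $p_i$ rational, the map is étale at $p_i$ with trivial residue extension, so a Zariski neighbourhood of $p_i$ maps by an étale map sending only $p_i$ to $0$ after shrinking. Excision then lets us replace the map by its linearization.

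The main obstacle is making the reduction in step one rigorous: justifying that the induced map on Thom spaces depends only on the differential. I would do this via the deformation to the normal cone, as in the proof of homotopy purity. The étale map $U\to\A^2$ with $U\cap(f,g)^{-1}(0)=\{p_i\}$ induces a compatible map of deformation spaces, whose fiber over $0$ is the linear map $d(f,g)|_H$ on normal bundles. This is precisely the naturality statement already cited (Theorem 3.23 of \cite{MR3570135}). After that, invertibility of $\langle \det J\rangle$ in $\GW(k)$ finishes the proof.
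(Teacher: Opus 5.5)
Your proof is correct, but it takes a different route from the paper's.

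\textbf{The paper's route.} The paper invokes the theorem of Kass and Wickelgren identifying the local $\A^1$-degree with the Eisenbud--Khimshiashvili--Levine class. It notes that $f|_H$ and $g|_H$ generate $\mathfrak{m}_{p}$, so the local algebra is $Q\cong k$. It then asserts, via a brief ``adapted coordinates'' remark, that the Jacobian is invertible. Finally it reads off the EKL form as $\langle \det J^{-1}\rangle$.

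\textbf{Your route.} You compute the purity-defined degree directly in the \'etale case. First you Zariski-shrink so that $p_i$ is the scheme-theoretic fibre over $0$, which is exactly what naturality of homotopy purity requires. Naturality (deformation to the normal cone) then replaces the map by its differential. Morel's computation for linear maps gives $\langle\det J\rangle$.

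\textbf{What each buys.} Your argument needs only the elementary simple-zero case, not the full EKL theorem. It also computes exactly the map
\[
\underline{H}_2^{\A^1}\!\bigl(\Th(\mathcal{N}_{\{p_i\}/\A^2})\bigr)\to \underline{H}_2^{\A^1}\!\bigl(\Th(\mathcal{N}_{\{0\}/\A^2})\bigr)
\]
that enters the proof of Theorem~\ref{thm:hurewicz-knot-complement}. So you need no comparison between the Kass--Wickelgren degree, which lives in $[(\mathbb{P}^1)^{\wedge 2},(\mathbb{P}^1)^{\wedge 2}]$, and the purity-defined one.

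Your \'etaleness step is also a cleaner justification of Jacobian invertibility than the paper gives. You observe that $df$ and $dg$ span the conormal space, so $d(f,g)|_{T_{p_i}H}$ is an isomorphism if and only if $T_{p_i}H\cap T_{p_i}i(\A^1)=0$, that is, if and only if the zero of $\ell\circ i$ is simple.

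The paper's route is shorter, given the black box. The EKL formalism also covers non-reduced isolated zeros. There, however, the degree has rank $\dim_k Q>1$ and so is never a unit in $\GW(k)$. Hence transversality is the essential input in both arguments.
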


\begin{proof}
By a theorem of Kass and Wickelgren \cite{kass:wickelgren:local-degree}, the local $\A^1$-degree as an element in the abelian group $\GW(k) \cong [(\mathbb{P}^1)^{\wedge 2},(\mathbb{P}^1)^{\wedge 2}]$ can be computed as the Eisenbud--Khimshiashvili--Levine class. 

We collect some algebraic information to compute that class. A key point is that according to Proposition \ref{Knots are Complete Intersections}, the knot $i(\A^1)$ is a complete intersection, defined by the functions we denoted $(f,g)$ throughout. We also have a coordinate function $\varphi \colon \A^3 \to \A^1$, whose composition with the embedding $i\colon\A^1\to\A^3$ is the identity, showing that $i$ is an embedding. Consequently, at each point $p$ of the knot, $f$ and $g$ generate the conormal bundle of the knot, and $\varphi$ generates its tangent bundle. If we consider a hyperplane $\A^2 \cong H \subset \A^3$ such that all intersection points are simple and $k$-rational, then the restrictions of $f$ and $g$ to the hyperplane $\A^2 \cong H$ generate the tangent space to $H$ at any of the intersection points $p_i$. 

Now we can compute the EKL-form. At any of the ($k$-rational) intersection points $p$, the restrictions of $f$ and $g$ generate the maximal ideal of the point $\mathfrak{m}_p$. Hence the local $k$-algebra $Q=k[X,Y]_p/(f,g)$ is isomorphic to $k$. Now we claim that the Jacobian $J$ of the composite $\A^2\cong H\hookrightarrow \A^3\to\A^2$ of $(f,g)$ and the inclusion is invertible at $p$. One way to see this is to use adapted coordinates. By definition, $V(f)$ and $V(g)$ map to the coordinate axes of $\A^2$ under the above composition and we can therefore choose the tangents to $V(f)$ and $V(g)$ as basis vector for the tangent space. With this basis, the Jacobian is the identity. Thus, for any other choice of basis, it is invertible. The EKL-form is now the quadratic form 
\[
\omega^{\rm EKL}\colon Q\times Q\to k\colon(v,w)\mapsto \frac{vw}{\det(J)}
\]
As an element in $\GW(k)$, we can write it as $\langle \det J^{-1}\rangle$ and see that it is invertible as claimed.
\end{proof}

\begin{thm}
\label{thm:hurewicz-knot-complement}
    Let $k$ be an infinite base field of characteristic not $2$ and let $i\colon \A^1\hookrightarrow\A^3$ be an algebraic knot with complement $X = \A^3 \setminus i(\A^1)$. Suppose there exists a hyperplane $\A^2  \cong H \subset \A^3$ that intersects $i(\A^1)$ transversely and all intersection points in $H\cap i(\A^1)$ are $k$-rational. Then the Hurewicz map $h\colon\underline{\pi}_1^{\A^1}\!(X)\to\underline{H}_1^{\A^1}\!(X)$ is an epimorphism. 
\end{thm}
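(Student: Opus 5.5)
The plan is to factor a surjection through the plane complement. Let $H\cap i(\A^1)=\{p_1,\dots,p_n\}$, all $k$-rational and transverse by assumption. Write $U=H\setminus(H\cap i(\A^1))\cong\A^2\setminus\{p_1,\dots,p_n\}$ and $\iota\colon U\hookrightarrow X$ for the inclusion. Functoriality of the Hurewicz map gives a commutative square
\[
\begin{tikzcd}
\underline{\pi}_1^{\A^1}\!(U) \arrow[r,"\iota_*"] \arrow[d,"h_U"] & \underline{\pi}_1^{\A^1}\!(X) \arrow[d,"h_X"] \\
\underline{H}_1^{\A^1}\!(U) \arrow[r,"\iota_*"] & \underline{H}_1^{\A^1}\!(X).
\end{tikzcd}
\]
It therefore suffices to show that $h_X\circ\iota_*=\iota_*\circ h_U$ is an epimorphism. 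By Proposition \ref{prop:hurewicz-plane-minus-pts}, $h_U$ is an epimorphism; this is where infiniteness of $k$ and rationality of the $p_i$ enter. The problem thus reduces to showing that $\iota_*\colon\underline{H}_1^{\A^1}\!(U)\to\underline{H}_1^{\A^1}\!(X)$ is an epimorphism.

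To control this map on homology, I would compose with $\varphi=(f,g)\colon X\to\A^2\setminus\{0\}$. In the proof of Proposition \ref{knot complements connected} we showed that $\varphi_*\colon\underline{H}_1^{\A^1}\!(X)\to\underline{H}_1^{\A^1}\!(\A^2\setminus\{0\})\cong\KK^{\MW}_2$ is an isomorphism, so it is enough to prove that $(\varphi\circ\iota)_*\colon(\KK^{\MW}_2)^{\oplus n}\to\KK^{\MW}_2$ is an epimorphism. Here the identification of the source comes from Step 2 of the proof of Proposition \ref{prop:hurewicz-plane-minus-pts}. By the discussion preceding Proposition \ref{prop:a1-local-degree}, the restriction of this map to the $i$-th summand is multiplication by the local $\A^1$-degree $\deg_{p_i}(\varphi\circ\iota)\in\K^{\MW}_0(k)=\GW(k)$. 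That identification uses naturality of homotopy purity for the pullback of $V(f,g)$ to $H$ via the Thom-space comparison, together with Corollary 5.2 of \cite{MR4874195}. By Proposition \ref{prop:a1-local-degree}, each of these local degrees is a unit, which is where transversality is used. Multiplication by a unit of $K^{\MW}_0(k)$ is an automorphism of $\KK^{\MW}_2$, so already the first summand surjects. Hence $(\varphi\circ\iota)_*$ is an epimorphism, so is $\iota_*$, and chasing the square gives that $h_X$ is an epimorphism.

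The main obstacle is the justification that the map on $\underline{H}_1^{\A^1}$ decomposes summandwise into the local degrees. To do this, I would map the cofiber sequence $U\to H\to H/U$ to $X\to\A^3\to\A^3/X$ and then to $\A^2\setminus\{0\}\to\A^2\to\A^2/(\A^2\setminus\{0\})$, identify each cofiber via purity with Thom spaces, and use that the boundary maps $\underline{H}_2^{\A^1}(\text{cofiber})\to\underline{H}_1^{\A^1}$ are isomorphisms because affine spaces are contractible. On the source, $\Th(\mathcal{N}_{\{p_1,\dots,p_n\}/H})\simeq\bigvee_i\Th(\mathcal{N}_{\{p_i\}/H})$, so the induced map splits as a sum over the $p_i$. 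The local pieces are then exactly the maps that define the local degree. The transversality hypothesis is needed precisely so that $f|_H,g|_H$ cut out each $p_i$ reducedly, making the composite a map of pairs $(H,U)\to(\A^2,\A^2\setminus\{0\})$ with a well-defined local degree.
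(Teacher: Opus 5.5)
Your proposal is correct and follows essentially the same route as the paper: restrict to the hyperplane complement $U\cong\A^2\setminus\{p_1,\dots,p_n\}$, use Proposition \ref{prop:hurewicz-plane-minus-pts} for $h_U$ and the isomorphism $\varphi_*$ on $\underline{H}_1^{\A^1}$, and describe $(\varphi\circ\iota)_*$ summandwise by local $\A^1$-degrees, which are units by Proposition \ref{prop:a1-local-degree}. Your cofiber-sequence justification of that description spells out what the paper sketches before Proposition \ref{prop:a1-local-degree}; one small correction is that transversality is what makes the local degrees units, while their well-definedness only needs the $p_i$ to be isolated zeros of $(f|_H,g|_H)$.
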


\begin{proof}
By assumption, $\A^2\setminus\{p_1,\dots,p_n\} \cong H \setminus (H\cap i(\A^1))= H \cap X$ embeds into $X$. Then we consider the composition
\[
\A^2\setminus\{p_1,\dots,p_n\}\hookrightarrow X \xrightarrow{(f,g)} \A^2\setminus\{0\},
\]
where $(f,g)$ is defined as in Construction \ref{Construction of Milnor--Witt K_2 coverings}. This gives rise to the following commutative diagram:
\begin{center}
\begin{tikzcd}
\underline{\pi}_1^{\A^1}\!(\A^2\setminus\{p_1,\dots,p_n\}) \arrow[r] \arrow[d, "h"] & \underline{\pi}_1^{\A^1}\!(X) \arrow[d,"h"] \arrow[r] & \underline{\pi}_1^{\A^1}\!(\A^2\setminus\{0\}) \ar[d,"\cong"]\\
\underline{H}_1^{\A^1}\!(\A^2\setminus\{p_1,\dots,p_n\}) \arrow[r] & \underline{H}_1^{\A^1}\!(X) \arrow[r,"\cong"] & \KK^{\MW}_2
\end{tikzcd}
\end{center}
The upper row of the diagram is obtained by applying $\underline{\pi}_1^{\A^1}$ to the composition above and the lower one by applying $\underline{H}_1^{\A^1}$. All the vertical arrows are Hurewicz maps, and the commutativity follows from the naturality of Hurewicz maps. To show that the middle vertical map is an epimorphism, it suffices to show that the composite map across the whole rectangle is an epimorphism. Indeed, then the bottom composite of the left square is an epimorphism since the bottom horizontal map in the right square is an isomorphism, and by commutativity this implies that $h\colon\underline{\pi}_1^{\A^1}(X)\to\underline{H}_1^{\A^1}(X)$ is an epimorphism.

For showing that the composite map across the whole rectangle is an epimorphism, we argue as follows. The composition 
\[
\underline{H}_1^{\A^1}\!(\A^2\setminus\{p_1,\dots,p_n\})\to \underline{H}_1^{\A^1}\!(X)\to \underline{H}_1^{\A^1}\!(\A^2\setminus\{0\})\cong\KK^{\MW}_2
\]
in the bottom row of the above diagram is the sum of the maps
\begin{center}
\begin{tikzcd}[row sep = large, column sep = 12ex]
\underline{H}_1^{\A^1}\!(\A^2\setminus\{p_1,\dots,p_n\}) \cong (\KK^{\MW}_2)^{\oplus n} \arrow[r,"\pr_i"] & \KK^{\MW}_2 \arrow[r, "\deg^{\A^1}\!((f\text{,}\hspace{1pt}g)_i)"] & \underline{H}_1^{\A^1}\!(\A^2\setminus\{0\}) \cong \KK^{\MW}_2
\end{tikzcd}
\end{center}
of the projection to the $i$-th summand with multiplication by the $\A^1$-local degree of the composition
\begin{center}
\begin{tikzcd}
\A^2\setminus\{p_1,\dots,p_n\} \arrow[r] & X \arrow[r, "(f\text{,}\hspace{1pt}g)"] & \A^2 \setminus \{ 0 \}
\end{tikzcd}
\end{center}
at the point $p_i$. By Proposition~\ref{prop:a1-local-degree}, these degrees are invertible and hence in particular epimorphisms. By Proposition~\ref{prop:hurewicz-plane-minus-pts}, the left vertical Hurewicz map is an epimorphism, so that the considered map is a composite of two epimorphisms. This shows the claim.
\end{proof}

\begin{rem}\label{rem:assumptions for hurewicz}
The hyperplane condition of Proposition~\ref{thm:hurewicz-knot-complement} is satisfied  for any algebraic knot over $k=k^{\alg}$, and it is satisfied for Chebyshev knots over $\mathbb{R}$, by Corollary \ref{cor:chebyshev-simple}. In fact, for each Chebyshev knot there exists a number field over which the condition is satisfied.
\end{rem}
\begin{rem}
The basic idea of the above proof is inspired from topology where we can simply draw a simple loop around the knot which provides a section of the Hurewicz map $\pi_1(X)\to H_1(X)\cong\Z$. In the algebraic setting, this would require a map $\A^2\setminus\{0\}\to X$ whose composition with $(f,g)$ is (homotopic to) the identity, but we have no idea how to find such a map. The next best idea is to consider a combination of several loops in the above hyperplane section. The topological picture would also suggest that the map $\underline{\pi}_1^{\A^1}\!(\A^2\setminus\{p_1,\dots,p_n\})\to \underline{\pi}_1^{\A^1}\!(X)$ induced by the hyperplane section (in a sort of Wirtinger-style presentation) is an epimorphism. Again, we have no idea how one could prove this; it is also not entirely clear if a description of $\underline{\pi}_1^{\A^1}\!(X)$ of this form should be expected.
\end{rem}

\bibliographystyle{siam}
\bibliography{references}

\end{document}